\theoremstyle{plain}
\newtheorem{theorem}{Theorem}
\newtheorem{lemma}[theorem]{Lemma}
\newtheorem{proposition}[theorem]{Proposition}
\theoremstyle{definition}
\newtheorem{example}[theorem]{Example}
\theoremstyle{remark}
\newtheorem{remark}[theorem]{Remark}
\title{\bf Uniform hypergraphs and dominating sets of graphs}
\author{Jaume Mart\'{\i}-Farr\'e\thanks{Partially supported by projects MINECO MTM2014-60127-P and Gen.Cat. DGR2014SGR1147.}
\qquad Merc\`e Mora \thanks{Partially supported by projects MINECO MTM2015-63791-R and Gen.Cat. DGR2014SGR46.}
\qquad Jos\'e Luis Ruiz\\
\small Departament de Matem\`atiques\\[-0.8ex]
\small Universitat Polit\`ecnica de Catalunya\\[-0.8ex]
\small Spain\\
\small\tt \{jaume.marti,merce.mora,jose.luis.ruiz\}@upc.edu}
\begin{document}


\maketitle

\begin{abstract}
A (simple) hypergraph is a family ${\cal H}$ of pairwise incomparable sets of a finite set $\Omega$. We say that a hypergraph ${\cal H}$ 
is a \emph{domination hypergraph} 
if there is at least a graph $G$ such that the collection of minimal dominating sets of $G$ is equal to ${\cal H}$. 
Given a hypergraph, we are interested in determining if it is a domination hypergraph and, 
if this is not the case, we want to find domination hypergraphs in some sense close to it; the {\em domination completions}. 
Here we will focus on the family of hypergraphs containing all the subsets with the same cardinality; 
the \emph{uniform hypergraphs} of maximum size. 
Specifically, we characterize those hypergraphs ${\cal H}$ in this family that are domination hypergraphs and, 
in any other case, we prove that the domination completions exist. Moreover,
we then demonstrate that  
the hypergraph ${\cal H}$ is uniquely determined by some of its domination completions, 
in the sense that ${\cal H}$ can be recovered from its minimal domination completions by using a suitable hypergraph operation.

\bigskip\noindent \textbf{Keywords:} graphs; hypergraphs; uniform hypergraphs; dominating sets.
\end{abstract}


\def\DD{\mathcal{D}}
\def\Dom{\mathop{\mathrm{Dom}}}
\def\DDom{\mathcal{D}om}


\section{Introduction}


A \emph{vertex dominating set} of a graph $G$
is a set of vertices $D$ such that every vertex of $G$ is either in $D$ or adjacent to some vertex of $D$ (see~\cite{HHS98}).
Domination in graphs is a widely researched branch of graph theory,
both from a theoretical and algorithmic point of view.
In part, it is due to its applications to several fields where graphs are used to model the relationships between a finite number of objects.
In this way, for instance,
some concepts from domination in graphs appear in problems involving finding sets of representatives,
as well as in facility location problems or in problems in monitoring communication, in electrical networks or in network routing.

The starting point of this work is a question concerning the design of networks on a finite set of nodes $\Omega$
whose dominating sets satisfy specific properties.
Thus, in this paper we focus our attention on the collection  $\DD(G)$
of all the \emph{inclusionwise minimal vertex dominating sets} of a graph $G$.
Specifically, we are looking for graphs $G$ whose collection of vertex dominating sets  $\DD(G)$
is equal or close
to a given collection $\{A_1, \dots, A_r\}$ of subsets of nodes $A_i\subseteq \Omega$. 
This problem is related to the closed neighborhood realization problem (see~\cite{BLS99}),
that was first proposed by S\'os under the name of star system problem (see~\cite{S78}),
and has been studied by different authors (see~\cite{AT93,BGZ08,FKLMT11,KLMN14}).

\emph{Hypergraphs} become the natural framework of this problem.
A (simple) \emph{hypergraph} ${\cal H}$ on a finite set
$\Omega$ is a collection of subsets of $\Omega$ none of which is a proper subset of another
(see~\cite{B89}).
The \emph{domination hypergraph of a graph} $G$
is the  collection $\DD(G)$
of all the inclusion-minimal vertex dominating sets of a graph $G$.
A hypergraph ${\cal H}$ is said to be a \emph{domination hypergraph} if ${\cal H}$ is the domination hypergraph of a graph;
that is, if ${\cal H}=\DD(G)$ for some graph $G$.

Since in general a hypergraph ${\cal H}$ is not the domination hypergraph of a graph,
a natural question that arises at this point is to determine domination hypergraphs close to ${\cal H}$, 
the \emph{domination completions} of ${\cal H}$. 
This paper deals with this issue.
Specifically, we focus our attention on this problem for the uniform hypergraphs 
${\cal H}={\cal U}_{r,\Omega}$ containing all the subsets with the same cardinality $r$ of a finite set $\Omega$.
The goal is to prove that the domination completions of the uniform hypergraph  ${\cal U}_{r,\Omega}$ exist. Moreover,
by taking into account a suitable partial order $\leqslant$, we will prove that the set of domination completions of ${\cal U}_{r,\Omega}$
is a partially ordered set, and that the uniform hypergraph ${\cal U}_{r,\Omega}$ is univocally determined by the minimal elements of this poset,
the \emph{minimal domination completions}. Namely
we will prove that there is a hypergraph operation $\sqcap$ that allows us to express the uniform hypergraph ${\cal U}_{r,\Omega}$ 
as a combination of its optimal completions
$\DD(G_1), \dots, \DD(G_s)$, that is, ${\cal U}_{r,\Omega}=\DD(G_1) \sqcap \cdots \sqcap \DD(G_s)$. 
We thus speak of a \emph{decomposition} of ${\cal U}_{r,\Omega}$.
In addition, we study the number of
completions appearing in the decomposition of ${\cal U}_{r,\Omega}$.

Summarizing, in this paper we present new results concerning with the completions and decompositions of hypergraphs 
into \emph{domination hypergraphs} (a previous version of this work was presented at the European Conference on Combinatorics, 
Graph Theory and Applications - EUROCOMB 2015~\cite{JMJL201501}).
Closest in spirit to our work is the paper~\cite{MdM2015}, in which the authors present 
some results on the completion and decompositions of hypergraphs into \emph{matroidal hypergraphs}. 
It is worth mentioning that even though our results are formally analogous to those in~\cite{MdM2015}, their proofs are quite different.

The paper is structured as follows. First, in Section~\ref{sec general} 
we recall the properties of vertex dominating sets of graphs that we will use throughout the paper (Subsection~\ref{domgrafs});
we present the basic definitions on hypergraphs and domination hypergraphs (Subsection~\ref{domhypgrafs});
and we characterize the uniform hypergraphs of maximum size that are domination hypergraphs (Subsection~\ref{subuniform}).
The main theoretical results of this paper are gathered in Section~\ref{compdecomp}. In this section we
introduce the poset of domination completions (Subsection~\ref{subcomp}),
and we present our results on domination completion and decomposition of the uniform hypergraphs
${\cal U}_{r,\Omega}$ (Theorem~\ref{thcompletacio}, Theorem~\ref{thdecom} and Proposition~\ref{recup2}). 
Finally, in Section~\ref{mincomp}, we describe the set of the minimal domination completions of some uniform hypergraphs ${\cal U}_{r,\Omega}$;
we present their graph realization; and, we discuss some issues on the corresponding domination decomposition.
Concretely, we analyze these questions for the uniform hypergraphs ${\cal U}_{r,\Omega}$ when $r=2$ (Subsection~\ref{section2n}), when $r=|\Omega|-1$
(Subsection~\ref{sectionn-1n}), and when $r$ arbitrary and $|\Omega|\leq 5$ (Subsection~\ref{section135}).


\section{Dominating sets of graphs. Domination hypergraphs}\label{sec general}


As mentioned in the introduction, the aim of this section is to present those general results on dominating 
sets that we will use throughout the paper. 


\subsection{Vertex dominating sets of graphs}\label{domgrafs}


A \emph{graph} $G$ is an ordered pair $(V(G),E(G))$ comprising a finite set $V(G)$
of \emph{vertices} together with a (possible empty) set $E(G)$ of \emph{edges} which
are two-element subsets of $V(G)$ (for general references on graph theory see~\cite{CL05, W01}). 
If $e=\{x,y\}\in E(G)$ is an edge of $G$, then $x$ and $y$ are 
said to be \emph{adjacent vertices}.
An \emph{isolated vertex} is a vertex of the graph that is not adjacent to any other vertices; 
that is, a vertex that does not belong to any edge of the graph.
Let us denote by $V_0(G)$ the set of all the isolated vertices of $G$.

A \emph{dominating set} for a graph $G=(V(G),E(G))$ is a subset $D$ of $V(G)$ such that every vertex not in $D$
is adjacent to at least one member of $D$.  Since any superset of a dominating set of $G$ 
is also a dominating set of $G$, the collection $D(G)$
of the dominating sets of a graph $G$ is a monotone increasing family of subsets of
$V(G)$. Therefore, $D(G)$ is uniquely determined by the family $\min\big(D(G)\big)$ of its inclusion-minimal
elements. Let us denote by $\DD(G)$ the family of the \emph{inclusion-minimal dominating sets} of the graph $G$.

Dominating sets of a graph are closely related to independent sets.
An \emph{independent set} of a graph $G$ is a set of vertices such that no two of them are adjacent.
It is clear that an independent set is also a dominating set if and only if it is an inclusion-maximal independent set  (see~\cite{CL05}).
Therefore, any inclusion-maximal independent set of a graph is necessarily also an inclusion-minimal dominating set.
The next lemma follows from this fact and from the definitions.

\begin{lemma}\label{lema.unio}
Let $G$ be a graph.
Then, $V(G) = \bigcup_{D\in \DD(G)} D$ and $V_0(G) =\bigcap_{D\in \DD(G)} D $.
\end{lemma}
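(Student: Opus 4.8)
The plan is to prove the two set equalities separately, in each case by mutual inclusion, relying only on the definitions of dominating set and isolated vertex together with the remark (recalled just above the statement) that every inclusion-maximal independent set is an inclusion-minimal dominating set.

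\emph{The equality $V(G)=\bigcup_{D\in\DD(G)}D$.} The inclusion $\bigcup_{D\in\DD(G)}D\subseteq V(G)$ is immediate since each $D$ is a subset of $V(G)$. For the reverse inclusion, fix a vertex $v\in V(G)$; I must produce some $D\in\DD(G)$ with $v\in D$. The idea is to start from the one-element independent set $\{v\}$ and extend it greedily to an inclusion-maximal independent set $I$ of $G$ containing $v$ (such an $I$ exists because $V(G)$ is finite and the family of independent sets containing $v$ is nonempty and closed under taking a maximal element). By the remark above, $I$ is an inclusion-minimal dominating set, i.e. $I\in\DD(G)$, and $v\in I$, which gives $v\in\bigcup_{D\in\DD(G)}D$. (One should note that $\DD(G)$ is nonempty here, which is already guaranteed since at least one maximal independent set exists; the only degenerate case, $V(G)=\emptyset$, makes both sides empty.)

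\emph{The equality $V_0(G)=\bigcap_{D\in\DD(G)}D$.} For "$\subseteq$": let $v$ be an isolated vertex and let $D\in\DD(G)$ be arbitrary. If $v\notin D$, then by the definition of dominating set $v$ must be adjacent to some member of $D$, contradicting that $v$ is isolated; hence $v\in D$, and since $D$ was arbitrary, $v\in\bigcap_{D\in\DD(G)}D$. For "$\supseteq$": suppose $v$ is not isolated, so $v$ is adjacent to some vertex $w$. I claim $v$ is missing from some minimal dominating set, which I obtain by the greedy construction applied to $w$: extend $\{w\}$ to an inclusion-maximal independent set $I$ containing $w$; since $v$ and $w$ are adjacent, independence forces $v\notin I$, and again $I\in\DD(G)$ by the remark. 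Thus $v\notin\bigcap_{D\in\DD(G)}D$, proving the contrapositive.

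I do not anticipate a genuine obstacle here; the one point requiring a little care is making sure the greedy extension to a \emph{maximal} independent set containing a prescribed vertex is legitimate and that the resulting set genuinely lies in $\DD(G)$ — this is exactly where the previously stated fact "maximal independent $\Rightarrow$ minimal dominating" is used, and it is the load-bearing step of both inclusions. Everything else is a direct unwinding of definitions.
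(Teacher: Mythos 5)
Your proof is correct and follows exactly the route the paper intends: the paper gives no detailed argument, merely noting that the lemma follows from the fact that maximal independent sets are minimal dominating sets together with the definitions, and your write-up is a faithful elaboration of precisely that argument (extend a vertex, or a neighbor of a non-isolated vertex, to a maximal independent set, and observe isolated vertices lie in every dominating set).
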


Next, in Lemma~\ref{entorn.prop}, we recall the well-known relation between dominating sets and \emph{star systems}
(see~\cite{BGZ08,BLS99,KLMN14}).

The \emph{star system of a graph} $G=(V(G),E(G))$ is the multiset $N[G]$  of \emph{closed neighborhoods} of all the vertices of the graph;
that is, the multiset $N[G] = \{N[x] \,:\, x\in V(G)\}$
where $N[x]=\{x\}\cup \{y\in V(G) \,:\, \{x,y\}\in E(G)\}$. 
Let us denote by $\mathcal{N}[G]$ the inclusion-minimal elements of the star system; that is, 
$\mathcal{N}[G]=\min \big ( N [G] \big )$
is the family of the \emph{inclusion-minimal closed neighborhoods of the graph} $G$.
The relation between $\DD(G)$ and $\mathcal{N}[G]$ involves the \emph{transversal} or
\emph{blocker} of a family of subsets. Let ${\cal A}$ be a collection of subsets none of which is a proper subset of another.
The \emph{transversal} $tr({\cal A})$ of the family ${\cal A}$ consists of those inclusion-minimal subsets that have non-empty intersection with every
member of ${\cal A}$; that is, $tr({\cal A})=\min \{X \, : \, X\cap A\neq \emptyset$ for all $A\in {\cal A} \}$.

\begin{lemma}\label{entorn.prop}
Let $G$ be a graph. Then,  $\DD(G)=tr \big (\mathcal{N}[G]\big )$ and  $\mathcal{N}[G]=tr \big (\DD(G)\big )$.
\end{lemma}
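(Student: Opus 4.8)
The plan is to prove the first identity $\DD(G)=tr(\mathcal{N}[G])$ directly from the definition of a dominating set, and then to obtain the second identity, $\mathcal{N}[G]=tr(\DD(G))$, for free from the first by using that the transversal operation is an involution on simple hypergraphs.

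First I would record the elementary characterization: a set $D\subseteq V(G)$ is a dominating set of $G$ if and only if $D\cap N[x]\neq\emptyset$ for every $x\in V(G)$. Indeed, $x\in D$ or $x$ is adjacent to a vertex of $D$ precisely when $D$ meets $N[x]=\{x\}\cup\{y:\{x,y\}\in E(G)\}$. Since every closed neighborhood $N[x]$ contains some inclusion-minimal closed neighborhood, a set meets all members of $N[G]$ if and only if it meets all members of $\mathcal{N}[G]=\min(N[G])$. Hence the monotone family $D(G)$ of dominating sets of $G$ equals $\{X\subseteq V(G): X\cap A\neq\emptyset \text{ for every } A\in\mathcal{N}[G]\}$, and passing to inclusion-minimal elements on both sides gives $\DD(G)=\min(D(G))=tr(\mathcal{N}[G])$.

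For the second identity, I would invoke the classical duality $tr(tr({\cal A}))={\cal A}$, valid for every simple hypergraph (clutter) ${\cal A}$. Since $\mathcal{N}[G]=\min(N[G])$ is by construction an antichain of nonempty sets, it is a clutter, so applying the duality together with the first identity yields $tr(\DD(G))=tr\big(tr(\mathcal{N}[G])\big)=\mathcal{N}[G]$.

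The only non-routine ingredient is the involution property $tr(tr({\cal A}))={\cal A}$, which I would either cite or prove in a few lines via the complementation observation that a set $S$ meets every member of ${\cal A}$ if and only if $V(G)\setminus S$ contains no member of ${\cal A}$. From this, if $A'\in tr(tr({\cal A}))$, then $V(G)\setminus A'$ cannot contain a transversal of ${\cal A}$, which forces $A'$ to contain some $A\in{\cal A}$; and since every member of ${\cal A}$ also meets every transversal of ${\cal A}$, minimality of $A'$ together with the antichain property of ${\cal A}$ forces $A'=A\in{\cal A}$, and the reverse inclusion follows in the same way. I expect this duality step to be the main (though still easy) obstacle. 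The degenerate case $V(G)=\emptyset$, where the relevant families collapse to $\{\emptyset\}$ and $\emptyset$, can be checked separately.
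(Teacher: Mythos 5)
Your proposal is correct and follows essentially the same route as the paper: both derive $\DD(G)=tr\big(\mathcal{N}[G]\big)$ from the observation that $D$ dominates $G$ if and only if $D$ meets every closed neighborhood (noting that meeting all of $N[G]$ is equivalent to meeting its minimal members), and both then obtain $\mathcal{N}[G]=tr\big(\DD(G)\big)$ from the involution $tr(tr(\mathcal{A}))=\mathcal{A}$, which the paper simply cites to Berge while you optionally supply its short complementation proof.
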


\begin{proof}
From the definitions it is clear that a subset $D$ of vertices is a dominating set of
the graph $G$  if and only if $D\cap N[x]\not= \emptyset$ for every vertex $x\in V(G)$. Hence it follows that
$\DD(G)=tr \big (\mathcal{N}[G]\big )$.
The transversal map is involutive, that is, $tr(tr({\cal A}))={\cal A}$ (see~\cite{B89}). Therefore
we get that $\mathcal{N}[G]$ is, at once,  the transversal of the family $\DD(G)$.
\end{proof}

To conclude this subsection we recall two graph operations that we will use: the \emph{disjoint union} and the \emph{join} of graphs.

Let $G_1, \dots, G_r$ be $r\geq 2$ graphs with pairwise disjoint vertex sets $V(G_1),\dots, V(G_r)$.
The \emph{disjoint union} $G_1+\cdots +G_r$ of $G_1, \dots, G_r$
is the graph with $V(G_1)\cup \cdots \cup V(G_r)$ as set of vertices and $E(G_1)\cup \cdots \cup E(G_r)$ as set of edges;
while the \emph{join} $G_1\vee \cdots \vee G_r$ of $G_1, \dots, G_r$
is the graph with set of vertices $V(G_1)\cup \cdots \cup V(G_r)$ and set of edges
$E(G_1)\cup \cdots \cup E(G_r)\cup \{ \{x_{1},x_{2}\} \, :\, x_{1}\in V(G_{i_1}), x_{2}\in V(G_{i_2})$ and $i_1\neq i_2\}$.
The following lemma deals with the minimal dominating sets of these graphs. Its proof is a
straightforward consequence of the definitions.

\begin{lemma}\label{lema.operacions}
Let $G_1,\dots, G_r$ be $r\geq 2$ graphs with pairwise disjoint set of vertices. Then:
\begin{enumerate}
\item $\DD (G_1+\cdots +G_r)=\{ D_1 \cup \cdots \cup D_r : D_i\in \DD (G_i)\}$.
\item $\DD (G_1\vee \cdots \vee G_r)=\\
\DD (G_1) \cup \cdots \cup  \DD (G_r) 
\cup \{ \{x_{1},x_{2}\} \, : \, x_{k}\in V(G_{i_k})$, $N[x_{k}]\neq V(G_{i_k})$, $i_1\neq i_2\}$. 
\end{enumerate}
\end{lemma}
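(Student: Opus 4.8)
The plan is to derive both equalities directly from the definition of a dominating set, handling the two operations separately. For the disjoint union $G=G_1+\cdots+G_r$, I would first note that $G$ has no edge joining vertices of different blocks $V(G_i)$, so whether a vertex is dominated by a set $D$ depends only on the intersection of $D$ with that vertex's own block; hence $D\subseteq V(G)$ dominates $G$ if and only if $D\cap V(G_i)$ dominates $G_i$ for every $i$, and so the dominating sets of $G$ are exactly the sets $D_1\cup\cdots\cup D_r$ with $D_i$ a dominating set of $G_i$. The second step is that such a set is inclusion-minimal in $G$ precisely when each $D_i$ is inclusion-minimal in $G_i$: shrinking $D_i$ inside $V(G_i)$ has no effect on the domination of the other blocks, so a proper subset of some $D_i$ that still dominates $G_i$ produces a proper subset of $D_1\cup\cdots\cup D_r$ that still dominates $G$, and conversely. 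This yields part (1).

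For the join $G=G_1\vee\cdots\vee G_r$, the key observation is that every vertex of $V(G_i)$ is adjacent in $G$ to all of $V(G_j)$ for $j\neq i$; in particular any nonempty $D\subseteq V(G_i)$ automatically dominates every block other than the $i$-th, so $D$ dominates $G$ if and only if $D$ dominates $G_i$. I would then classify an inclusion-minimal dominating set $D$ of $G$ by the blocks it meets. If $D$ lies in a single block $V(G_i)$, the preceding remark together with a short minimality check (using that each $G_i$ is nonempty, so $\emptyset$ dominates nothing) shows that $D$ is inclusion-minimal in $G$ if and only if $D\in\DD(G_i)$; this accounts for the part $\DD(G_1)\cup\cdots\cup\DD(G_r)$. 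If $D$ meets two distinct blocks, choose $x_1\in D\cap V(G_{i_1})$ and $x_2\in D\cap V(G_{i_2})$ with $i_1\neq i_2$; then $x_1$ dominates $V(G_{i_2})$ together with all further blocks and $x_2$ dominates $V(G_{i_1})$, so $\{x_1,x_2\}$ already dominates $G$ and minimality forces $D=\{x_1,x_2\}$. Conversely, every such pair dominates $G$, and its only nonempty proper subsets are the singletons $\{x_k\}$, which dominate $G$ exactly when $N[x_k]=V(G_{i_k})$; hence $\{x_1,x_2\}$ is inclusion-minimal in $G$ precisely under the stated condition that $N[x_k]\neq V(G_{i_k})$ for $k=1,2$. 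Since a two-block pair is never contained in a set living in one block and the families $\DD(G_i)$ are pairwise disjoint (their members occupy disjoint vertex sets), the right-hand side is an antichain and equals $\DD(G)$, giving part (2).

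There is no deep obstacle here; the whole argument is an unwinding of the definitions. The step that needs the most care is the minimality bookkeeping in the join: one must verify both that a set $D\in\DD(G_i)$ remains inclusion-minimal when regarded inside the larger graph $G$ (so that nothing in the claimed list is spurious) and that no inclusion-minimal dominating set of $G$ escapes the above case split (so that nothing is missing), and this is exactly where the adjacency between distinct blocks and the nonemptiness of the $G_i$ are used.
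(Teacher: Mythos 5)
Your proof is correct: the blockwise decomposition for the disjoint union and the case split (one block versus a cross-block pair, with the $N[x_k]\neq V(G_{i_k})$ minimality check) is exactly the routine unwinding of the definitions that the paper has in mind, since it states the lemma's proof is a straightforward consequence of the definitions and omits the details. Nothing is missing; your careful handling of minimality in the join is the only point requiring attention, and you handle it properly.
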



\subsection{Hypergraphs. Domination hypergraphs}\label{domhypgrafs}


Let $\Omega$ be a non-empty finite set. A \emph{(simple) hypergraph} on $\Omega$
is a non-empty collection ${\cal H}$ of non-empty different subsets of $\Omega$, none of which is a proper subset of another;
that is, if $A,A'\in {\cal H}$ and $A\subseteq A'$ then $A=A'$.
Hypergraphs are also
known as \emph{antichains},  \emph{Sperner systems}  or \emph{clutters}
(for general references on hypergraph theory see~\cite{B89,D95}).
In general, if ${\cal H}$ is a hypergraph on $\Omega$ then $\bigcup_{A\in {\cal H}} A\subseteq \Omega$.
We say that ${\cal H}$ is a \emph{hypergraph with ground set $\Omega$} whenever the equality $\Omega=\bigcup_{A\in {\cal H}} A$ holds.

There are several hypergraphs that can be associated to a graph. In this paper we are
interested in those hypergraphs defined by the dominating sets
of the graph. Namely, if $G$ is a graph with vertex set $V(G)$,
we consider the collection $\DD(G)$ of the inclusion-minimal dominating sets of the graph.
It is clear that $\DD(G)$ is a hypergraph on the finite set $V(G)$. Moreover, by Lemma~\ref{lema.unio},
$\DD(G)$  is a hypergraph with ground set $V(G)$.

The \emph{domination hypergraphs} are those hypergraphs that can be realized by the dominating sets of a graph; that is,
we will say that a hypergraph ${\cal H}$ on $\Omega$ is a \emph{domination hypergraph}
if there exists a graph $G$ such that ${\cal H}=\DD(G)$
(notice that then the set of vertices of $G$ is $V(G)=\bigcup_{A\in {\cal H}} A \subseteq \Omega$). If ${\cal H}=\DD(G)$, we say that the graph $G$ 
is a \emph{realization}  of the domination hypergraph ${\cal H}$.

\begin{remark}
Observe that there exist domination hypergraphs ${\cal H}$ with more than one graph realization.
For example, let us consider the hypergraph ${\cal H}=\{\{1,3\},\{1,4\},\\ \{2,3\},\{2,4\}\}$ on the finite set $\Omega=\{1,2,3,4\}$.
Then ${\cal H}= \DD(G)=\DD(G')=\DD(G'')$ where $G$, $G'$ and $G''$ are the graphs
with vertex sets $V(G)=V(G')=V(G'')=\Omega$ and edge sets
$E(G)=\{\{1,2\}, \{3,4\}\}$,
$E(G')=\{\{1,2\},\{2,3\},\{3,4\}\}$
and $E(G'')=\{\{1,2\},\{1,4\},\{3,4\}\}$.
\end{remark}

The following lemma provides a necessary condition for a hypergraph to be a domination hypergraph.

\begin{lemma}\label{cond.necesari}
Let ${\cal H}$ be a hypergraph with ground set $\Omega$. Assume that ${\cal H}$ is a domination hypergraph.
Then,  $|tr \big ({\cal H} \big )|\leq |\Omega|$.
\end{lemma}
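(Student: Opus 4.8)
The plan is to reduce everything to Lemma~\ref{entorn.prop} together with a counting argument on closed neighborhoods. First I would fix a graph $G$ realizing ${\cal H}$, so that ${\cal H}=\DD(G)$. Since ${\cal H}$ has ground set $\Omega$, we have $\Omega=\bigcup_{A\in{\cal H}}A=\bigcup_{D\in\DD(G)}D$, and Lemma~\ref{lema.unio} tells us that the right-hand side is exactly $V(G)$; hence $V(G)=\Omega$. This identification is the only place where the ground-set hypothesis is used, and it is what makes the statement about $|\Omega|$ meaningful rather than about $|V(G)|$.

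Next I would apply Lemma~\ref{entorn.prop}, which gives $tr\big({\cal H}\big)=tr\big(\DD(G)\big)=\mathcal{N}[G]$. So it suffices to bound $|\mathcal{N}[G]|$. Here the key observation is purely combinatorial: by definition $\mathcal{N}[G]=\min\big(N[G]\big)$, and $N[G]=\{N[x]:x\in V(G)\}$ is a multiset having exactly $|V(G)|$ members (one closed neighborhood for each vertex). Consequently the number of \emph{distinct} sets appearing in $N[G]$ is at most $|V(G)|$, and $\mathcal{N}[G]$, being a subfamily of those distinct sets, satisfies $|\mathcal{N}[G]|\le|V(G)|$.

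Combining the two steps yields $|tr({\cal H})|=|\mathcal{N}[G]|\le|V(G)|=|\Omega|$, which is the desired inequality. I do not expect any real obstacle here: the argument is essentially a chain of identifications supplied by the earlier lemmas plus the trivial bound ``an $n$-vertex graph has at most $n$ closed neighborhoods''. The only point deserving a word of care in the write-up is the passage $V(G)=\Omega$, to make sure the reader sees why the bound is in terms of $|\Omega|$ and not merely $|V(G)|$; everything else is immediate.
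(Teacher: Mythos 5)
Your proof is correct and follows essentially the same route as the paper: identify $tr({\cal H})$ with $\mathcal{N}[G]$ via Lemma~\ref{entorn.prop} and bound its size by the number of vertices. Your extra care in justifying $V(G)=\Omega$ from the ground-set hypothesis (via Lemma~\ref{lema.unio}) is a point the paper takes for granted, but it does not change the argument.
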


\begin{proof}
Let $G$ be a graph with vertex set $V(G)=\Omega$ and such that ${\cal H}=\DD(G)$. Then, by applying Lemma~\ref{entorn.prop}
we get that $tr \big ({\cal H} \big )=tr \big (\DD(G)\big )=
\mathcal{N}[G]$. So, $|tr \big ({\cal H} \big )|= |\mathcal{N}[G]| \leq |V(G)|=|\Omega|$.
\end{proof}

From the above, not all hypergraphs are domination hypergraphs. Indeed, let
${\cal A}=\{A_1, \dots , A_r \}$ be a family of $r\geq |\Omega| +1$ 
non-empty different subsets of $\Omega$ with $A_i\not\subseteq A_j$ if $i\neq j$
(for instance, the family  ${\cal A}=\{ A \subseteq \Omega \, : \, |A|=2\}$ where $|\Omega|\geq 4$).
Since $tr(tr({\cal A}))={\cal A}$,
from Lemma~\ref{cond.necesari} it follows that the hypergraph ${\cal H}=tr({\cal A})$ is not a domination hypergraph.

Therefore, a natural question that arises at this point is to characterize
whenever a hypergraph ${\cal H}$ is a domination hypergraph. The following subsection
deals with this issue for a special family of hypergraphs.


\subsection{Uniform hypergraphs. Domination hypergraphs of the form ${\cal U}_{r,\Omega}$}\label{subuniform}


Let $\Omega$ be a finite set of size $|\Omega|=n$
and let  $1\leq r\leq n$.
We say that a hypergraph ${\cal H}$ on $\Omega$ is \emph{$r$-uniform} if $|A|=r$ for all $A\in {\cal H}$.
Let us denote by ${\cal U}_{r,\Omega}$ the $r$-uniform hypergraph on $\Omega$ whose elements are all the subsets of $\Omega$ of size $r$;
that is,  ${\cal U}_{r,\Omega}=\{A\subseteq\Omega \, : \, |A|=r\}$.

The following proposition provides a characterization
of the domination hypergraphs of
the form ${\cal U}_{r,\Omega}$, as well as the description
of their graph realizations.
This proposition was partially stated in~\cite{J97}.

Before stating the proposition, let us introduce some notation. The \emph{complete graph} with $n$ vertices
is denoted by $K_n$, whereas
the \emph{complete graph} with vertex set $\Omega$ will be denoted by $K_{\Omega}$, and
the \emph{empty graph} with vertex set $\Omega$ will be denoted by $\overline{K_{\Omega}}$.
Observe that if $|\Omega|=2m$, then
the graph $G$ obtained from the complete graph $K_{\Omega}$ by deleting the edges of a perfect matching
is the join graph of empty graphs on sets of size two; that is,
$G=\overline{K_{\Omega_1}}\vee \cdots \vee \overline{K_{\Omega_m}}$ where $\Omega= \Omega_1 \cup \cdots \cup \Omega_m$ and $|\Omega_i|=2$ for
$1\leq i \leq m$ (namely the vertices of the sets $\Omega_i$ are the endpoints of each one of the edges of the perfect matching).

\begin{proposition}\label{domuniform.caract}
Let $\Omega$ be a finite set of size $|\Omega|=n$. Let $1\leq r\leq n$. Then,
the hypergraph  ${\cal U}_{r,\Omega}$ is a domination hypergraph if and only if $r=1$, or $r=n$, or $r=2$ and  $n$ is even.
Moreover,  the following statements hold:
\begin{enumerate}
\item The complete graph $K_{\Omega}$ is the unique graph $G$ such that ${\cal U}_{1,\Omega}=\DD(G)$.
\item The empty graph $\overline{K_{\Omega}}$ is the unique graph $G$ such that ${\cal U}_{n,\Omega}=\DD(G)$.
\item If $n=2m$, then there are $(2m)!/(2^mm!)$ graphs $G$ such that  ${\cal U}_{2,\Omega}=\DD(G)$.
Namely, $G$ is any graph of the form
$G=\overline{K_{\Omega_1}}\vee \cdots \vee \overline{K_{\Omega_m}}$ where $\Omega= \Omega_1 \cup \cdots \cup \Omega_m$ and $|\Omega_i|=2$ for
$1\leq i \leq m$.
\end{enumerate}
\end{proposition}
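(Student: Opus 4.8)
The plan is to prove the characterization by combining the necessary condition from Lemma~\ref{cond.necesari} with explicit graph constructions for the three cases $r=1$, $r=n$, and $r=2$ with $n$ even, and then to establish uniqueness of the realizations.

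\medskip

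First I would dispose of the ``only if'' direction. Suppose ${\cal U}_{r,\Omega}$ is a domination hypergraph. Compute its transversal: since ${\cal U}_{r,\Omega}$ consists of all $r$-subsets of an $n$-set, a standard fact about blockers of uniform hypergraphs gives $tr({\cal U}_{r,\Omega})={\cal U}_{n-r+1,\Omega}$, whose size is $\binom{n}{n-r+1}=\binom{n}{r-1}$. Lemma~\ref{cond.necesari} then forces $\binom{n}{r-1}\leq n$. I would analyze this inequality: it holds trivially when $r-1=0$ (i.e.\ $r=1$) or when $r-1=n$ (i.e.\ $r=n+1$, impossible since $r\le n$, but the edge case $r=n$ needs separate treatment since then $n-r+1=1$ and $tr=\{\{\omega\}:\omega\in\Omega\}$ has size $n$, which is fine). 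For $2\le r\le n-1$ one has $\binom{n}{r-1}\le n$ only when $\binom{n}{r-1}=\binom{n}{1}$, forcing $r-1=1$, i.e.\ $r=2$; and then I must additionally show that $n$ odd is excluded. For $r=2$ the hypergraph is ${\cal U}_{2,\Omega}=\DD(G)$ for some $G$, and by Lemma~\ref{entorn.prop}, $\mathcal N[G]=tr({\cal U}_{2,\Omega})=\{\Omega\setminus\{\omega\}:\omega\in\Omega\}$, so every vertex has closed neighborhood of size $n-1$ (no closed neighborhood can be all of $\Omega$ or the transversal would differ, and none can be smaller since all $n$ sets $\Omega\setminus\{\omega\}$ must appear). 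Hence $G$ is the complement of a graph in which every vertex has degree exactly $1$, i.e.\ the complement of a perfect matching — which requires $n$ even.

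\medskip

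For the ``if'' direction and the structural statements (1)--(3), I would argue each case directly. For $r=1$: $\DD(K_\Omega)=\{\{\omega\}:\omega\in\Omega\}={\cal U}_{1,\Omega}$, and conversely if $\DD(G)={\cal U}_{1,\Omega}$ then every singleton is a minimal dominating set, forcing every vertex adjacent to all others, so $G=K_\Omega$. For $r=n$: $\DD(\overline{K_\Omega})=\{\Omega\}={\cal U}_{n,\Omega}$ by Lemma~\ref{lema.unio} (all vertices isolated), and conversely if $\DD(G)=\{\Omega\}$ then $V_0(G)=\Omega$ by Lemma~\ref{lema.unio}, so $G=\overline{K_\Omega}$. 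For $r=2$, $n=2m$: writing $G=\overline{K_{\Omega_1}}\vee\cdots\vee\overline{K_{\Omega_m}}$ with $|\Omega_i|=2$, Lemma~\ref{lema.operacions}(2) gives $\DD(G)=\bigcup_i\DD(\overline{K_{\Omega_i}})\cup\{\{x_1,x_2\}:x_k\in\Omega_{i_k},\,N[x_k]\ne\Omega_{i_k},\,i_1\ne i_2\}$; since each $\overline{K_{\Omega_i}}$ has $\DD=\{\Omega_i\}$ with $|\Omega_i|=2$, and every vertex $x$ in $\Omega_i$ has $N[x]=\{x\}\ne\Omega_i$, the first part contributes the $2$-sets $\Omega_i$ and the second part contributes all $2$-sets meeting two different blocks — together exactly all $2$-subsets of $\Omega$. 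For uniqueness: if $\DD(G)={\cal U}_{2,\Omega}$ then as shown above $G=\overline{H}$ where $H$ is a perfect matching on $\Omega$, and the number of perfect matchings on a $2m$-set is $(2m)!/(2^mm!)$, giving the count in (3).

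\medskip

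The main obstacle I expect is the bookkeeping in the ``only if'' direction for $r=2$, $n$ odd: I need to be careful that $tr({\cal U}_{2,\Omega})$ really is $\{\Omega\setminus\{\omega\}\}$ and hence that $\mathcal N[G]$ forces a regular structure; the subtlety is that $\mathcal N[G]$ is the set of \emph{minimal} closed neighborhoods, so I must rule out that some neighborhoods are larger (non-minimal) — but if $N[x]=\Omega$ for some $x$, then $x$ lies in every dominating set, contradicting that $\{\omega\}\notin{\cal U}_{2,\Omega}$ yet $x$ should be dominatable by a $2$-set not containing it. Pinning this down cleanly, together with correctly invoking the involutivity of $tr$, is where the care is needed; the rest is routine application of Lemmas~\ref{lema.unio}, \ref{entorn.prop}, \ref{lema.operacions} and \ref{cond.necesari}.
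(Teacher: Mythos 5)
Your proposal is correct and takes essentially the same approach as the paper: you compute $tr({\cal U}_{r,\Omega})={\cal U}_{n-r+1,\Omega}$ and use the bound $|\mathcal{N}[G]|\leq|\Omega|$ (Lemma~\ref{cond.necesari}) to force $r\in\{1,2,n\}$, exclude $r=2$ with $n$ odd via the $(n-2)$-regular / complement-of-a-perfect-matching structure, and settle the remaining cases with the explicit realizations $K_\Omega$, $\overline{K_\Omega}$ and $\overline{K_{\Omega_1}}\vee\cdots\vee\overline{K_{\Omega_m}}$ together with the $(2m)!/(2^m m!)$ count, exactly as in the paper. The only blemish is in your closing paragraph: a vertex with $N[x]=\Omega$ does \emph{not} lie in every dominating set (that is the behaviour of an isolated vertex); the correct contradiction is that $\{x\}$ would then itself be a minimal dominating set, impossible since ${\cal U}_{2,\Omega}$ contains no singletons --- and in any case your earlier counting observation, that the $n$ distinct sets $\Omega\setminus\{\omega\}$ must all occur among the $n$ closed neighborhoods, already rules this out.
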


\begin{proof}
Statements (1) and (2) are a straightforward consequence of the definitions. Let us prove the third statement.

Assume that $n=2m$ is even.
From the description of the minimal domination sets of the join graph it follows that
${\cal D}(\overline{K_{\Omega_1}}\vee \cdots \vee \overline{K_{\Omega_m}})={\cal U}_{2,\Omega}$
if $\Omega= \Omega_1 \cup \cdots \cup \Omega_m$ and $|\Omega_i|=2$ for
$1\leq i \leq m$. So the uniform hypergraph ${\cal U}_{2,\Omega}$ is a domination hypergraph and the graphs
of the form $G=\overline{K_{\Omega_1}}\vee \cdots \vee \overline{K_{\Omega_m}}$ are domination realizations of
${\cal U}_{2,\Omega}$. Conversely, let us prove that if $G$ is a graph such that $\DD(G)={\cal U}_{2,\Omega}$ then $G$ is obtained
from the complete graph $K_{\Omega}$ by deleting the edges of a perfect matching.
So, assume that $\DD(G)={\cal U}_{2,\Omega}$.
Then from Lemma~\ref{entorn.prop} it follows that
$\mathcal{N}[G]=tr({\cal U}_{2,\Omega})$. Since
$tr({\cal U}_{2,\Omega})={\cal U}_{2m-1,\Omega}$, hence
$\mathcal{N}[G]={\cal U}_{2m-1,\Omega}$, and
therefore  all the vertices of $G$ have degree $2m-2$. Consequently, the graph $G$ is obtained
from the complete graph $K_{\Omega}$ by deleting the edges of a perfect matching, as we wanted to prove.

From the above we conclude that if $n=2m$ is even, then
${\cal U}_{2,\Omega}=\DD(G)$ if and only if
$G$ is a graph obtained from $K_{\Omega}$ by deleting the edges of a perfect matching.
It is well known that
the number of perfect matchings in a complete graph
$K_{2m}$ is given by the double factorial $(2m-1)!!$, that is, $(2m)!/(2^mm!)$.
Hence, if $n=2m$ is even, then there are $(2m)!/(2^mm!)$ graphs $G$ such that $\DD(G)={\cal U}_{2,\Omega}$. This completes the proof of the third statement.

To finish the proof of the proposition we must demonstrate that if $3\leq r \leq n-1$, or if $r=2$ and $n$ is odd, then
${\cal U}_{r,\Omega}$ is not a domination hypergraph. Otherwise, assume that there exists a graph $G$ with vertex set $V(G)=\Omega$ and such that
$\DD(G)={\cal U}_{r,\Omega}$. Since  $tr({\cal U}_{r,\Omega})={\cal U}_{n-r+1,\Omega}$,
from Lemma~\ref{entorn.prop} we get that  $\mathcal{N}[G]={\cal U}_{n-r+1,\Omega}$.
On one hand, the size of $\mathcal{N}[G]$ is at most $n$ because $V(G)=\Omega$. On the other hand,
${\cal U}_{n-r+1,\Omega}$ has size $n\choose n-r+1$. Therefore
${n\choose n-r+1} \leq n$, and thus $r=2$.
At this point we have that $G$ is a graph of order $n$ with  $\mathcal{N}[G]={\cal U}_{n-r+1,\Omega}={\cal U}_{n-1,\Omega}$.
So, $G$ is a $(n-2)$-regular graph of order $n$,
which  is not possible if $n$ is odd.
This completes the proof of the proposition.
\end{proof}


\section{Domination completions and decompositions of the unifom hypergraphs ${\cal U}_{r,\Omega}$}\label{compdecomp}


Not all hypergraphs are domination hypergraphs. Therefore,
given a hypergraph ${\cal H}$ a natural question is to study domination hypergraphs ``close" to ${\cal H}$.
Here, we consider this problem whenever ${\cal H}$ is the \emph{uniform} hypergraph ${\cal U}_{r,\Omega}$.
Our goal is to introduce the poset of domination completions of ${\cal U}_{r,\Omega}$ (Subsection~\ref{subcomp}),
and to prove that the minimal elements of this poset provide a decomposition of ${\cal U}_{r,\Omega}$ into domination hypergraphs
(Subsection~\ref{subdecomp}).


\subsection{Poset of domination completions of ${\cal U}_{r,\Omega}$}\label{subcomp}


Let $\Omega$ be a finite set and let us consider the $r$-uniform hypergraph
${\cal U}_{r,\Omega}$. Observe that if ${\cal H}$ is a hypergraph on $\Omega$, the elements of ${\cal H}$ are pairwise non-comparable sets,
and hence it follows that ${\cal U}_{r,\Omega} \subseteq {\cal H}$
if and only if ${\cal U}_{r,\Omega} = {\cal H}$. Therefore, if ${\cal U}_{r,\Omega}$ is not a domination hypergraph,
then there does not exist a graph $G$ with vertex set $V(G)=\Omega$
such that ${\cal U}_{r,\Omega} \subseteq \DD(G)$.
Thus, a question that arises at this point is to determine domination hypergraphs ${\cal H}$ \emph{close} to the hypergraph ${\cal U}_{r,\Omega}$.
This subsection deals with the existence of such \emph{domination completions} ${\cal H}$ of ${\cal U}_{r,\Omega}$.

A crucial point when looking for the domination completions ${\cal H}$ of ${\cal U}_{r,\Omega}$
is to take into account all the dominating sets of a graph $G$ instead of considering
only the inclusion-minimal dominating sets of $G$; that is,
taking into account the family $D(G)$
instead of the family $\DD(G)$. Specifically, in order to seek the domination hypergraphs close to ${\cal U}_{r,\Omega}$
we introduce a suitable partial order $\leqslant$ on the set of hypergraphs that involves the \emph{monotone
increasing family of subsets} ${\cal H}^+$ associated to a hypergraph ${\cal H}$.

Let $\Omega$ be a finite set. Let ${\cal H}$ be a hypergraph on $\Omega$.
Then we define ${\cal H}^+$ as the family whose elements are the subsets $A\subseteq \Omega$ such that there exists
$A_0\in {\cal H}$ with  $A_0\subseteq A$.
Observe that ${\cal H}^+$ is a monotone increasing family of subsets of $\Omega$ whose inclusion-minimal
elements are the subsets of ${\cal H}$; that is,  ${\cal H}=\min\big( {\cal H}^+ \big)$. Therefore, the hypergraph  ${\cal H}$ is
uniquely determined by the monotone increasing family ${\cal H}^+$. For instance, if $G$ is a graph then $\DD(G)$ is
a hypergraph on $V(G)$ whose associated monotone increasing family of subsets is $\DD(G)^+=D(G)$,
and so $D(G)$ is uniquely determined by $\DD(G)$.

To compare two hypergraphs ${\cal H}_1,{\cal H}_2$ on $\Omega$, we use their associated monotone
increasing families of subsets ${\cal H}_1^+,{\cal H}^+_2$.
It is clear that if ${\cal H}_1 \subseteq {\cal H}_2$,  then ${\cal H}_1^+\subseteq {\cal H}_2^+$.
However, the converse is not true; that is, there exist hypergraphs with ${\cal H}_1\not \subseteq {\cal H}_2$
and ${\cal H}_1^+\subseteq {\cal H}_2^+$ (for instance the hypergraphs ${\cal H}_1=\{ \{1,2\},\{1,3\},\{2,3\}\}$ and ${\cal H}_2=\{ \{1,2\},\{ 3 \} \}$).
This fact leads us to consider the binary relation $\leqslant$ defined as follows:
if ${\cal H}_1$ and ${\cal H}_2$ are two hypergraphs on the finite set $\Omega$,
then we say that
$${\cal H}_1\leqslant {\cal H}_2 \textrm{ if and only if } {\cal H}_1^+\subseteq {\cal H}_2^+.$$

\begin{lemma}
\label{lem.ordre}
Let $\Omega$ be a finite set. The following statements hold:
\begin{enumerate}
\item If ${\cal H}_1,{\cal H}_2$ are two hypergraphs on $\Omega$
then, ${\cal H}_1\leqslant {\cal H}_2$ if and only if for all $A_1\in {\cal H}_1$ there exists
$A_2\in {\cal H}_2$ such that $A_2\subseteq A_1$.
\item The binary relation $\leqslant$ is a partial order on the set
of hypergraphs on $\Omega$.
\end{enumerate}
\end{lemma}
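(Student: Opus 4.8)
The plan is to prove the two statements in order, using the first to set up the second. For statement~(1), I would argue both implications directly from the definition of ${\cal H}^+$. Suppose ${\cal H}_1 \leqslant {\cal H}_2$, i.e.\ ${\cal H}_1^+ \subseteq {\cal H}_2^+$. Take any $A_1 \in {\cal H}_1$. Since ${\cal H}_1 \subseteq {\cal H}_1^+$ (every set in a hypergraph contains itself), we get $A_1 \in {\cal H}_2^+$, and by the definition of ${\cal H}_2^+$ there exists $A_2 \in {\cal H}_2$ with $A_2 \subseteq A_1$, as required. Conversely, suppose that for every $A_1 \in {\cal H}_1$ there is some $A_2 \in {\cal H}_2$ with $A_2 \subseteq A_1$. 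Let $A \in {\cal H}_1^+$; then there is $A_1 \in {\cal H}_1$ with $A_1 \subseteq A$, and by hypothesis there is $A_2 \in {\cal H}_2$ with $A_2 \subseteq A_1 \subseteq A$, so $A \in {\cal H}_2^+$. Hence ${\cal H}_1^+ \subseteq {\cal H}_2^+$, i.e.\ ${\cal H}_1 \leqslant {\cal H}_2$.

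For statement~(2), I would verify the three axioms of a partial order. Reflexivity is immediate since ${\cal H}^+ \subseteq {\cal H}^+$. Transitivity follows from transitivity of set inclusion: if ${\cal H}_1^+ \subseteq {\cal H}_2^+$ and ${\cal H}_2^+ \subseteq {\cal H}_3^+$ then ${\cal H}_1^+ \subseteq {\cal H}_3^+$. The only point requiring care is antisymmetry: I must show that ${\cal H}_1 \leqslant {\cal H}_2$ and ${\cal H}_2 \leqslant {\cal H}_1$ force ${\cal H}_1 = {\cal H}_2$. From the two inequalities we get ${\cal H}_1^+ = {\cal H}_2^+$; then since a hypergraph is recovered from its up-closure via ${\cal H} = \min({\cal H}^+)$ (noted in the text just before the lemma), we conclude ${\cal H}_1 = \min({\cal H}_1^+) = \min({\cal H}_2^+) = {\cal H}_2$. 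Alternatively, and perhaps more self-containedly, one can use statement~(1): for each $A_1 \in {\cal H}_1$ there is $A_2 \in {\cal H}_2$ with $A_2 \subseteq A_1$, and for that $A_2$ there is $A_1' \in {\cal H}_1$ with $A_1' \subseteq A_2 \subseteq A_1$; since ${\cal H}_1$ is an antichain, $A_1' = A_1$, forcing $A_1 = A_2 \in {\cal H}_2$. Thus ${\cal H}_1 \subseteq {\cal H}_2$, and by symmetry ${\cal H}_2 \subseteq {\cal H}_1$.

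I do not anticipate a genuine obstacle here; the lemma is essentially a bookkeeping exercise. The one place that deserves a careful sentence rather than a wave of the hand is antisymmetry, since it is the only step that genuinely uses the antichain (Sperner) property of hypergraphs — without it, distinct monotone families could share the same up-closure only if they shared their minimal elements, but distinct antichains never do, and spelling out why is the substantive content. I would present the antisymmetry argument via statement~(1) so that the proof is self-contained and does not lean on the informal remark preceding the lemma.
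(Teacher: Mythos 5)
Your proof is correct and follows exactly the route the paper intends: the paper's own proof simply states that both claims are a straightforward consequence of the definition of ${\cal H}^+$ and of the identity ${\cal H}=\min({\cal H}^+)$, which is precisely what you spell out (including using that identity, or equivalently the antichain property via statement (1), for antisymmetry). No gap; you have just written out the details the paper leaves to the reader.
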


\begin{proof}
The proofs of the statements are a straightforward consequence of the definition of the family ${\cal H}^+$
and of the fact that  ${\cal H}=\min ({\cal H}^+) $.
\end{proof}

Now, by using the partial order $\leqslant$, we define the \emph{domination completions} of the hypergraph ${\cal U}_{r,\Omega}$ as 
any domination hypergraph ${\cal H}$ with ground set $\Omega$ such that ${\cal U}_{r,\Omega}\leqslant {\cal H}$.
We denote by  $\Dom(r,\Omega)$ the set whose elements are the domination completions of the hypergraph ${\cal U}_{r,\Omega}$;
that  is,
$$ \Dom (r,\Omega) =\big \{{\cal H} \, : \,
{\cal H} \textrm{ is a domination hypergraph with ground set $\Omega$ and } {\cal U}_{r,\Omega} \leqslant {\cal H}\big \}.$$

\begin{proposition}\label{nobuit}
Let $\Omega$ be a finite set. Then,  the set $\Dom(r,\Omega)$  is non-empty.
\end{proposition}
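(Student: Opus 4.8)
The plan is to exhibit, for every finite set $\Omega$ and every $r$ with $1\le r\le |\Omega|$, at least one domination hypergraph $\mathcal{H}$ with ground set $\Omega$ satisfying $\mathcal{U}_{r,\Omega}\leqslant\mathcal{H}$. By Lemma~\ref{lem.ordre}(1), $\mathcal{U}_{r,\Omega}\leqslant\mathcal{H}$ means exactly that every $r$-subset of $\Omega$ contains some member of $\mathcal{H}$. The cheapest way to guarantee this is to find a graph $G$ with $V(G)=\Omega$ whose minimal dominating sets all have size at most $r$; then $\DD(G)$ is a domination hypergraph with ground set $\Omega$ (ground set by Lemma~\ref{lema.unio}), and since every $D\in\DD(G)$ has $|D|\le r$, every $r$-subset $A$ of $\Omega$ contains some dominating set of size $\le r$, hence contains some element of $\DD(G)$ after possibly shrinking; more precisely any $r$-subset is itself a dominating set and therefore contains a minimal one, which lies in $\DD(G)$. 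So it suffices to produce a graph on $\Omega$ all of whose minimal dominating sets have cardinality at most $r$, equivalently whose upper domination number $\Gamma(G)$ is at most $r$.

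First I would dispose of the trivial case $r=|\Omega|=n$: take $G=\overline{K_\Omega}$, the empty graph, whose unique minimal dominating set is $\Omega$ itself, so $\DD(G)=\mathcal{U}_{n,\Omega}$ and the completion is $\mathcal{U}_{n,\Omega}$ itself. For the general case $1\le r\le n-1$ I would use a join construction, which by Lemma~\ref{lema.operacions}(2) tends to produce small minimal dominating sets. Concretely, partition $\Omega$ into $r$ nonempty blocks $\Omega_1,\dots,\Omega_r$ (possible since $r\le n$) and set $G=\overline{K_{\Omega_1}}\vee\cdots\vee\overline{K_{\Omega_r}}$, the complete $r$-partite graph on these blocks. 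Its maximal independent sets are precisely the blocks $\Omega_i$, and its minimal dominating sets are, by Lemma~\ref{lema.operacions}(2), the blocks $\Omega_i$ of size $\ge 2$ together with the two-element sets $\{x_1,x_2\}$ with $x_1,x_2$ in distinct blocks each of which is not a whole-graph neighborhood. In all cases every minimal dominating set has size at most $\max_i|\Omega_i|$; choosing a balanced partition gives $\max_i|\Omega_i|=\lceil n/r\rceil$, which is at most $r$ precisely when $n\le r^2$. To remove the restriction $n\le r^2$ one can instead take blocks of size at most $r$ but arrange that no block of size $\ge 2$ ever becomes minimal — e.g. use a join of one empty graph on a block of size exactly $r$ with enough single-vertex blocks so that only the two-element cross sets and that one $r$-set survive; a cleaner uniform choice is $G=\overline{K_{\Omega_0}}\vee K_{\Omega\setminus\Omega_0}$ with $|\Omega_0|=r$, whose minimal dominating sets are $\Omega_0$ and the sets $\{x_0,x_1\}$ with $x_0\in\Omega_0$, $x_1\notin\Omega_0$ (for $n\ge r+1$), all of size $\le r$.

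I would then verify the two defining requirements for membership in $\Dom(r,\Omega)$: that $\DD(G)$ has ground set $\Omega$ — immediate from Lemma~\ref{lema.unio} once $G$ has no structural degeneracy, and in the join construction $\bigcup_{D\in\DD(G)}D=V(G)=\Omega$ is transparent — and that $\mathcal{U}_{r,\Omega}\leqslant\DD(G)$, which follows from the size bound together with Lemma~\ref{lem.ordre}(1) as sketched above. The main obstacle is purely bookkeeping: one must make sure that when $n>r$ the chosen graph is not such that some exceptional clause of Lemma~\ref{lema.operacions}(2) (the condition $N[x_k]\ne V(G_{i_k})$) fails and thereby either kills the two-element cross sets or resurrects a large block as a minimal dominating set; checking that the concrete $G=\overline{K_{\Omega_0}}\vee K_{\Omega\setminus\Omega_0}$ behaves as claimed for all $r\le n-1$, and handling the boundary $r=1$ (where $G=K_\Omega$ works, $\DD(K_\Omega)=\mathcal{U}_{1,\Omega}$) and $r=n$ separately, completes the argument. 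Nothing deep is needed beyond Lemmas~\ref{lema.unio}, \ref{lema.operacions} and~\ref{lem.ordre}; the proposition is really an existence statement witnessed by an explicit, easily analyzed graph.
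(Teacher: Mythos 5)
Your conclusion is right and your final witness graph can be made to work, but two steps of your argument as written are genuinely flawed. First, the reduction you lean on --- that it suffices to find $G$ whose minimal dominating sets all have size at most $r$, because then ``any $r$-subset is itself a dominating set'' --- is false. The cycle $C_5$ with $r=2$ is a counterexample: every minimal dominating set has size $2$, yet a $2$-set formed by two adjacent vertices is not dominating and contains no minimal dominating set, so ${\cal U}_{2,\Omega}\not\leqslant \DD(C_5)$. So bounding the upper domination number by $r$ does not give ${\cal U}_{r,\Omega}\leqslant\DD(G)$; the containment of a member of $\DD(G)$ in every $r$-set has to be checked directly.

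Second, you misapply Lemma~\ref{lema.operacions}(2) to your preferred graph $G=\overline{K_{\Omega_0}}\vee K_{\Omega\setminus\Omega_0}$: the cross pairs $\{x_0,x_1\}$ require $N[x_1]\neq V(K_{\Omega\setminus\Omega_0})$, which fails because every vertex of the complete part is universal inside it. Those pairs are therefore not minimal dominating sets; in fact $\DD(G)=\{\Omega_0\}\cup\{\{x\}: x\in\Omega\setminus\Omega_0\}$. With the family you actually wrote down, the verification would break for any $r$-set disjoint from $\Omega_0$ (possible once $|\Omega|\ge 2r$), and since you only invoke the false size-bound principle, the inequality ${\cal U}_{r,\Omega}\leqslant\DD(G)$ is never really established. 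It does hold for the correct $\DD(G)$: any $r$-set either equals $\Omega_0$ or meets $\Omega\setminus\Omega_0$ and hence contains one of the singletons --- this graph and this argument are exactly those of Lemma~\ref{evitacio}. Finally, note that the paper's own proof is a one-liner you almost wrote down: $K_\Omega$ works not just for $r=1$ but for every $r$, since ${\cal U}_{1,\Omega}=\DD(K_\Omega)$ and every $r$-set contains a singleton, so ${\cal U}_{r,\Omega}\leqslant{\cal U}_{1,\Omega}$ and ${\cal U}_{1,\Omega}\in\Dom(r,\Omega)$; no join construction or case distinction is needed.
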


\begin{proof}
The uniform hypergraph ${\cal U}_{1,\Omega}$ is a domination hypergraph,
and it is clear that ${\cal U}_{r,\Omega} \leqslant {\cal U}_{1,\Omega}$.
So ${\cal U}_{1,\Omega} \in \Dom (r,\Omega)$.
\end{proof}

The partial order $\leqslant$ induces a poset structure in the set $\Dom(r,\Omega)$
of the domination completions of the hypergraph ${\cal U}_{r,\Omega}$.
The minimal elements of the
poset  $\big(\Dom(r,\Omega),\leqslant\big)$ are
the \emph{optimal} domination completions of ${\cal U}_{r,\Omega}$. This subsection is completed by showing that if  ${\cal U}_{r,\Omega}$
is not a domination hypergraph, then ${\cal U}_{r,\Omega}$ has at least two different optimal domination completions (Theorem~\ref{thcompletacio}).
The following technical lemma is a key point in order to prove this result.

\begin{lemma}
\label{evitacio}
Let $\Omega$ be a finite set.
Let ${\cal H}$ be a hypergraph on $\Omega$ such that ${\cal U}_{r,\Omega}\leqslant {\cal H}$ and ${\cal U}_{r,\Omega}\neq {\cal H}$.
Then, there exists a domination hypergraph ${\cal H}_0\in \Dom(r,\Omega)$ such that ${\cal H}\not\leqslant {\cal H}_0$.
\end{lemma}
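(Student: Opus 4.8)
The goal is to show that no hypergraph ${\cal H}$ strictly above ${\cal U}_{r,\Omega}$ (with ${\cal H} \neq {\cal U}_{r,\Omega}$) dominates \emph{all} domination completions; equivalently, every such ${\cal H}$ is ``avoided'' by some domination hypergraph ${\cal H}_0$ with ${\cal U}_{r,\Omega} \leqslant {\cal H}_0$. The plan is to exhibit, for the given ${\cal H}$, an explicit graph $G_0$ on $\Omega$ whose domination hypergraph ${\cal H}_0 = \DD(G_0)$ lies in $\Dom(r,\Omega)$ but fails ${\cal H} \leqslant {\cal H}_0$. By Lemma~\ref{lem.ordre}(1), ${\cal H} \not\leqslant {\cal H}_0$ means there is some $A \in {\cal H}$ admitting no $B \in {\cal H}_0$ with $B \subseteq A$; dually ${\cal U}_{r,\Omega} \leqslant {\cal H}_0$ means every $r$-subset of $\Omega$ contains some member of ${\cal H}_0 = \DD(G_0)$, i.e.\ every $r$-subset of $\Omega$ is a dominating set of $G_0$.

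First I would record the combinatorial meaning of the hypotheses. Since ${\cal U}_{r,\Omega} \leqslant {\cal H}$ and ${\cal H} \neq {\cal U}_{r,\Omega}$, Lemma~\ref{lem.ordre}(1) gives that every member of ${\cal H}$ contains some $r$-subset, and there is at least one $A \in {\cal H}$ with $|A| \le r$ but $A \notin {\cal U}_{r,\Omega}$; pushing this through, one finds a set $A \in {\cal H}^+$ with $|A| \le r$, hence (taking a minimal such set inside ${\cal H}$) a member $A_\ast \in {\cal H}$ with $|A_\ast| < r$, OR ${\cal H}$ contains some $r$-set together with extra structure. The clean dichotomy to aim for: either (i) ${\cal H}$ contains a set $A_\ast$ of size $< r$, or (ii) ${\cal H}^+$ properly contains ${\cal U}_{r,\Omega}^+$ only through sets of size $\ge r+1$ — but the latter is impossible since ${\cal U}_{r,\Omega}^+$ already contains \emph{all} sets of size $\ge r$, so in fact case (i) always holds. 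So the real content is: \textbf{${\cal H}$ contains some $A_\ast \subsetneq \Omega$ with $|A_\ast| = t < r$} (note $A_\ast \neq \emptyset$, and $A_\ast \neq \Omega$ since $t < r \le n$). This is the key structural extraction, and it is the step I expect to need the most care, because one must be sure the strict inequality ${\cal H} \neq {\cal U}_{r,\Omega}$ forces a \emph{smaller} set rather than merely a different one — but indeed any $B \in {\cal H} \setminus {\cal U}_{r,\Omega}$ has $|B| \neq r$, and $|B| > r$ is ruled out because then $B$ would contain an $r$-subset $A_0 \in {\cal U}_{r,\Omega} \subseteq {\cal H}$ with $A_0 \subsetneq B$, contradicting that ${\cal H}$ is an antichain. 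Hence $|B| < r$.

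With $A_\ast \in {\cal H}$ of size $t < r$ in hand, I would build $G_0$ so that (a) every $r$-subset of $\Omega$ dominates $G_0$, and (b) no minimal dominating set of $G_0$ is contained in $A_\ast$. For (a): it suffices that $G_0$ has no isolated vertices \emph{and} the minimum size of a dominating set leaves room — more precisely, I want the domination number small and every large enough set dominating; a natural choice is to take $G_0$ to be a disjoint union of a star $K_{1,|\Omega|-t-1}$ on the vertices $\Omega \setminus A_\ast$ (center some fixed $c \notin A_\ast$) together with edges making every vertex of $A_\ast$ adjacent to $c$ as well — i.e.\ $c$ is adjacent to \emph{all} other vertices, and $A_\ast$ is otherwise independent. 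Then $N[c] = \Omega$, and for $v \ne c$ we have $N[v] = \{v,c\}$. So $\mathcal{N}[G_0]$ is the antichain of minimal elements of $\{\Omega\} \cup \{\{v,c\}: v \neq c\}$, which is just $\{\{v,c\} : v \neq c\}$. By Lemma~\ref{entorn.prop}, $\DD(G_0) = tr(\{\{v,c\}: v\neq c\}) = \{\{c\}\} \cup \{\Omega \setminus \{c\}\}$. Now check (a): every $r$-subset with $r \ge 1$ either contains $c$ (then it is a superset of $\{c\} \in \DD(G_0)$) or, if not, it has size $r \le n-1$ so it could fail to contain $\Omega\setminus\{c\}$ — so this $G_0$ is \emph{too crude}. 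I would instead iterate the idea: make a join $\overline{K_{A_\ast}} \vee H$ where $H$ is a graph on $\Omega \setminus A_\ast$ with no isolated vertices and small domination number, and apply Lemma~\ref{lema.operacions}(2). The join forces every minimal dominating set to either be a minimal dominating set of a part or a 2-element ``cross'' pair; choosing $H$ with domination number $1$ handled by a dominating vertex gives that $\{x\}$ for that vertex, plus pairs, are the minimal dominating sets, and since $t = |A_\ast| \ge 1$ one arranges that $A_\ast$ contains none of these (as the singleton lies in $\Omega \setminus A_\ast$ and the cross-pairs have a vertex outside $A_\ast$). Simultaneously every $r$-subset with $r \ge 2$ meets both parts or lies in one part but then, since $r > t = |A_\ast|$, it cannot lie in $A_\ast$, so it meets $\Omega\setminus A_\ast$ nontrivially and contains a cross-pair or the singleton — giving (a). Verifying the two bullet points against Lemma~\ref{lema.operacions}(2) and Lemma~\ref{lem.ordre}(1) is then routine bookkeeping; the only subtlety, and the place I'd be most careful, is the boundary case $r = t+1$ and small $n$, where one must check by hand that the constructed $G_0$ still has every $r$-set dominating and still avoids $A_\ast$. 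Finally, $G_0$ has vertex set $\Omega$ and no isolated vertices, so ${\cal H}_0 = \DD(G_0)$ has ground set $\Omega$ by Lemma~\ref{lema.unio}, completing the argument that ${\cal H}_0 \in \Dom(r,\Omega)$ with ${\cal H} \not\leqslant {\cal H}_0$.
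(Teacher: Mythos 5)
Your reduction of the problem to finding a member $A_\ast\in{\cal H}$ with $|A_\ast|=t<r$ is the right first step and matches the paper, but note a slip in how you get it: ${\cal U}_{r,\Omega}\leqslant{\cal H}$ does \emph{not} give ${\cal U}_{r,\Omega}\subseteq{\cal H}$. The antichain argument still works if run correctly (if $|B|>r$ for some $B\in{\cal H}$, then an $r$-subset $A_0\varsubsetneq B$ contains some $C\in{\cal H}$ because ${\cal U}_{r,\Omega}\leqslant{\cal H}$, and $C\varsubsetneq B$ contradicts incomparability); the paper gets the same conclusion more directly from antisymmetry of $\leqslant$, which gives ${\cal H}\not\leqslant{\cal U}_{r,\Omega}$ and hence a member of size less than $r$.

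The genuine gap is in your construction $G_0=\overline{K_{A_\ast}}\vee H$. By Lemma~\ref{lema.operacions}(2), $\DD(G_0)$ contains $\DD(\overline{K_{A_\ast}})=\{A_\ast\}$: the set $A_\ast$ is itself a minimal dominating set of $G_0$, because deleting any $a\in A_\ast$ leaves $a$ undominated (all its neighbours lie in $V(H)$). So your requirement (b) fails at the only witness you have, and the construction can genuinely produce a completion that does \emph{not} avoid ${\cal H}$: for $\Omega=\{1,2,3\}$, $r=2$, ${\cal H}=\{\{1\},\{2,3\}\}$, $A_\ast=\{1\}$ and $H=K_{\{2,3\}}$, your graph is $K_3$, so ${\cal H}_0=\{\{1\},\{2\},\{3\}\}$ and ${\cal H}\leqslant{\cal H}_0$. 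Requirement (a) is also not routine for an arbitrary $H$ with a dominating vertex: with $A_\ast=\{1,2\}$, $r=3$, $\Omega=\{1,\dots,7\}$ and $H$ the star with center $3$, the $3$-set $\{4,5,6\}$ fails to dominate the leaf $7$, so ${\cal U}_{3,\Omega}\not\leqslant\DD(G_0)$ and ${\cal H}_0\notin\Dom(3,\Omega)$. The missing idea, which is exactly the paper's, is to pad $A_\ast$ up to a set $\Omega_1$ of size exactly $r$ with $A_\ast\varsubsetneq\Omega_1$, and take $G_0=\overline{K_{\Omega_1}}\vee K_{\Omega_2}$ with $\Omega_2=\Omega\setminus\Omega_1$: then $\DD(G_0)=\{\Omega_1\}\cup\{\{w\}:w\in\Omega_2\}$ (the complete side contributes no cross pairs since its vertices have full closed neighbourhood in their part), every $r$-set either equals $\Omega_1$ or meets $\Omega_2$, and no member of $\DD(G_0)$ lies inside $A_\ast$ because $A_\ast\varsubsetneq\Omega_1$ strictly and $A_\ast\cap\Omega_2=\emptyset$. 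The boundary cases $r=1$ (hypotheses vacuous) and $r=n$ (take ${\cal H}_0={\cal U}_{n,\Omega}$) should be treated separately first, as the paper does, since the above needs $2\le r\le n-1$.
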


\begin{proof}
First notice that for $r=1$ the hypotheses of the lemma do not hold because there is no hypergraph ${\cal H}$ on $\Omega$ 
different from  ${\cal U}_{1,\Omega}$ such that ${\cal U}_{1,\Omega}\leqslant {\cal H}$.
Moreover, observe that if $r=n$, then ${\cal U}_{r,\Omega}$ is a domination hypergraph, and so 
the hypergraph ${\cal H}_0={\cal U}_{n,\Omega}$ fulfills the required conditions.
Therefore, from now on we may assume that $2\le r\le n-1$.

Let $2\le r\le n-1$.
By assumption, ${\cal U}_{r,\Omega}\leqslant {\cal H}$ and ${\cal U}_{r,\Omega}\neq {\cal H}$.
Thus, since $\leqslant$ is a partial order, it follows that ${\cal H} \not\leqslant {\cal U}_{r,\Omega}$.
Therefore, there exists $A_0\in{\cal H}$ such that $A\not\subseteq A_{0}$ for all $A\subseteq \Omega$ with $|A|=r$;
that is, there exists $A_0\in{\cal H}$ with $|A_0|=t<r$. Without loss of generality we may assume that
$\Omega=\{ w_1, \dots, w_t,w_{t+1}, \dots , w_r, \dots, w_n\}$ and that $A_0=\{ w_1, \dots, w_t\}$.
Set $\Omega_1=\{ w_1, \dots, w_t,w_{t+1}, \dots ,w_r\}$ and set $\Omega_2=\Omega \setminus \Omega_1$.

At this point let us consider the domination hypergraph ${\cal H}_0=\DD(G_0)$ 
where $G_0$ is the join
graph $G_0=\overline{K_{\Omega_1}}\vee K_{\Omega_2}$. So, from Lemma~\ref{lema.operacions}
we get that ${\cal H}_0=\{\Omega_1\}\cup \{\{w\} \, : \, w\in \Omega_2\}$.
The proof will be completed by showing that ${\cal U}_{r,\Omega} \leqslant {\cal H}_0$ and that ${\cal H}\not\leqslant {\cal H}_0$.

In order to prove the inequality ${\cal U}_{r,\Omega} \leqslant {\cal H}_0$ we must demonstrate that for all $A\in {\cal U}_{r,\Omega}$
there exists $A'\in {\cal H}_0$ such that $A'\subseteq A$. So let $A\subseteq \Omega$ with $|A|=r$. If $A=\Omega_1$ then
set $A'=\Omega_1\in {\cal H}_0$; whereas if $A\neq \Omega_1$, then there exists $w\in \Omega_2$ such that $w\in A$ and, so,
we can consider $A'=\{w\}\in {\cal H}_0$.

To finish we must demonstrate that ${\cal H}\not\leqslant {\cal H}_0$. On the contrary,
let us assume that ${\cal H}\leqslant {\cal H}_0$. Then, since $A_0\in {\cal H}$, there exists $A\in{\cal H}_0$ such that $A\subseteq A_0$.
So either $\Omega_1\subseteq A_0$ or there exists $w\in \Omega_2$ such that $\{w\}\subseteq A_0$. In any case a contradiction is obtained because,
by construction, $A_0\varsubsetneq \Omega_1$ and $A_0\cap \Omega_2=\emptyset$.
This completes the proof of the lemma.
\end{proof}

\begin{theorem}\label{thcompletacio}
Let $\Omega$ be a finite set. Then
the non-empty poset $\big(\Dom(r,\Omega),\leqslant\big)$ of the domination completions of the hypergraph ${\cal U}_{r,\Omega}$
has a unique minimal element if and only if
the hypergraph ${\cal U}_{r,\Omega}$ is a domination hypergraph.
\end{theorem}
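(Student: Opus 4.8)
The plan is to prove both implications separately, with the forward direction being the easy one and the converse being where all the work lies (and where the preceding Lemma~\ref{evitacio} does the heavy lifting).

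For the ``if'' direction, suppose ${\cal U}_{r,\Omega}$ is a domination hypergraph. Then ${\cal U}_{r,\Omega}\in\Dom(r,\Omega)$, and since ${\cal U}_{r,\Omega}\leqslant{\cal H}$ for \emph{every} ${\cal H}\in\Dom(r,\Omega)$ by the very definition of the set of completions, ${\cal U}_{r,\Omega}$ is a minimum element, hence the unique minimal element. This is immediate from the definitions and needs only a sentence or two.

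For the ``only if'' direction I would prove the contrapositive: assume ${\cal U}_{r,\Omega}$ is \emph{not} a domination hypergraph, and exhibit two distinct minimal elements of the poset. First I would observe that the poset has at least one minimal element: $\Dom(r,\Omega)$ is non-empty by Proposition~\ref{nobuit}, and it is a finite poset (there are only finitely many hypergraphs on a finite $\Omega$), so minimal elements exist. Let ${\cal H}_1$ be one such minimal element. Since ${\cal U}_{r,\Omega}$ is not a domination hypergraph, ${\cal H}_1\neq{\cal U}_{r,\Omega}$, yet ${\cal U}_{r,\Omega}\leqslant{\cal H}_1$; thus ${\cal H}_1$ satisfies the hypotheses of Lemma~\ref{evitacio}, which furnishes a domination hypergraph ${\cal H}_0\in\Dom(r,\Omega)$ with ${\cal H}_1\not\leqslant{\cal H}_0$. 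Now ${\cal H}_0$ need not itself be minimal, but since the poset is finite there is a minimal element ${\cal H}_2\in\Dom(r,\Omega)$ with ${\cal H}_2\leqslant{\cal H}_0$. I claim ${\cal H}_1\neq{\cal H}_2$: if they were equal then ${\cal H}_1={\cal H}_2\leqslant{\cal H}_0$, contradicting ${\cal H}_1\not\leqslant{\cal H}_0$. Hence ${\cal H}_1$ and ${\cal H}_2$ are two distinct minimal elements, so the poset does not have a unique minimal element, completing the contrapositive.

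The only subtlety worth flagging is the finiteness argument used twice: that a non-empty finite poset has minimal elements below any given element. Since $\Omega$ is finite, the collection of hypergraphs on $\Omega$ is finite, so this is routine and I would state it in one line. The genuinely substantive content is entirely packaged inside Lemma~\ref{evitacio}, so the main obstacle here is not in this theorem's proof at all but was already dealt with: the construction of the ``avoiding'' domination hypergraph ${\cal H}_0=\DD(\overline{K_{\Omega_1}}\vee K_{\Omega_2})$. Given that lemma, the theorem is a short formal deduction.
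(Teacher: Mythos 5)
Your proof is correct and is essentially the paper's argument: both directions rest on exactly the same ingredients, namely Lemma~\ref{evitacio} together with the finiteness of the poset $\big(\Dom(r,\Omega),\leqslant\big)$. The only difference is packaging --- the paper argues directly that a unique minimal element ${\cal H}$ is a minimum and then invokes the lemma to force ${\cal U}_{r,\Omega}={\cal H}$, whereas you prove the contrapositive by exhibiting a second minimal element below the hypergraph ${\cal H}_0$ supplied by the lemma --- which is a logically equivalent rearrangement, not a different route.
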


\begin{proof}
It is clear that if ${\cal U}_{r,\Omega}$ is a domination hypergraph, then the poset $\big(\Dom(r,\Omega), \leqslant \big)$
has a unique minimal element, namely $\min\big(\Dom(r,\Omega), \leqslant \big)=\{{\cal U}_{r,\Omega}\}$.
Therefore, we must only prove that if the poset $\big(\Dom(r,\Omega),\leqslant\big)$ has
a unique minimal element, then ${\cal U}_{r,\Omega}$ is a domination hypergraph.
So let us assume that $\min\big(\Dom(r,\Omega), \leqslant\big) =\{{\cal H}\}$. On one hand, ${\cal U}_{r,\Omega}\leqslant {\cal H}$
because ${\cal H}\in\Dom(r,\Omega)$. On the other hand, since ${\cal H}$ is the unique minimal element of the poset
$\big(\Dom(r,\Omega),\leqslant\big)$,
we get that  ${\cal H}\leqslant {\cal H}_0$ for all domination hypergraph ${\cal H}_0\in \Dom(r,\Omega)$. Therefore, from Lemma~\ref{evitacio} we conclude that
${\cal U}_{r,\Omega}={\cal H}$. In particular,  ${\cal U}_{r,\Omega}$ is a domination hypergraph,
as we wanted to prove.
\end{proof}

In the following example we present the description of the domination completions of the $2$-uniform hypergraph
${\cal U}_{2,\Omega}$ where $\Omega=\{1,2,3\}$
(in Subsection~\ref{section2n} we study  the general case ${\cal U}_{2,\Omega}$ where $\Omega$ is a finite set of odd size).

\begin{example}\label{exemple23.2}
Let $\Omega=\{1,2,3\}$ and let us consider the $2$-uniform hypergraph ${\cal U}_{2,\Omega}$;
that is,  ${\cal U}_{2,\Omega}=\{\{1,2\},\{1,3\},\{2,3\}\}$.
From Proposition~\ref{domuniform.caract} we know that ${\cal U}_{2,\Omega}$ is not a domination hypergraph. Therefore, by applying
Theorem~\ref{thcompletacio} we conclude that
the non-empty poset $\big(\Dom(2,\Omega),\leqslant\big)$ 
has at least two minimal elements. Let us compute these minimal elements. Let $G$ be a graph with vertex set $V(G)=\{1,2,3\}$.
It is clear that, if $E(G)=\emptyset$ then  $\DD(G)=\big \{\{1,2,3\}\big \}$; whereas 
if $E(G)=\big \{\{a,b\}\big \}$ then $\DD(G)=\big \{\{a,c\},\{b,c\}\big\}$; while
if $E(G)=\big \{\{a,b\},\{a,c\}\big\}$ then $\DD(G)=\big \{\{a\},\{b,c\}\big \}$;
but if $|E(G)|=3$ then $\DD(G)=\big \{\{1\},\{2\},\{3\}\big \}$.
Therefore we conclude that ${\cal U}_{2,\Omega}\leqslant \DD(G)$ if and only if either $|E(G)|=2$ or $|E(G)|=3$. Thus, 
the hypergraph ${\cal U}_{2,\Omega}$ has four domination completions,
namely, the three domination hypergraphs defined by the graphs of size 2 and the domination
hypergraph
defined by the graph of size 3; that is, $\Dom (2,\Omega)=\big \{{\cal H}_{1},{\cal H}_{2},{\cal H}_{3},{\cal U}_{1,\Omega}\}$
where ${\cal H}_{i}=\{\{i\},\{j,k\}\}$ being $\{i,j,k\}=\{1,2,3\}$.
Observe that ${\cal H}_{0}\leqslant {\cal U}_{1,\Omega}$ and that
${\cal H}_{i}\not\leqslant{\cal H}_{j}$ if $i \neq j$.
Therefore, the poset $\big(\Dom(2,\Omega),\leqslant\big)$ has three minimal elements ${\cal H}_{1}$, ${\cal H}_{2}$ and ${\cal H}_{3}$.
So ${\cal H}_{1}$, ${\cal H}_{2}$, ${\cal H}_{3}$ are the optimal domination completions of ${\cal U}_{2,\Omega}$.
\end{example}


\subsection{Domination decompositions of ${\cal U}_{r,\Omega}$}\label{subdecomp}


Let $\Omega$ be a finite set.
We say that a domination hypergraph ${\cal H}$ is a \emph{minimal domination completion} of the $r$-uniform hypergraph ${\cal U}_{r,\Omega}$
if ${\cal H}$ is a  minimal element of the poset $\big(\Dom(r,\Omega), \leqslant \big )$.
Let us denote by $\DDom (r,\Omega)$ the set whose elements are the minimal domination completions ${\cal H}$ of ${\cal U}_{r,\Omega}$; that is,
$$ \DDom (r,\Omega)=\min\big(\Dom(r,\Omega),\leqslant \big ).$$

We have seen in Theorem~\ref{thcompletacio} that $\DDom(r,\Omega)$ has cardinality one if and only if
${\cal U}_{r,\Omega}$ is a domination hypergraph. The following theorem deals with the case of cardinality greater
than one, and shows that we can recover uniquely the hypergraph from the elements of $\DDom(r,\Omega)$.
Before stating our result we must introduce the hypergraph operation $\sqcap$.

Let $\Omega$ be a finite set and let ${\cal H}_1, \dots ,{\cal H}_{\ell}$ be $\ell$ hypergraphs on $\Omega$.
Then we define the hypergraph ${\cal H}_1 \sqcap \dots \sqcap {\cal H}_{\ell}$ as:
\begin{align*}
{\cal H}_1 \sqcap \dots \sqcap {\cal H}_{\ell}
&=\min\big ( {\cal H}_1^+ \cap \dots \cap {\cal H}_{\ell}^+ \big ).
\end{align*}
Observe that, from the definition of ${\cal H}^+$ it is not hard to prove that
\begin{align*}
{\cal H}_1 \sqcap \dots \sqcap {\cal H}_{\ell}
=\min\big \{ A_1 \cup \cdots \cup A_{\ell}\, : \, A_i\in {\cal H}_i\text{ for } 1\le i\le {\ell}\big \}.
\end{align*}

\begin{theorem}
\label{thdecom}
Let $\Omega$ be a finite set.
Let  $\DDom (r,\Omega)=\{{\cal H}_1, \dots, {\cal H}_{s}\}$ be the set of the minimal domination completions of ${\cal U}_{r,\Omega}$.
Then, $${\cal U}_{r,\Omega}={\cal H}_1 \sqcap \dots \sqcap {\cal H}_s.$$
\end{theorem}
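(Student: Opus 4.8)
The plan is to show the two inclusions $\mathcal{U}_{r,\Omega} \leqslant \mathcal{H}_1 \sqcap \dots \sqcap \mathcal{H}_s$ and $\mathcal{H}_1 \sqcap \dots \sqcap \mathcal{H}_s \leqslant \mathcal{U}_{r,\Omega}$, which together with antisymmetry of $\leqslant$ (Lemma~\ref{lem.ordre}) give equality. In terms of the monotone families, this amounts to proving $\mathcal{U}_{r,\Omega}^+ = \mathcal{H}_1^+ \cap \dots \cap \mathcal{H}_s^+$, since $(\mathcal{H}_1 \sqcap \dots \sqcap \mathcal{H}_s)^+ = \mathcal{H}_1^+ \cap \dots \cap \mathcal{H}_s^+$ by the very definition of $\sqcap$ (the minimal elements of an intersection of monotone families generate that intersection).

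First I would handle the easy inclusion $\mathcal{U}_{r,\Omega}^+ \subseteq \mathcal{H}_i^+$ for every $i$. This is immediate: each $\mathcal{H}_i \in \Dom(r,\Omega)$, so by definition $\mathcal{U}_{r,\Omega} \leqslant \mathcal{H}_i$, i.e. $\mathcal{U}_{r,\Omega}^+ \subseteq \mathcal{H}_i^+$. Intersecting over $i$ gives $\mathcal{U}_{r,\Omega}^+ \subseteq \bigcap_i \mathcal{H}_i^+$, hence $\mathcal{U}_{r,\Omega} \leqslant \mathcal{H}_1 \sqcap \dots \sqcap \mathcal{H}_s$.

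The reverse inclusion is the heart of the matter. I need: every $A \subseteq \Omega$ lying in all the $\mathcal{H}_i^+$ also lies in $\mathcal{U}_{r,\Omega}^+$, i.e. $|A| \ge r$ — equivalently, no set $A$ with $|A| < r$ can belong to $\bigcap_i \mathcal{H}_i^+$. Suppose for contradiction that some $A_0$ with $|A_0| = t < r$ satisfies $A_0 \in \mathcal{H}_i^+$ for all $i$. The idea is to exhibit a single domination completion that $A_0$ fails to "reach", contradicting minimality. Concretely, I would run the construction from Lemma~\ref{evitacio}: take $\Omega_1 \supseteq A_0$ with $|\Omega_1| = r$, set $\Omega_2 = \Omega \setminus \Omega_1$, and let $\mathcal{H}_0 = \DD(\overline{K_{\Omega_1}} \vee K_{\Omega_2}) = \{\Omega_1\} \cup \{\{w\} : w \in \Omega_2\}$. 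As shown there, $\mathcal{H}_0 \in \Dom(r,\Omega)$, so $\mathcal{H}_0$ sits above some minimal completion, say $\mathcal{H}_j \leqslant \mathcal{H}_0$; thus $\mathcal{H}_j^+ \subseteq \mathcal{H}_0^+$. But then $A_0 \in \mathcal{H}_j^+ \subseteq \mathcal{H}_0^+$ would force $A_0$ to contain a member of $\mathcal{H}_0$, i.e. $\Omega_1 \subseteq A_0$ or $\{w\} \subseteq A_0$ for some $w \in \Omega_2$, both impossible since $A_0 \subsetneq \Omega_1$ and $A_0 \cap \Omega_2 = \emptyset$. This contradiction establishes that $\bigcap_i \mathcal{H}_i^+ \subseteq \mathcal{U}_{r,\Omega}^+$.

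The main obstacle I anticipate is a purely bookkeeping one: being careful that $\DDom(r,\Omega)$ is genuinely the set of \emph{minimal} elements, so that every completion — in particular the auxiliary $\mathcal{H}_0$ — lies above at least one element of $\DDom(r,\Omega)$ (this uses finiteness of $\Dom(r,\Omega)$, which holds since there are finitely many hypergraphs on $\Omega$). One should also double-check the edge cases $r = 1$ and $r = n$ separately, where $\mathcal{U}_{r,\Omega}$ is already a domination hypergraph and $s = 1$, so the statement reads $\mathcal{U}_{r,\Omega} = \mathcal{H}_1 = \mathcal{U}_{r,\Omega}$ trivially; for $2 \le r \le n-1$ the argument above applies verbatim, essentially reusing the graph $\overline{K_{\Omega_1}} \vee K_{\Omega_2}$ from Lemma~\ref{evitacio}.
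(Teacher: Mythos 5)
Your proposal is correct and takes essentially the same route as the paper: the easy inequality is proved identically, and the converse rests on the same two ingredients, namely the auxiliary completion $\DD(\overline{K_{\Omega_1}}\vee K_{\Omega_2})$ from Lemma~\ref{evitacio} and the fact that every element of the finite poset $\Dom(r,\Omega)$ lies above some minimal domination completion. The only difference is presentational: the paper invokes Lemma~\ref{evitacio} as a black box applied to ${\cal H}_1\sqcap\cdots\sqcap{\cal H}_s$, while you inline its construction against a hypothetical small set $A_0\in\bigcap_i{\cal H}_i^+$.
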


\begin{proof}
Let us denote ${\cal H}_0= {\cal H}_1 \sqcap \dots \sqcap {\cal H}_s
=\min\big\{\,A_1\cup\cdots\cup A_s\,:\, A_i\in {\cal H}_i\text{ for } 1\le i\le s\big \}$.

First let us show that ${\cal U}_{r,\Omega}\leqslant {\cal H}_0$, that is,
we must prove that if $A\in {\cal U}_{r,\Omega}$ then there exists $A_0\in {\cal H}_0$ such that $A_0\subseteq A$.
So, let $A\in {\cal U}_{r,\Omega}$. Let $1\leq i \leq s$.
Since ${\cal U}_{r,\Omega}\leqslant {\cal H}_i$, for all $A\in {\cal U}_{r,\Omega},$ there exists
$A_i\in {\cal H}_i$ with $A_i\subseteq A$.
Therefore we get that $A_1 \cup \cdots \cup A_s \subseteq A$.
So there exists $A_0\in {\cal H}_0$ such that $A_0\subseteq A$.

Next we are going to prove that ${\cal U}_{r,\Omega}={\cal H}_0$.
Observe that if ${\cal U}_{r,\Omega}\neq {\cal H}_0$ then,
by applying Lemma~\ref{evitacio} we get that
there exists a domination hypergraph ${\cal H}'_0\in\Dom(r,\Omega)$ such that
${\cal H}_0\not\leqslant {\cal H}'_0$.
The proof will be completed by showing that  this leads us to a contradiction.
On one hand, since ${\cal H}'_0\in \Dom(r,\Omega)$
and $\DDom (r,\Omega)=\{{\cal H}_1, \dots, {\cal H}_s\}$,
we conclude that there exists $i_0\in \{1, \dots ,s\}$ such that ${\cal H}_{i_0}\leqslant {\cal H}'_0$.
On the other hand, from the definition of ${\cal H}_0$ and by applying Lemma~\ref{lem.ordre} it is easy to check
that the inequality ${\cal H}_0\leqslant {\cal H}_{i_0}$ holds. Therefore we conclude that ${\cal H}_0\leqslant  {\cal H}'_0$
because the binary relation $\leqslant$ is a partial order. Hence a contradiction is achieved, as we wanted to prove.
\end{proof}

The previous theorem leads us to the following definition.
Let $\Omega$ be a finite set. We say that a
family $\{{\cal H}_1,\ldots,{\cal H}_t\}$  of $t\geq1$ distinct domination hypergraphs
with ground set $\Omega$ is a \emph{$t$-decomposition} of the $r$-uniform hypergraph ${\cal U}_{r,\Omega}$
if ${\cal U}_{r,\Omega}={\cal H}_1 \sqcap \dots \sqcap {\cal H}_t$.
Let us denote $\mathfrak{D}(r,\Omega)=\min \{t \, : \, \textrm{there exists a $t$-decomposition of ${\cal U}_{r,\Omega}$}\}$.
It is clear that $\mathfrak{D}(r,\Omega)=1$
if and only if the $r$-uniform hypergraph ${\cal U}_{r,\Omega}$ is a domination hypergraph.

From Theorem~\ref{thdecom} we get that the domination hypergraphs in $\DDom(r,\Omega)$ provide a decomposition of ${\cal U}_{r,\Omega}$,
and therefore if the $r$-uniform hypergraph ${\cal U}_{r,\Omega}$ has $s$ minimal domination
completions, then $\mathfrak{D} (r,\Omega)\leq s$. The next proposition states that, in fact, to compute $\mathfrak{D} (r,\Omega)$ it is enough
to consider only those decompositions consisting of
minimal domination completions of  ${\cal U}_{r,\Omega}$.

\begin{proposition}
\label{recup2}
Let $\Omega$ be a finite set. Let $\mathfrak{D}(r,\Omega)=\delta$. Then
there exist $\delta$ minimal domination
completions ${\cal H}_{1}, \dots, {\cal H}_{\delta} \in \DDom (r,\Omega)$ of the $r$-uniform hypergraph ${\cal U}_{r,\Omega}$
such that $\{{\cal H}_{1}, \dots, {\cal H}_{\delta}\}$ is a $\delta$-decomposition of ${\cal U}_{r,\Omega}$.
\end{proposition}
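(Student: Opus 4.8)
The plan is to start from an arbitrary $\delta$-decomposition and replace each hypergraph in it by a suitable minimal domination completion, showing that the combination is unchanged. So let $\{\mathcal{G}_1,\dots,\mathcal{G}_\delta\}$ be a $\delta$-decomposition of $\mathcal{U}_{r,\Omega}$, i.e.\ $\mathcal{U}_{r,\Omega}=\mathcal{G}_1\sqcap\cdots\sqcap\mathcal{G}_\delta$. First I would observe that from $\mathcal{U}_{r,\Omega}\leqslant\mathcal{G}_1\sqcap\cdots\sqcap\mathcal{G}_\delta$ and the characterization of $\sqcap$ it follows immediately (via Lemma~\ref{lem.ordre}) that $\mathcal{U}_{r,\Omega}\leqslant\mathcal{G}_j$ for each $j$; since each $\mathcal{G}_j$ is a domination hypergraph with ground set $\Omega$, we get $\mathcal{G}_j\in\Dom(r,\Omega)$. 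Hence for each $j$ there exists a minimal domination completion $\mathcal{H}_j\in\DDom(r,\Omega)$ with $\mathcal{H}_j\leqslant\mathcal{G}_j$ (every element of a finite poset dominates a minimal element below it).

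Next I would show that $\{\mathcal{H}_1,\dots,\mathcal{H}_\delta\}$ is again a decomposition of $\mathcal{U}_{r,\Omega}$, so that taking the distinct ones among them yields a decomposition of size at most $\delta$, which by minimality of $\delta$ forces size exactly $\delta$ and hence that the $\mathcal{H}_j$ are pairwise distinct. Write $\mathcal{H}_0=\mathcal{H}_1\sqcap\cdots\sqcap\mathcal{H}_\delta=\min(\mathcal{H}_1^+\cap\cdots\cap\mathcal{H}_\delta^+)$. On the one hand, since $\mathcal{U}_{r,\Omega}\leqslant\mathcal{H}_j$ for every $j$ (as each $\mathcal{H}_j\in\Dom(r,\Omega)$), the argument in the first part of the proof of Theorem~\ref{thdecom} gives $\mathcal{U}_{r,\Omega}\leqslant\mathcal{H}_0$. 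On the other hand, from $\mathcal{H}_j\leqslant\mathcal{G}_j$ we get $\mathcal{H}_j^+\subseteq\mathcal{G}_j^+$ for each $j$, hence $\mathcal{H}_1^+\cap\cdots\cap\mathcal{H}_\delta^+\subseteq\mathcal{G}_1^+\cap\cdots\cap\mathcal{G}_\delta^+$, and taking inclusion-minimal elements this says precisely $\mathcal{H}_0\leqslant\mathcal{G}_1\sqcap\cdots\sqcap\mathcal{G}_\delta=\mathcal{U}_{r,\Omega}$. Combining the two inequalities and using that $\leqslant$ is a partial order (Lemma~\ref{lem.ordre}(2)), we conclude $\mathcal{H}_0=\mathcal{U}_{r,\Omega}$.

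Finally I would tidy up the counting. The multiset $\{\mathcal{H}_1,\dots,\mathcal{H}_\delta\}$ may have repetitions; let $\{\mathcal{H}_{i_1},\dots,\mathcal{H}_{i_t}\}$ be its distinct members, with $t\le\delta$. Since $\mathcal{A}\sqcap\mathcal{A}=\mathcal{A}$ (because $\mathcal{A}^+\cap\mathcal{A}^+=\mathcal{A}^+$), we still have $\mathcal{U}_{r,\Omega}=\mathcal{H}_{i_1}\sqcap\cdots\sqcap\mathcal{H}_{i_t}$, so $\{\mathcal{H}_{i_1},\dots,\mathcal{H}_{i_t}\}$ is a $t$-decomposition of $\mathcal{U}_{r,\Omega}$ into domination hypergraphs. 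By definition of $\mathfrak{D}(r,\Omega)=\delta$ we must have $t\ge\delta$, hence $t=\delta$; thus all the $\mathcal{H}_j$ are already distinct, and $\{\mathcal{H}_1,\dots,\mathcal{H}_\delta\}\subseteq\DDom(r,\Omega)$ is the desired $\delta$-decomposition consisting of minimal domination completions.

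I expect no serious obstacle here; the only point requiring a little care is the direction $\mathcal{H}_0\leqslant\mathcal{U}_{r,\Omega}$, where one must correctly pass from the inclusion of the monotone increasing families to an inequality between their sets of minimal elements — but this is exactly the content of the definition of $\leqslant$ together with $\mathcal{H}=\min(\mathcal{H}^+)$, so it is immediate. The reuse of the first half of the proof of Theorem~\ref{thdecom} and of Lemma~\ref{evitacio}-style reasoning is what makes the argument short; indeed one could alternatively finish by invoking Lemma~\ref{evitacio} directly to rule out $\mathcal{U}_{r,\Omega}\neq\mathcal{H}_0$, exactly as in Theorem~\ref{thdecom}, which is perhaps the cleanest route.
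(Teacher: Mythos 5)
Your proposal is correct and follows essentially the same route as the paper: replace each factor of a minimum decomposition by a minimal domination completion below it, show the resulting $\sqcap$ is squeezed between $\mathcal{U}_{r,\Omega}$ on both sides (monotonicity of $\sqcap$ plus the fact that each factor is a completion), and conclude by antisymmetry of $\leqslant$, with the distinctness/counting handled by minimality of $\delta$. No gaps; the paper's proof is the same argument phrased for an arbitrary $t$-decomposition.
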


\begin{proof}
To prove the proposition it is enough to show that any decomposition of ${\cal U}_{r,\Omega}$ can be transformed into a decomposition consisting
of minimal domination completions of ${\cal U}_{r,\Omega}$; that is,
we must demonstrate that if $\{{\cal H}'_1,\dots,{\cal H}'_t\}$ is a $t$-decomposition of ${\cal U}_{r,\Omega}$, then
there exist $\ell$ distinct hypergraphs ${\cal H}_{i_1}, \dots, {\cal H}_{i_{\ell}} \in \DDom (r,\Omega)$ (with $\ell\leq t$)
such that $\{{\cal H}_{i_1}, \dots, {\cal H}_{i_{\ell}}\}$ is an $\ell$-decomposition of ${\cal U}_{r,\Omega}$.

So, assume that $\{{\cal H}'_1,\dots,{\cal H}'_t\}$ is a $t$-decomposition of ${\cal U}_{r,\Omega}$, and
let $\{{\cal H}_1, \dots, {\cal H}_s\}=\DDom(r,\Omega)=\min \big (\Dom(r,\Omega), \leqslant \big )$.

It is clear that ${\cal H}_1' \sqcap \dots \sqcap {\cal H}_t '\leqslant {\cal H}'_k$ for all $k\in\{1,\dots, t\}$.
Therefore, ${\cal U}_{r,\Omega}\leqslant {\cal H}'_k$, and so ${\cal H}'_k\in \Dom(r,\Omega)$.
Since ${\cal H}_1, \dots, {\cal H}_s$ are the minimal elements of $(\Dom(r,\Omega), \leqslant \big )$,
for all $k\in \{1, \dots ,t\}$ there exists $\alpha_k\in \{1, \dots ,s\}$ such that ${\cal H}_{\alpha_k}\leqslant {\cal H}'_k$. Let
$\{{\cal H}_{i_1}, \dots, {\cal H}_{i_{\ell}}\}=\{{\cal H}_{\alpha_1}, \dots, {\cal H}_{\alpha_t}\}$, where ${\cal H}_{i_1}, \dots, {\cal H}_{i_{\ell}}$
are different (observe that $\ell\leq t$). 
On one hand we have that
${\cal H}_{i,1} \sqcap \dots \sqcap {\cal H}_{i_{\ell}}=
{\cal H}_{\alpha_1} \sqcap \cdots \sqcap {\cal H}_{\alpha_t}
\leqslant 
{\cal H}_1'\sqcap \dots \sqcap {\cal H}_{t}'= {\cal U}_{r,\Omega}$.
On the other hand, ${\cal U}_{r,\Omega}\leqslant
{\cal H}_{i_1} \sqcap \cdots \sqcap {\cal H}_{i_{\ell}}$
because ${\cal U}_{r,\Omega}\leqslant {\cal H}_{i_k}$ for all $k$.
Since $\leqslant$ is a partial order, we conclude that the equality
${\cal U}_{r,\Omega}={\cal H}_{i_1} \sqcap \cdots \sqcap {\cal H}_{i_{\ell}}$ holds; that is,
$\{{\cal H}_{i_1}, \dots, {\cal H}_{i_{\ell}}\}$ is an $\ell$-decomposition of ${\cal U}_{r,\Omega}$.
\end{proof}

To conclude this subsection let us show an example of decomposition, namely we compute the domination related parameter
$\mathfrak{D}(2,\Omega)$ where $\Omega=\{1,2,3\}$
(in Subsection~\ref{section2n} we study
the general case $\mathfrak{D}(2,\Omega)$ where $\Omega$ is a finite set of odd size).

\begin{example}\label{uniform23}
Let us consider the $2$-uniform hypergraph ${\cal U}_{2,\Omega}$
with ground set $\Omega=\{1,2,3\}$.
From Example~\ref{exemple23.2} we get that
${\cal U}_{2,\Omega}$ has three minimal domination completions, namely,
$\DDom(2,\Omega)=\{{\cal H}_{1},{\cal H}_{2}, {\cal H}_{3}\}$, where ${\cal H}_{i}=\{\{i\},\{j,k\}\}$ being $\{i,j,k\}=\{1,2,3\}$.
Therefore, by applying Theorem~\ref{thdecom} we get that ${\cal U}_{2,\Omega}={\cal H}_{1} \sqcap {\cal H}_{2} \sqcap {\cal H}_{3}$.
So the minimal domination completions of the non-domination hypergraph ${\cal U}_{2,\Omega}$ provides the domination decomposition
${\cal U}_{2,\Omega}=\allowbreak \DD(G_{1}) \sqcap \DD(G_{2}) \sqcap \DD(G_{3})$ where $G_{i}$ is the graph with vertex set $V(G_{i})=\{1,2,3\}$
and edge set $E(G_{i})=\{\{i,j\},\{i,k\}\}$ being $\{i,j,k\}=\{1,2,3\}$.
Thus we have that $2\leq \mathfrak{D}(2,\Omega) \leq 3$. However observe that
in this case, if $i_1 \neq i_2$, then ${\cal H}_{i_1}\sqcap  {\cal H}_{i_2}=
\big \{ \{i_1\},\{i_2,i_3\} \big \} \sqcap \big \{ \{i_2\},\{i_1,i_3\} \big \} =
\min \big \{ \{i_1,i_2\},\allowbreak  \{i_1,i_3\},\{i_2,i_3\},\{i_1,i_2,i_3\} \big \} =\big \{ \{i_1,i_2\},\{i_1,i_3\},\{i_2,i_3\} \big \}=
{\cal U}_{2,\Omega}$. Therefore we conclude that $\mathfrak{D}(2,\Omega) =2$.
\end{example}


\section{Determining the minimal domination completions of some uniform hypergraphs ${\cal U}_{r,\Omega}$}\label{mincomp}


Let $\Omega$ be a finite set of size $n$.
The study of the uniform hypergraphs of maximum size
will be completed with the computation
of the minimal domination completions of ${\cal U}_{r,\Omega}$ either when  $3\leq r\leq n-1$
or when $r=2$ and $n$ is odd. The description of the set $\DDom(r,\Omega)$ of all the minimal domination completions of ${\cal U}_{r,\Omega}$
is a problem which is far from being solved.
However, here we present the description in three cases.
Namely, the case $r=2$ and $n$ odd (Subsection~\ref{section2n}), the case $r=n-1$
(Subsection~\ref{sectionn-1n}), and the case $r$ arbitrary and $n\leq 5$ (Subsection~\ref{section135}).


\subsection{Minimal domination completions of ${\cal U}_{2,\Omega}$}\label{section2n}


From Proposition~\ref{domuniform.caract} and Theorem~\ref{thcompletacio} we get that $\DDom (2,\Omega)=\{{\cal U}_{2,\Omega}\}$ if and only if 
$\Omega$ has even size. In this subsection we determine the set $\DDom (2,\Omega)$ of the minimal domination completions 
of the uniform hypergraph ${\cal U}_{2,\Omega}$ whenever the finite $\Omega$ has odd size.

\begin{lemma}\label{lemacon}
Let $\Omega'$ be a non-empty subset of a finite set $\Omega$. 
Let ${\cal H}$ be a hypergraph on $\Omega$ and let ${\cal H}[\Omega']= \{ A\in \mathcal{H} : A\subseteq  \Omega' \}$.
Let $G'$ be a graph with vertex set $V(G')\subseteq \Omega'$, and suppose that ${\cal H}[\Omega']\neq \emptyset$. Then,
$ \mathcal{H}[\Omega'] \le \DD (G')$ if and only if $\mathcal{H}\le \DD (G'\vee K_{\Omega\setminus \Omega' }).$
\end{lemma}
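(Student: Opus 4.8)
The statement is an ``if and only if'' about how the order relation $\leqslant$ interacts with the join operation $\vee$. The natural tool is Lemma~\ref{lema.operacions}(2), which describes $\DD(G'\vee K_{\Omega\setminus\Omega'})$ explicitly: since $K_{\Omega\setminus\Omega'}$ is complete, every vertex $x$ in it has $N[x]=\Omega\setminus\Omega'$, so the only ``extra'' minimal dominating sets of size two contributed by the join involve vertices of $G'$ with $N[x]\neq V(G')$ paired with some vertex of the complete part; but such a pair $\{x_1,x_2\}$ always contains a singleton-dominating vertex only if $x_2$ dominates everything, which it does not here unless $|\Omega\setminus\Omega'|=1$. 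I would instead extract from Lemma~\ref{lema.operacions}(2) the cleaner consequence that I actually need: every element of $\DD(G'\vee K_{\Omega\setminus\Omega'})$ either is a member of $\DD(G')$, or is a set of size $\le 2$ meeting $\Omega\setminus\Omega'$. The first step of the proof is therefore to record this description precisely.

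\textbf{The forward direction.} Assume $\mathcal{H}[\Omega']\le\DD(G')$. I must show $\mathcal{H}\le\DD(G'\vee K_{\Omega\setminus\Omega'})$, i.e.\ by Lemma~\ref{lem.ordre}(1), that for every $A\in\mathcal{H}$ there is some $B\in\DD(G'\vee K_{\Omega\setminus\Omega'})$ with $B\subseteq A$. I split on whether $A\subseteq\Omega'$. If $A\subseteq\Omega'$ then $A\in\mathcal{H}[\Omega']$, so by hypothesis there is $D\in\DD(G')$ with $D\subseteq A$; and $D\in\DD(G'\vee K_{\Omega\setminus\Omega'})$ by the description above, so we are done. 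If $A\not\subseteq\Omega'$, pick $w\in A\cap(\Omega\setminus\Omega')$; then $\{w\}$ is a minimal dominating set of the join (the vertex $w$ of $K_{\Omega\setminus\Omega'}$ is adjacent to all other vertices of the join), hence $\{w\}\in\DD(G'\vee K_{\Omega\setminus\Omega'})$ and $\{w\}\subseteq A$. That settles the forward implication.

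\textbf{The reverse direction.} Assume $\mathcal{H}\le\DD(G'\vee K_{\Omega\setminus\Omega'})$; I want $\mathcal{H}[\Omega']\le\DD(G')$. Take $A\in\mathcal{H}[\Omega']$, so $A\subseteq\Omega'$. By hypothesis there is $B\in\DD(G'\vee K_{\Omega\setminus\Omega'})$ with $B\subseteq A\subseteq\Omega'$; in particular $B\cap(\Omega\setminus\Omega')=\emptyset$. By the description of the minimal dominating sets of the join, a set disjoint from $\Omega\setminus\Omega'$ must be one of the ``old'' minimal dominating sets, i.e.\ $B\in\DD(G')$ — this is the one point needing a careful check: I must rule out that $B$ is one of the extra size-$2$ sets of Lemma~\ref{lema.operacions}(2), which is immediate since every such extra set meets $V(G_{i_2})$ for $i_2$ indexing the complete part, hence meets $\Omega\setminus\Omega'$. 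So $B\in\DD(G')$ and $B\subseteq A$, giving $\mathcal{H}[\Omega']\le\DD(G')$ by Lemma~\ref{lem.ordre}(1).

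\textbf{Main obstacle.} The only genuinely delicate point is the precise form of $\DD(G'\vee K_{\Omega\setminus\Omega'})$ — specifically, being careful that all the ``new'' minimal dominating sets created by the join intersect $\Omega\setminus\Omega'$ (so that a subset of $\Omega'$ forces membership in $\DD(G')$), and that every singleton $\{w\}$ with $w\in\Omega\setminus\Omega'$ really is minimal dominating in the join. Both follow directly from Lemma~\ref{lema.operacions}(2) once one notes that in $K_{\Omega\setminus\Omega'}$ every vertex dominates the whole of that part; I would state this as a short preliminary observation and then the two implications are routine. One should also note the harmless edge case $\Omega\setminus\Omega'=\emptyset$ (then $G'\vee K_{\Omega\setminus\Omega'}=G'$ and the claim is trivial), and recall that the hypothesis $\mathcal{H}[\Omega']\neq\emptyset$ is what makes $\mathcal{H}[\Omega']$ a legitimate hypergraph so that $\le$ is defined on it.
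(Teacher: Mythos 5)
Your proof is correct and follows essentially the same route as the paper: both rest on Lemma~\ref{lema.operacions}(2) applied to the join with the complete graph on $\Omega\setminus\Omega'$, split the forward direction according to whether $A\subseteq\Omega'$, and in the reverse direction use that a minimal dominating set of the join contained in $\Omega'$ must lie in $\DD(G')$. The only cosmetic difference is that the paper records the exact identity $\DD(G'\vee K_{\Omega\setminus\Omega'})=\DD(G')\cup\{\{w\}: w\in\Omega\setminus\Omega'\}$ (no extra pairs arise, since every vertex of $K_{\Omega\setminus\Omega'}$ has closed neighborhood equal to that whole part), whereas you work with a slightly weaker description that still suffices.
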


\begin{proof}
First assuming that $\mathcal{H}[\Omega'] \le \DD (G')$
we are going to prove that $\mathcal{H}\le \DD (G'\vee K_{\{ \omega \} })$.
Recall that by Lemma~\ref{lema.operacions} we get that 
$\DD (G'\vee K_{\Omega\setminus \Omega' })=\DD(G')\cup \{ \{w\} : w\in \Omega\setminus \Omega'\}$.
Therefore, by applying Lemma~\ref{lem.ordre}, we must demonstrate that
if $A\in  \mathcal{H}$ then 
there exists $D\in \DD(G')\cup \{ \{w\} : w\in \Omega\setminus \Omega'\}$
such that $D\subseteq A$.
Let $A\in \mathcal{H}$.
If $A\not \subseteq \Omega'$, then there is $\omega \in A \cap (\Omega \setminus \Omega')$
and so we can set $D=\{\omega\}$.
Now assume that $A\subseteq \Omega'$. Then $A\in \mathcal{H}[\Omega']\le \DD (G')$, and hence
there exists $D'\in \DD (G')$ such that $D'\subseteq A$. Thus, in such a case, we can consider $D=D'$.

Now suppose that $\mathcal{H}\le \DD (G'\vee K_{ \Omega\setminus \Omega'})$. We want to prove that 
$\mathcal{H}[\Omega'] \le \DD (G')$; that is, we must demonstrate that if $A'\in \mathcal{H}[\Omega']$
then there exists $D'\in \DD(G')$ such that $D'\subseteq A'$.
Let $A'\in \mathcal{H}[\Omega']$.
Since $A'\in \mathcal{H}[\Omega']\subseteq {\cal H}$ and  ${\cal H}\leqslant \DD (G'\vee K_{\Omega \setminus \Omega' })$,
there exists $D\in \DD (G'\vee K_{\Omega \setminus \Omega'  })$ such that
$D\subseteq A'$. But $A'\subseteq \Omega' $ and, by Lemma~\ref{lema.operacions},
$\DD (G'\vee K_{\Omega\setminus \Omega' })=\DD(G')\cup \{ \{w\} : w\in \Omega\setminus \Omega'\}$.
Therefore $D\in \DD(G')$. Now the proof is completed by setting $D'=D$.
\end{proof}

\begin{theorem}\label{unidos}
Let $\Omega$ be a finite set of size $|\Omega|=n$.
Assume that $n$ is odd. Then the following statements hold:
\begin{enumerate}
\item For all $w\in \Omega$, the hypergraph ${\cal H}_{\omega}=\big \{\{\omega\}\big \}\cup {\cal U}_{2,\Omega\setminus\{\omega\}}$
is a domination hypergraph. Moreover, if $G$ is a graph with vertex set $\Omega$, then
$\DD(G)={\cal H}_{\omega}$ if and only if
$G=K_{ \{\omega\} }\vee G'$ where $G'$ is a realization of the domination hypergraph ${\cal U}_{2,\Omega\setminus\{\omega\}}$.
\item The uniform hypergraph
${\cal U}_{2,\Omega}$ has $n$ minimal domination completions. Namely,
if $\Omega=\{ \omega_1,\dots ,\omega_{n}\}$ then
$\DDom (2,\Omega)=\{\mathcal{H}_{\omega_1}, \dots , \mathcal{H}_{\omega_n}\}$.
\item If $w_{i_1}, w_{i_2}$ are distinct elements of $\Omega$, then $\{\mathcal{H}_{\omega_{i_1}},\mathcal{H}_{\omega_{i_2}}\}$ is a 
$2$-decomposition of ${\cal U}_{2,\Omega}$.
In particular, $\mathfrak{D} ( {2,\Omega} )=2$.
\end{enumerate}
\end{theorem}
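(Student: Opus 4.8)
The plan is to prove the three statements in order, using Lemma~\ref{lemacon} as the main tool to transfer the (known) analysis of the even case to the odd case.

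\medskip

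\textbf{Statement (1).} First I would apply Lemma~\ref{lemacon} with $\Omega'=\Omega\setminus\{\omega\}$ and ${\cal H}={\cal H}_\omega$. Note that $|\Omega\setminus\{\omega\}|=n-1$ is even, so by Proposition~\ref{domuniform.caract} the hypergraph ${\cal U}_{2,\Omega\setminus\{\omega\}}$ is a domination hypergraph; let $G'$ be any realization (explicitly, $G'$ is obtained from $K_{\Omega\setminus\{\omega\}}$ by deleting a perfect matching). Since $\DD(K_{\{\omega\}}\vee G')=\DD(G')\cup\{\{\omega\}\}$ by Lemma~\ref{lema.operacions}(2) (here $\omega$ is dominated only by itself in $G'$ because $\omega\notin V(G')$, but in the join $N[\omega]=\Omega$, so the only minimal dominating set from the "new" ones is $\{\omega\}$; more carefully, $\DD(K_{\{\omega\}}\vee G')= \{\{\omega\}\}\cup\DD(G')$ since every $D\in\DD(G')$ still dominates $\omega$ via the join edges), we get $\DD(K_{\{\omega\}}\vee G')={\cal H}_\omega$, so ${\cal H}_\omega$ is a domination hypergraph. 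For the "if and only if" characterization of realizations $G$ with $V(G)=\Omega$: if $\DD(G)={\cal H}_\omega$, then $\{\omega\}\in\DD(G)$ forces $N[\omega]=\Omega$, i.e.\ $\omega$ is adjacent to everything, so $G=K_{\{\omega\}}\vee G'$ with $G'=G-\omega$; then by Lemma~\ref{lemacon} (with ${\cal H}={\cal H}_\omega$, ${\cal H}[\Omega']={\cal U}_{2,\Omega\setminus\{\omega\}}$) we get ${\cal U}_{2,\Omega\setminus\{\omega\}}\le\DD(G')$, and since $G'$ has vertex set $\Omega\setminus\{\omega\}$ this forces $\DD(G')={\cal U}_{2,\Omega\setminus\{\omega\}}$ (equality, because ${\cal U}_{r,\Omega'}\le{\cal H}$ with ${\cal H}$ on the same ground set and $r$-uniform... actually one checks directly that $\DD(G')$ must be $2$-uniform: $tr(\DD(G'))=\mathcal{N}[G']=tr({\cal U}_{2,\Omega\setminus\{\omega\}})={\cal U}_{n-2,\Omega\setminus\{\omega\}}$, forcing regularity). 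The converse direction is immediate from the computation above.

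\medskip

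\textbf{Statement (2).} I would show ${\cal H}_{\omega_1},\dots,{\cal H}_{\omega_n}$ are exactly the minimal elements of $(\Dom(2,\Omega),\le)$. First, each ${\cal H}_{\omega_i}\in\Dom(2,\Omega)$: it is a domination hypergraph by (1), it has ground set $\Omega$, and ${\cal U}_{2,\Omega}\le{\cal H}_{\omega_i}$ because every $2$-subset of $\Omega$ either lies in $\Omega\setminus\{\omega_i\}$ (hence contains a member of ${\cal U}_{2,\Omega\setminus\{\omega_i\}}$, namely itself) or contains $\omega_i$ (hence contains $\{\omega_i\}$). Next, I would show no two are comparable: if $i\ne j$ then ${\cal H}_{\omega_i}\not\le{\cal H}_{\omega_j}$, since $\{\omega_i\}\in{\cal H}_{\omega_i}$ but no member of ${\cal H}_{\omega_j}$ is contained in $\{\omega_i\}$ (the singleton $\{\omega_j\}$ is not, as $j\ne i$; and no $2$-set is). Finally, minimality: given any ${\cal H}\in\Dom(2,\Omega)$, I must find some ${\cal H}_{\omega_i}\le{\cal H}$. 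Write ${\cal H}=\DD(G)$ with $V(G)=\Omega$. Since ${\cal U}_{2,\Omega}\le\DD(G)$, every $2$-subset contains a minimal dominating set, so $\DD(G)$ has no sets of size $\ge 3$ and at most... the key point: ${\cal U}_{2,\Omega}\le\DD(G)$ means $G$ has a dominating set of size $\le 2$ inside every pair, which forces either a singleton dominating set $\{\omega_i\}$ (then ${\cal H}_{\omega_i}\le\DD(G)$ needs checking) or every pair is dominating, which is only possible... Actually I'd argue: $tr({\cal U}_{2,\Omega})={\cal U}_{n-1,\Omega}$ and ${\cal U}_{2,\Omega}\le\DD(G)\iff \mathcal{N}[G]\le{\cal U}_{n-1,\Omega}$... let me instead argue combinatorially that ${\cal U}_{2,\Omega}\le\DD(G)$ forces $G$ to have at least one vertex of degree $n-1$ or to be $(n-2)$-regular, the latter being impossible for odd $n$; hence some $\omega_i$ has $N[\omega_i]=\Omega$, and then $G=K_{\{\omega_i\}}\vee(G-\omega_i)$ with ${\cal U}_{2,\Omega\setminus\{\omega_i\}}\le\DD(G-\omega_i)$ by Lemma~\ref{lemacon}, which gives ${\cal H}_{\omega_i}\le\DD(G)$ by another application of Lemma~\ref{lemacon} (taking ${\cal H}={\cal H}_{\omega_i}$).

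\medskip

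\textbf{Statement (3).} I would compute ${\cal H}_{\omega_{i_1}}\sqcap{\cal H}_{\omega_{i_2}}=\min\{A_1\cup A_2: A_k\in{\cal H}_{\omega_{i_k}}\}$ directly. The sets $A_1\cup A_2$ come in four types according to whether $A_k$ is the singleton or a $2$-set. The singleton--singleton case gives $\{\omega_{i_1},\omega_{i_2}\}$, a $2$-set; since $n\ge 3$ one checks every other union either contains this one or contains some other $2$-set, and conversely every $2$-subset $\{x,y\}$ of $\Omega$ arises as a minimal union (if $i_1\notin\{x,y\}$ take $A_1=\{x,y\}\in{\cal U}_{2,\Omega\setminus\{\omega_{i_1}\}}$ and shrink; if $i_1\in\{x,y\}$, say $x=\omega_{i_1}$, take $A_1=\{\omega_{i_1}\}$ and $A_2$ a $2$-set containing $y$ and avoiding $\omega_{i_2}$, possible as $n-1\ge 2$), so the minimal unions are exactly the $2$-subsets, i.e.\ ${\cal H}_{\omega_{i_1}}\sqcap{\cal H}_{\omega_{i_2}}={\cal U}_{2,\Omega}$. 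Hence $\{{\cal H}_{\omega_{i_1}},{\cal H}_{\omega_{i_2}}\}$ is a $2$-decomposition. Combined with $\mathfrak{D}(2,\Omega)\ge 2$ (which holds because ${\cal U}_{2,\Omega}$ is not a domination hypergraph for odd $n$, by Proposition~\ref{domuniform.caract}), this gives $\mathfrak{D}(2,\Omega)=2$.

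\medskip

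\textbf{Main obstacle.} The delicate step is the minimality argument in (2): showing that every domination completion $\DD(G)$ of ${\cal U}_{2,\Omega}$ lies above some ${\cal H}_{\omega_i}$. The heart of it is ruling out the possibility that $G$ is a "spread-out" graph with no universal vertex — this is where odd $n$ is essential, via the parity obstruction to $(n-2)$-regularity (equivalently, every minimal dominating set of $G$ having size $\ge 2$ would force $\mathcal{N}[G]={\cal U}_{n-1,\Omega}$ and hence $(n-2)$-regularity). Once a universal vertex is extracted, Lemma~\ref{lemacon} does the rest.
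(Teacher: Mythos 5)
Your proposal is correct, and for statements (1) and (3) it follows essentially the paper's own argument: realize ${\cal H}_\omega$ as $\DD(K_{\{\omega\}}\vee G')$ using Proposition~\ref{domuniform.caract} and Lemma~\ref{lema.operacions}, and prove (3) by computing the meet directly, with $\mathfrak{D}(2,\Omega)\ge 2$ coming from non-domination. The genuinely different step is the minimality argument in (2). The paper never passes to a graph realization there: since ${\cal U}_{2,\Omega}$ is not a domination hypergraph, any ${\cal H}\in\Dom(2,\Omega)$ satisfies ${\cal U}_{2,\Omega}\lneqq {\cal H}$; because every $2$-set contains a member of ${\cal H}$ and ${\cal H}$ is an antichain different from ${\cal U}_{2,\Omega}$, some singleton $\{\omega_{i_0}\}$ must belong to ${\cal H}$, and then ${\cal H}_{\omega_{i_0}}\leqslant {\cal H}$ is immediate from Lemma~\ref{lem.ordre}. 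You instead write ${\cal H}=\DD(G)$, note that every $2$-set being dominating forces minimum degree at least $n-2$, use parity to extract a universal vertex, and transfer through Lemma~\ref{lemacon}. This is also valid and makes the role of odd $n$ visible at the graph level, but it is longer and re-derives inside the proof the parity obstruction that the paper has already packaged into Proposition~\ref{domuniform.caract}; the paper's version needs no use of Lemma~\ref{lemacon} at all.

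Two small repairs are needed in your write-up. In the converse of (1), your argument that ${\cal U}_{2,\Omega\setminus\{\omega\}}\leqslant \DD(G-\omega)$ upgrades to equality is circular as written: the identity ${\cal N}[G-\omega]=tr({\cal U}_{2,\Omega\setminus\{\omega\}})$ presupposes $\DD(G-\omega)={\cal U}_{2,\Omega\setminus\{\omega\}}$, which is what you are proving. The clean route (the paper's) is to use Lemma~\ref{lema.operacions} once $\omega$ is known to be universal: $\DD(G)=\{\{\omega\}\}\cup\DD(G-\omega)$, and since no member of $\DD(G-\omega)$ or of ${\cal U}_{2,\Omega\setminus\{\omega\}}$ contains $\omega$, comparison with ${\cal H}_\omega$ gives $\DD(G-\omega)={\cal U}_{2,\Omega\setminus\{\omega\}}$ outright. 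Likewise, in (3), when $x=\omega_{i_1}$ you must choose $A_2$ inside $\{x,y\}$, namely $A_2=\{\omega_{i_2}\}$ if $y=\omega_{i_2}$ and $A_2=\{x,y\}$ otherwise; an arbitrary $2$-set containing $y$ and avoiding $\omega_{i_2}$ need not give $A_1\cup A_2=\{x,y\}$. Both fixes are routine and do not affect the soundness of your overall plan.
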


\begin{proof} 
Let $w\in \Omega$.
Since $\Omega\setminus\{\omega\}$ has even size,
the hypergraph ${\cal U}_{2,\Omega\setminus\{\omega\}}$ is a domination hypergraph, and so there exists
a graph $G_0'$ with vertex set $V(G'_0)=\Omega\setminus\{\omega\}$ such that
${\cal U}_{2,\Omega\setminus\{\omega\}}=\DD(G'_0)$.
Let $G_0$ be the join graph $G_0=K_{ \{\omega \} }\vee G'_0$. Then $G_0$ is a graph with vertex set $\Omega$ and minimal dominating sets
$\DD(G_0)=\DD (K_{ \{\omega\} }\vee G'_0)= \big \{\{\omega\}\big \}\cup \DD (G'_0)
=\big \{\{\omega\}\big \}\cup {\cal U}_{2,\Omega\setminus\{\omega\}}={\cal H}_{\omega}$. 
So ${\cal H}_{\omega}$ is a domination hypergraph.

To conclude the proof of the first statement we must demonstrate that if $G$ is a graph with
$\DD(G)=\big \{ \{\omega\}\big \}\cup {\cal U}_{2,\Omega\setminus\{\omega\}}$, then
$G=K_{ \{\omega\} }\vee G'$ for some graph $G'$ with vertex set $\Omega\setminus \{w\}$ and  minimal 
dominating sets $\DD(G')={\cal U}_{2,\Omega\setminus\{\omega\}}$.
Let $G'=G-\omega$ be the graph obtained by deleting the vertex $\omega$ from $G$.
Since $\{ \omega \} \in \DD (G)$, the vertex $\omega $ is universal in $G$ and so
$G=K_{\{ \omega \}} \vee (G-\omega)$.
Moreover, from Lemma~\ref{lema.operacions} we have
$\DD(G)= \big \{ \{\omega\} \big \}\cup \DD (G')$. Thus we conclude that  $\DD(G')={\cal U}_{2,\Omega\setminus\{\omega\}}$.
This completes the proof of the first statement.

Next we are going to prove the second statement; that is, we must demonstrate that $\mathcal{H}_{\omega_1}, \dots , \mathcal{H}_{\omega_n}$ are the minimal
domination completions of ${\cal U}_{2,\Omega}$.

Let $1\leq i \leq n$. It is clear that ${\cal U}_{2,\Omega}\leqslant \big \{\{\omega_i\}\big \}\cup {\cal U}_{2,\Omega\setminus\{\omega_i\}}$; that is,
${\cal U}_{2,\Omega}\leqslant {\cal H}_{\omega_i}$.
Moreover, from statement (1) the hypergraph
${\cal H}_{\omega_i}$ is a domination hypergraph. So
${\cal H}_{\omega_1}, \dots , {\cal H}_{\omega_n}$ are domination completions of ${\cal U}_{2,\Omega}$;
that is, $\{\mathcal{H}_{\omega_{1}}, \dots , \mathcal{H}_{\omega_{n}}\} \subseteq \Dom (2,\Omega)$.

Now let us prove that  $\DDom (2,\Omega) \subseteq \{\mathcal{H}_{\omega_{1}}, \dots , \mathcal{H}_{\omega_{n}}\}$. In order to do this it is enough
to show that if $\mathcal{H}$ is a domination completion of ${\cal U}_{2,\Omega}$,
then there exists $i_0$ such that $\mathcal{H}_{\omega_{i_0}}\leqslant \mathcal{H}$.
Let $\mathcal{H}$ be  a domination completion of ${\cal U}_{2,\Omega}$.
Recall that ${\cal U}_{2,\Omega}$
is not a domination hypergraph.
So ${\cal U}_{2,\Omega}\lneqq \mathcal{H}$ and hence, since the hypergraph ${\cal U}_{2,\Omega}$ consists of
all subsets $A\subseteq \Omega$
of size $|A|=2$, there exists $\omega_{i_0}\in \Omega$ such that $\{ \omega_{i_0} \}\in \mathcal{H}$.
Therefore we have that ${\cal U}_{2,\Omega}\leqslant \mathcal{H}$ and that $\{ \omega_{i_0} \}\in \mathcal{H}$,
and so $\big \{\{\omega_{i_0}\}\big \}\cup {\cal U}_{2,\Omega\setminus\{\omega_{i_0}\}} \leqslant {\cal H}$,
that is, ${\cal H}_{\omega_{i_0}}\leqslant {\cal H}$.

From the above we have that $\DDom (2,\Omega)=\min \{\mathcal{H}_{\omega_{1}}, \dots , \mathcal{H}_{\omega_{n}}\}$.
Observe that if $i\neq j$ then $\mathcal{H}_{\omega_{i}} \not \leqslant \mathcal{H}_{\omega_{j}}$. So,
$\DDom (2,\Omega)=\{\mathcal{H}_{\omega_{1}}, \dots , \mathcal{H}_{\omega_{n}}\}$.
This completes the proof of the second statement.

To complete the proof of the proposition we must prove
that if $\omega_1\neq \omega_2$, then $\{\mathcal{H}_{\omega_1},\mathcal{H}_{\omega_2}\}$ is a $2$-decomposition of ${\cal U}_{2,\Omega}$;
that is, we must demonstrate that
${\cal U}_{2,\Omega}= \mathcal{H}_{\omega_1}\sqcap \mathcal{H}_{\omega_2}$.
Since $\mathcal{H}_{\omega_{i}}=\big \{\{\omega_{i}\}\big \}\cup {\cal U}_{2,\Omega\setminus\{\omega_{i}\}}$,
the union $A_1\cup A_2$ has at least size two whenever
$A_1\in \mathcal{H}_{\omega_1}$ and $A_2\in \mathcal{H}_{\omega_2}$ if $w_1\not= w_2$.
Moreover, it is clear that every subset $\{ \omega_{k} , \omega_{\ell}\}$ with $\omega_{k} \not= \omega_{\ell}$
can be obtained as $A_1\cup A_2$, for some $A_1\in \mathcal{H}_{\omega_1}$ and $A_2\in \mathcal{H}_{\omega_2}$.
Hence we conclude that $\mathcal{H}_{\omega_1}\sqcap \mathcal{H}_{\omega_2}={\cal U}_{2,\Omega}$.
\end{proof}


\subsection{Minimal domination completions of ${\cal U}_{n-1,\Omega}$}\label{sectionn-1n}


From Proposition~\ref{domuniform.caract} we get that if $\Omega$ has size $n\geq 3$, then
the hypergraph ${\cal U}_{n-1,\Omega}$ is not a domination hypergraph. The goal of this subsection is to provide a complete description
of the set $\DDom (n-1,\Omega)$ of the minimal domination completions of ${\cal U}_{n-1,\Omega}$ (Theorem~\ref{thm.nmenysu}), and to 
display their graph realizations (Proposition~\ref{prop.realitzacions}). In addition, we present an upper bound
for the decomposition parameter 
$\mathfrak{D} (n-1,\Omega)$ (Proposition~\ref{thm.dfrac.nmenysu}). Up to now, the computation of the exact value of this 
parameter remains as
an open problem.

In order to prove our results we will use the following five technical lemmas.  
Three of these lemmas are concerned with graphs that are
disjoint union of stars; whereas the other two lemmas
involve some properties of the partial order $\leqslant$.

A tree $T$ of order $n\ge 2$ is a \emph{star} if it is isomorphic to the complete bipartite graph $K_{1,n-1}$.
Observe that a tree $T$ of order $n\ge 2$ is a star if and only if $T$ has at
most one vertex of degree at least 2, the \emph{center} of the star.
If a star $T$ has no vertices of degree at least 2, then $T$ is isomorphic to $K_2$ and both vertices can be considered as 
the center of the star.
Stars can also be characterized as non-empty connected graphs such that all its edges are incident to a \emph{leaf}, 
that is, a vertex of degree 1.
It is clear that every graph without isolated vertices and such that all its edges have at least
one endpoint of degree $1$ is a disjoint union of stars. The following result is a direct consequence of this fact.

\begin{lemma}\label{lem.entornestrelles} 
Let $G$ be a graph without isolated vertices. Then,
$G$ is a disjoint union of stars if and only if  ${\cal N}[G]=E(G)$.
\end{lemma}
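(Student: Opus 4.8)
The plan is to prove Lemma~\ref{lem.entornestrelles} by unwinding both directions in terms of the combinatorial characterization of stars recalled just above, namely that a connected non-empty graph is a star if and only if every edge is incident to a leaf, and that a graph without isolated vertices all of whose edges have a degree-$1$ endpoint is a disjoint union of stars. Throughout, recall that $\mathcal{N}[G]=\min(N[G])$ is the antichain of inclusion-minimal closed neighborhoods, while $E(G)$ is a family of $2$-element sets, so the statement $\mathcal{N}[G]=E(G)$ secretly packs in two claims: that every minimal closed neighborhood has size exactly $2$ (hence is an edge), and that every edge actually occurs as some minimal closed neighborhood.

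First I would prove the forward direction. Assume $G$ is a disjoint union of stars and has no isolated vertices. Take any vertex $x$; it lies in some star component, and since that component is a star, $x$ is either a leaf or the center. If $x$ is a leaf adjacent to a vertex $y$, then $N[x]=\{x,y\}$, a set of size $2$. If $x$ is the center of a star with at least one leaf $\ell$ (which exists since there are no isolated vertices), then $N[x]$ contains $x$, $\ell$, and $N[\ell]=\{\ell,x\}\subseteq N[x]$, so $N[x]$ is not inclusion-minimal unless $N[x]=\{x,\ell\}$, i.e. the component is a single edge. In every case the minimal closed neighborhoods are precisely the closed neighborhoods of leaves, each of which equals $\{\ell,c\}$ where $c$ is the center adjacent to $\ell$; and as $\ell$ ranges over all leaves these sets are exactly the edges of $G$ (every edge of a disjoint union of stars joins a leaf to its center, and every leaf gives its unique incident edge). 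Hence $\mathcal{N}[G]=E(G)$. I would present this by arguing componentwise: it suffices to check $\mathcal{N}[K_{1,k}]=E(K_{1,k})$ for each $k\ge 1$, since $N[G]$ is the disjoint collection over components and inclusion-minimality is preserved.

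For the converse, assume $\mathcal{N}[G]=E(G)$ and that $G$ has no isolated vertices; I want to show every edge has a leaf endpoint, which by the cited fact forces $G$ to be a disjoint union of stars. Suppose for contradiction some edge $e=\{x,y\}$ has both $\deg(x)\ge 2$ and $\deg(y)\ge 2$. The hypothesis says $e\in\mathcal{N}[G]$, so $e=N[z]$ for some vertex $z$; but then $|N[z]|=2$ forces $\deg(z)=1$ and $z\in\{x,y\}$, contradicting $\deg(x),\deg(y)\ge 2$. Therefore every edge of $G$ has an endpoint of degree $1$, and since $G$ has no isolated vertices, $G$ is a disjoint union of stars.

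The routine subtlety — and the only thing needing care — is the bookkeeping in the forward direction when a star component is a single edge $K_2$: both endpoints are leaves, $N[x]=N[y]=\{x,y\}$, and this single set is simultaneously the unique edge and the unique minimal closed neighborhood of that component, so the equality $\mathcal{N}[G]=E(G)$ still holds with multiplicities collapsing correctly; I would just remark this case explicitly. No deeper obstacle is expected: the lemma is essentially a restatement of the edge-incident-to-a-leaf characterization of stars in the language of closed neighborhoods, and once one observes that membership of an edge $\{x,y\}$ in $\mathcal{N}[G]$ is equivalent to one of $x,y$ being a leaf adjacent to the other, both implications fall out immediately.
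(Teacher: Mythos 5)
Your proof is correct and follows essentially the same route as the paper: in the forward direction you identify the inclusion-minimal closed neighborhoods as exactly those of the leaves (hence the edges), and in the converse you observe that an edge occurring as some $N[z]$ forces $\deg(z)=1$, so every edge has a leaf endpoint and the cited characterization of disjoint unions of stars applies. The extra remark about the $K_2$ components is a harmless elaboration of the same argument.
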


\begin{proof}
Suppose first that $G$ is a disjoint union of stars.
If $x$ is a leaf, then $N[x]=\{ x,y \} \in E(G)$,
whereas if $x$ is a vertex of degree $r\ge 2$, then $N[x]=\{ x,y_1,\dots ,y_r \}$  where $y_1,\dots ,y_r$ are the leaves hanging from $x$.
Therefore, we conclude that ${\cal N}[G]=\{ N[x] : x\textrm{ is a leaf } \}$. So, ${\cal N}[G]=E(G)$.

Now suppose that ${\cal N}[G]=E(G)$.
Then, every edge has an endpoint of degree 1, because it is the neighborhood of some vertex. Therefore, $G$ is a disjoint union of stars.
\end{proof}

\begin{lemma} \label{prop.subgrafstars}
Every graph $G$ without isolated vertices contains a spanning subgraph
that is a disjoint union of stars.
\end{lemma}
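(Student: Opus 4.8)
The plan is to prove the lemma by a minimality (extremal) argument rather than by an explicit construction. Among all spanning subgraphs $H$ of $G$ having no isolated vertices, I would fix one, say $H$, with the smallest possible number of edges; this collection is non-empty since $G$ itself qualifies. I then claim that such a minimal $H$ is automatically a disjoint union of stars, which proves the statement because $H$ is by construction spanning.

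The first step is to check that $H$ is a forest. If $H$ contained a cycle, then every vertex on that cycle would have degree at least $2$ in $H$; deleting one edge $e$ of the cycle would therefore leave both endpoints of $e$ with degree at least $1$ and would not affect any other vertex, so $H-e$ would still be a spanning subgraph of $G$ with no isolated vertex but with fewer edges, contradicting the choice of $H$. Hence $H$ is a forest, and since it has no isolated vertices each of its connected components is a tree on at least two vertices.

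The second step is to show that every component $T$ of $H$ is a star. Here I would use the elementary fact that a tree is a star precisely when its diameter is at most $2$, i.e.\ precisely when it contains no path on four vertices. If some component $T$ were not a star, I would pick a path $v_0v_1v_2v_3$ inside $T$ and delete the middle edge $\{v_1,v_2\}$; removing an edge from a tree splits it into two subtrees, one containing $\{v_0,v_1\}$ and the other containing $\{v_2,v_3\}$, so each has at least two vertices and hence no isolated vertex, while no vertex outside $T$ is touched. Thus $H-\{v_1,v_2\}$ would again be a spanning subgraph of $G$ with no isolated vertex and strictly fewer edges, contradicting minimality. Consequently each component of $H$ is a star, so $H$ is the desired spanning disjoint union of stars.

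The argument is entirely elementary and I do not anticipate a genuine obstacle; the only points worth isolating are the characterization of stars among trees (diameter at most $2$, equivalently no $P_4$), which can either be invoked as well known or dispatched in a line, and the routine verifications that the indicated edge deletions never create an isolated vertex, sketched above.
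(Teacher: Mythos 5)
Your proof is correct, but it takes a different route from the paper. The paper reduces to connected graphs and argues by induction on the order: it takes a spanning tree, and if that tree is not a star it deletes an edge on the path between two vertices of degree at least $2$, splitting into two smaller trees to which the inductive hypothesis applies. You instead run an extremal argument: among all spanning subgraphs of $G$ without isolated vertices you pick one, $H$, with fewest edges, show it is a forest (a cycle edge could be deleted without creating an isolated vertex), and then show each component is a star via the characterization of stars among trees on at least two vertices as the trees of diameter at most $2$ (equivalently, containing no path on four vertices), since deleting the middle edge of such a path leaves two subtrees each on at least two vertices. The underlying local move -- deleting an edge both of whose endpoints keep a neighbour -- is the same in both arguments, but your packaging avoids the reduction to connected graphs and the induction, and makes the "disjoint union of stars" structure fall out of minimality in one step; the paper's version is a recursive construction that produces the spanning star forest explicitly. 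Both are complete; the only facts you invoke beyond the deletion checks (the star/diameter characterization and that a connected graph on at least two vertices has no isolated vertex) are elementary and correctly stated.
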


\begin{proof}
It is sufficient to prove that the statement holds for connected graphs $G$ of order $n\ge 2$.
We proceed by induction on $n$.
The result is trivial for $n=2$. Now assume that $G$ is a connected graph of order $n\ge 3$.
Consider a spanning tree $T$ of $G$. If $T$ is a star, then the result follows.
So we may assume that $T$ is not a star. In such a case 
$T$ has at least two vertices of degree $\ge 2$. Consider an edge of the path joining these two vertices.
By removing this edge, we obtain two trees $T_1$ and $T_2$ of order at least $2$
and without isolated vertices. By inductive hypothesis, both trees contain a spanning subgraph
that is a disjoint union of stars. To finish observe that the union of those subgraphs
is a spanning subgraph of $G$ that is a disjoint union of stars.
\end{proof}

\begin{lemma}\label{lem.domuniostars}
Let $G$ be the disjoint union of the stars $S_1, \dots, S_r$. 
Then, $G$ has exactly $2^r$ minimal dominating sets. Namely, the minimal dominating sets of $G$ are the sets of vertices of the form 
$$\Big \{c_j : j\in J \, \Big \}\cup \Big (\bigcup_{i\in \{1,\dots ,r\}\setminus J} L_i \, \Big )$$
where $J\subseteq \{ 1, \dots ,r \}$,
and where $c_i$ and $L_i$
are respectively the center and the set of leaves of the star $S_i$ (whenever $S_i$
is isomorphic to $K_2$, choose one of the two vertices as the
center and the other as the leaf).
\end{lemma}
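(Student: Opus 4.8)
The plan is to prove the formula for $\DD(G)$ when $G=S_1+\cdots+S_r$ is a disjoint union of stars by first reducing to a single star and then applying the disjoint-union rule from Lemma~\ref{lema.operacions}. First I would handle the base case $r=1$: if $S$ is a star with center $c$ and leaf set $L$, I claim $\DD(S)=\{\{c\},L\}$. Indeed, $\{c\}$ is dominating since $c$ is adjacent to every leaf, and it is minimal since it is a singleton; $L$ is dominating since $c$ is the unique non-leaf and it is adjacent to some (in fact every) leaf, and $L$ is minimal because removing any leaf $\ell$ leaves $\ell$ undominated (its only neighbor is $c\notin L$, and $\ell\notin L\setminus\{\ell\}$). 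Conversely, if $D$ is a minimal dominating set of $S$ and $c\notin D$, then every leaf must dominate itself, forcing $L\subseteq D$, and minimality gives $D=L$; if $c\in D$, then $\{c\}$ is already dominating, so minimality gives $D=\{c\}$. This settles the case of a single star, and in particular shows a star has exactly $2$ minimal dominating sets. The parenthetical convention about $K_2$ is harmless: when $S_i\cong K_2$ the two choices of center give the two minimal dominating sets $\{c_i\}$ and $L_i=\{\ell_i\}$, which are exactly the two singletons, matching the formula.

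Next I would invoke Lemma~\ref{lema.operacions}(1), which states that $\DD(S_1+\cdots+S_r)=\{D_1\cup\cdots\cup D_r : D_i\in\DD(S_i)\}$. Since each $\DD(S_i)=\{\{c_i\},L_i\}$ has exactly two elements, a choice of $D_i$ for each $i$ amounts to choosing, for each $i$, either $\{c_i\}$ or $L_i$. Encoding the set of indices where we pick the center by $J\subseteq\{1,\dots,r\}$, the corresponding union is exactly
\[
\Big\{c_j : j\in J\Big\}\cup\Big(\bigcup_{i\in\{1,\dots,r\}\setminus J} L_i\Big),
\]
which is the formula in the statement. Thus $\DD(G)$ is precisely the collection of these sets as $J$ ranges over all subsets of $\{1,\dots,r\}$.

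Finally, for the count $2^r$ I would argue that distinct subsets $J$ give distinct dominating sets, so that the $2^r$ choices yield $2^r$ distinct minimal dominating sets. This is where a small amount of care is needed, and it is the only genuine (if minor) obstacle: one must observe that, because the $S_i$ have pairwise disjoint vertex sets, from a dominating set $D$ of the form above one recovers $J$ as $\{\,i : D\cap V(S_i)=\{c_i\}\,\}$. The one degenerate point is a star $S_i\cong K_2$ with the fixed choice of center $c_i$ and leaf $\ell_i$: here $\{c_i\}$ and $L_i=\{\ell_i\}$ are genuinely different subsets of $V(S_i)$ (they are the two distinct singletons), so $i\in J$ versus $i\notin J$ still produces different sets. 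For $S_i$ with at least two leaves, $\{c_i\}$ is a singleton and $L_i$ has size $\ge 2$, so they differ a fortiori. Hence the map $J\mapsto D_J$ is injective, giving exactly $2^r$ minimal dominating sets and completing the proof.
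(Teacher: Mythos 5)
Your proof is correct and follows essentially the same route as the paper: determine the two minimal dominating sets of a single star (handling the $K_2$ case via the chosen center/leaf convention) and then combine them with the disjoint-union formula of Lemma~\ref{lema.operacions}(1). Your extra verification of the single-star case and of the injectivity of $J\mapsto D_J$ only spells out details the paper treats as immediate.
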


\begin{proof}
It is clear that a star $S$ has exactly two minimal dominating sets. Namely,  if the star $S$ is not isomorphic $K_2$, then the minimal dominating sets of $S$
are the set of leaves and the set containing only
the center; whereas if the star $S$ is isomorphic to $K_2$, then the minimal dominating sets of $S$
are the sets containing exactly one vertex.
Now, the result follows by applying Lemma~\ref{lema.operacions} 
because if $G$ is the disjoint union of the stars $S_1,\dots ,S_r$
then $\DD (G)=\{ D_1 \cup \cdots \cup D_r : D_i\in \DD (S_i)\}$.
\end{proof}

\begin{lemma}\label{lem.spanningmenor}
Let $G'$ be a spanning subgraph of $G$. Then, $\DD (G')\le \DD(G)$.
\end{lemma}

\begin{proof}
From $V(G)=V(G')$ and $E(G')\subseteq E(G)$, we have that
every dominating set of $G'$ is also a dominating set of $G$. In particular, if $D'\in \DD(G')$ then $D'$
contains a minimal dominating set $D$ of $G$. Therefore, the inequality $\DD (G')\le \DD(G)$ holds.
\end{proof}

\begin{lemma}\label{lem.hipentorns}
If ${\cal H}$ and ${\cal H}'$ are hypergraphs such that
${\cal H}\le {\cal H}'$, then ${tr}({\cal H}')\le {tr}({\cal H})$.
\end{lemma}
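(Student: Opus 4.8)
Recall that the transversal map satisfies $tr(tr({\cal A}))={\cal A}$ (Berge), so it suffices to show one direction of the implication, as the other follows by applying the forward direction to $tr({\cal H})$ and $tr({\cal H}')$ and using involutivity. So the whole proof reduces to: assuming ${\cal H}\leqslant {\cal H}'$, deduce $tr({\cal H}')\leqslant tr({\cal H})$.

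First I would unwind what ${\cal H}\leqslant {\cal H}'$ means using Lemma~\ref{lem.ordre}(1): for every $A\in{\cal H}$ there exists $A'\in{\cal H}'$ with $A'\subseteq A$. Then, to prove $tr({\cal H}')\leqslant tr({\cal H})$, again by Lemma~\ref{lem.ordre}(1), I must show that for every $B\in tr({\cal H})$ there exists $B'\in tr({\cal H}')$ with $B'\subseteq B$. The natural candidate is to work with $B$ itself: I claim $B$ is a transversal of ${\cal H}'$, i.e.\ $B\cap A'\neq\emptyset$ for all $A'\in{\cal H}'$, from which it follows that $B$ contains some inclusion-minimal transversal $B'\in tr({\cal H}')$, giving $B'\subseteq B$ as required. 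To verify the claim, fix $A'\in{\cal H}'$; since ${\cal H}$ is the antichain of minimal elements of ${\cal H}'^+$ (using ${\cal H}^+\subseteq{\cal H}'^+$ and that each element of ${\cal H}'$ is minimal in ${\cal H}'^+$, hence sits above some element of... wait, one must be slightly careful here). The clean route is: $A'\in{\cal H}'\subseteq{\cal H}'^+$; but ${\cal H}^+\subseteq{\cal H}'^+$ need not put $A'$ above an element of ${\cal H}$. Instead I use the equivalent formulation directly — take any $A\in{\cal H}$; by ${\cal H}\leqslant{\cal H}'$ there is $A_0'\in{\cal H}'$ with $A_0'\subseteq A$; since $B$ meets every member of ${\cal H}$, in particular $B\cap A\neq\emptyset$. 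That does not immediately give $B\cap A'\neq\emptyset$ for arbitrary $A'$.

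The correct argument is the other way around and cleaner: I should instead show that every transversal of ${\cal H}$ is a transversal of ${\cal H}'$. Let $B\in tr({\cal H})$ and let $A'\in{\cal H}'$ be arbitrary. I want $B\cap A'\neq\emptyset$. Since ${\cal H}=\min({\cal H}^+)$ and ${\cal H}^+\subseteq{\cal H}'^+$, and since $B$ is a transversal of ${\cal H}$ it is also a transversal of ${\cal H}^+$ (any superset of a set met by $B$ is met by $B$), hence $B$ meets every member of ${\cal H}^+$, hence every member of ${\cal H}'^+\supseteq$... no. Here is the genuinely right observation: $B$ being a transversal of ${\cal H}$ means $B$ meets every element of ${\cal H}^+$. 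We want $B$ meets every element of ${\cal H}'$. Take $A'\in{\cal H}'$. Since $A'\in{\cal H}'^+$ but we only know ${\cal H}^+\subseteq{\cal H}'^+$, this doesn't help. So the hypothesis must be used in its ``for all $A\in{\cal H}$, exists $A'\subseteq A$'' form, which actually says ${\cal H}'$ is ``below'', meaning transversals shrink. The honest conclusion: this lemma as stated requires that every $A'\in{\cal H}'$ lies \emph{above} something comparable — and indeed the right proof is: for $A'\in{\cal H}'$, the complement trick. A transversal $B$ of ${\cal H}$ fails to meet $A'$ iff $\Omega\setminus A'\supseteq$ some member of ${\cal H}$'s ``blocking'' structure; using $tr$ involutive, $B$ meets all of ${\cal H}$ iff $B\in tr({\cal H})^+$ iff $B$ contains a member of $tr({\cal H})$. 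I would therefore phrase it via: ${\cal H}\leqslant{\cal H}'\iff{\cal H}^+\subseteq{\cal H}'^+$; taking complements pointwise, ${\cal H}^+\subseteq{\cal H}'^+$ is equivalent to $\{\Omega\setminus X: X\notin{\cal H}'^+\}\subseteq\{\Omega\setminus X:X\notin{\cal H}^+\}$, and $X\notin{\cal H}^+$ iff $\Omega\setminus X\in tr({\cal H})^+$ (standard blocker duality). Chasing this through gives $tr({\cal H}')^+\subseteq tr({\cal H})^+$, i.e.\ $tr({\cal H}')\leqslant tr({\cal H})$.

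\textbf{The main obstacle} will be getting the blocker/complement duality identity ``$X\notin{\cal H}^+ \iff \Omega\setminus X \in tr({\cal H})^+$'' stated and justified crisply — it is the fact that a set $X$ fails to contain any edge of ${\cal H}$ exactly when its complement is a transversal of ${\cal H}$. Once that identity is in hand, the lemma is immediate by taking complements of the inclusion ${\cal H}^+\subseteq{\cal H}'^+$. I expect the write-up to be four or five lines: state the duality identity (citing \cite{B89} if desired), apply it to both ${\cal H}$ and ${\cal H}'$, and combine with the hypothesis ${\cal H}^+\subseteq{\cal H}'^+$ to conclude $tr({\cal H}')^+\subseteq tr({\cal H})^+$.
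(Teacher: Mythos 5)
Your final argument is correct, but it is not the paper's route. You prove the lemma by complementation: the identity ``$X\notin{\cal H}^+$ if and only if $\Omega\setminus X\in tr({\cal H})^+$'' (a set contains no member of ${\cal H}$ exactly when its complement meets every member of ${\cal H}$) turns the hypothesis ${\cal H}^+\subseteq{\cal H}'^+$ directly into $tr({\cal H}')^+\subseteq tr({\cal H})^+$, which is the conclusion. That is sound, and it makes the blocker duality (and the involutivity of $tr$) transparent, but it needs the duality identity as an extra ingredient to state and verify. The paper instead gives a three-line element chase: given $X'\in tr({\cal H}')$, it suffices by Lemma~\ref{lem.ordre} to show that $X'$ meets every $A\in{\cal H}$, since then $X'$ contains some $X\in tr({\cal H})$; and for $A\in{\cal H}$ the hypothesis provides $A'\in{\cal H}'$ with $A'\subseteq A$, so $X'\cap A'\neq\emptyset$ forces $X'\cap A\neq\emptyset$.

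One remark on the detour in the middle of your write-up: the first attempt stalled because you unwound the target inequality in the wrong direction. By Lemma~\ref{lem.ordre}(1), $tr({\cal H}')\leqslant tr({\cal H})$ means that every $X'\in tr({\cal H}')$ contains some element of $tr({\cal H})$, not that every $B\in tr({\cal H})$ contains an element of $tr({\cal H}')$. The intermediate claim you (rightly) abandoned, namely that each $B\in tr({\cal H})$ is a transversal of ${\cal H}'$, is genuinely false under the hypothesis (take ${\cal H}=\{\{1\}\}$ and ${\cal H}'=\{\{1\},\{2\}\}$), whereas the analogous claim in the correct direction, that each $X'\in tr({\cal H}')$ is a transversal of ${\cal H}$, is exactly the paper's one-step argument. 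So the ``natural candidate'' you discarded does work once the quantifiers are set up correctly, and it yields a shorter proof than the complementation route you settled on.
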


\begin{proof}
Let $X'\in  {tr}({\cal H}')$. We want to prove that there exists $X\in  {tr}({\cal H})$ such that $X\subseteq X'$.
To do this, it is enough to
demonstrate that $X'\cap A\not= \emptyset$ for every $A\in {\cal H}$.
Let $A\in {\cal H}$. Since ${\cal H}\le {\cal H}'$, there exists $A'\in {\cal H}'$ such that $A'\subseteq A$.
By assumption $X'\in  {tr}({\cal H}')$. So $X'\cap A'\not= \emptyset$
and thus $X'\cap A\not= \emptyset$, as we wanted to prove.
\end{proof}

Now, by using these lemmas, we are going to prove the following theorem 
which provides a complete description of all the minimal domination completions of the uniform hypergraph ${\cal U}_{n-1,\Omega}$.

\begin{theorem}\label{thm.nmenysu}
Let $\Omega$ be a finite set of size $n\geq 3$. Then, the minimal domination completions of ${\cal U}_{n-1,\Omega}$ are the domination hypergraphs
${\cal H}$ of the form ${\cal H}=\DD(G)$ where $G$ is a disjoint union of stars; that is, 
$$\DDom (n-1,\Omega) =\{ \DD(G) : G\textrm{ is a disjoint union of stars with vertex set } \Omega \}.$$
\end{theorem}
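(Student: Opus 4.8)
The plan is to prove the set equality in two steps: first show that every hypergraph of the form $\DD(G)$, where $G$ is a disjoint union of stars with vertex set $\Omega$, lies in $\DDom(n-1,\Omega)$; and then show that these exhaust all the minimal domination completions. The first inclusion itself splits naturally into two parts: membership in $\Dom(n-1,\Omega)$, and minimality in the poset $(\Dom(n-1,\Omega),\leqslant)$.

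To see that $\DD(G)\in\Dom(n-1,\Omega)$ when $G$ is a disjoint union of stars with vertex set $\Omega$, I would verify ${\cal U}_{n-1,\Omega}\leqslant\DD(G)$ using Lemma~\ref{lem.ordre}(1): given any $(n-1)$-subset $A=\Omega\setminus\{\omega\}$, I must exhibit a minimal dominating set of $G$ contained in $A$. Using the explicit description of $\DD(G)$ from Lemma~\ref{lem.domuniostars}, I pick the index $i$ of the star $S_i$ containing $\omega$ and form the minimal dominating set that takes the set of leaves in $S_i$ if $\omega$ is the center (so $\omega$ is excluded), or the center of $S_i$ if $\omega$ is a leaf (again $\omega$ excluded) — in either case, take centers for all stars not containing $\omega$ except $S_i$ is handled as above; more simply, there is always a choice in $\DD(G)$ that omits the single vertex $\omega$, because each star has two minimal dominating sets whose intersection is empty (for $K_2$) or misses any prescribed vertex. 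I would phrase this cleanly via the observation that $V_0(G)=\emptyset$ and, for each $\omega$, the two minimal dominating sets of the star containing $\omega$ can be chosen to avoid $\omega$. For minimality, suppose ${\cal H}'\in\Dom(n-1,\Omega)$ with ${\cal H}'\leqslant\DD(G)$; I want ${\cal H}'=\DD(G)$. Since ${\cal H}'$ is a domination hypergraph, write ${\cal H}'=\DD(G')$. Applying Lemma~\ref{lem.hipentorns} to ${\cal U}_{n-1,\Omega}\leqslant\DD(G')\leqslant\DD(G)$ and using Lemma~\ref{entorn.prop} gives ${\cal N}[G]\leqslant{\cal N}[G']\leqslant tr({\cal U}_{n-1,\Omega})={\cal U}_{2,\Omega}$. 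Since $G$ is a disjoint union of stars, Lemma~\ref{lem.entornestrelles} yields ${\cal N}[G]=E(G)$, a subset of ${\cal U}_{2,\Omega}$; I would then argue that ${\cal N}[G']$, squeezed between $E(G)$ and ${\cal U}_{2,\Omega}$ in the $\leqslant$ order, forces $G'$ to have the same components as $G$ — more precisely, ${\cal N}[G']\leqslant{\cal U}_{2,\Omega}$ means every minimal closed neighborhood of $G'$ has size $\le 2$, so $G'$ is again a disjoint union of stars (plus possibly isolated vertices, which are excluded since $\DD(G')$ has ground set $\Omega$), and then ${\cal N}[G]\leqslant{\cal N}[G']=E(G')\leqslant E(G)$ forces $E(G')=E(G)$, hence $G'=G$.

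For the reverse inclusion $\DDom(n-1,\Omega)\subseteq\{\DD(G):G\text{ a disjoint union of stars}\}$, I take an arbitrary minimal domination completion ${\cal H}=\DD(G)$ of ${\cal U}_{n-1,\Omega}$ and show $G$ must be a disjoint union of stars. By Lemma~\ref{prop.subgrafstars}, $G$ contains a spanning subgraph $G''$ that is a disjoint union of stars (note $G$ has no isolated vertices since $\DD(G)$ has ground set $\Omega$, via Lemma~\ref{lema.unio}). By Lemma~\ref{lem.spanningmenor}, $\DD(G'')\leqslant\DD(G)$, and by the first half of the proof $\DD(G'')\in\Dom(n-1,\Omega)$; since ${\cal H}=\DD(G)$ is minimal in this poset, $\DD(G'')=\DD(G)$. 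Then applying the uniqueness-type argument from the minimality step above (with the roles reversed), or directly comparing ${\cal N}[G'']=E(G'')$ with ${\cal N}[G]$ via Lemma~\ref{entorn.prop}, I conclude $G=G''$ is a disjoint union of stars.

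The main obstacle I anticipate is the squeezing argument showing that a graph $G'$ with $E(G)\leqslant{\cal N}[G']\leqslant{\cal U}_{2,\Omega}$ (in the $\leqslant$ sense) must actually satisfy $G'=G$ — the subtlety is that $\leqslant$ is coarser than inclusion, so ${\cal N}[G']\leqslant{\cal U}_{2,\Omega}$ does not immediately say every closed neighborhood has size $\le 2$; rather it says every minimal closed neighborhood \emph{contains} some $2$-set, i.e.\ has size $\le 2$, which does give what I want, but one must be careful that a minimal closed neighborhood cannot be a singleton here (that would correspond to an isolated vertex or a vertex dominating everything, impossible for $n\ge 3$ with ground set $\Omega$ unless... actually a singleton $\{x\}\in{\cal N}[G']$ would mean $x$ is isolated; excluded). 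Handling the possibility that $G'$ has strictly fewer edges than $G$ but the same $\DD$, and ruling it out, is where the combinatorics of Lemma~\ref{lem.domuniostars} and the involutivity of $tr$ must be used most carefully; I would likely isolate this as a short separate claim: \emph{if $G$ is a disjoint union of stars on $\Omega$ and $\DD(G')=\DD(G)$ with $V(G')=\Omega$, then $G'=G$}, proved by showing ${\cal N}[G']=tr(\DD(G'))=tr(\DD(G))={\cal N}[G]=E(G)$ and then reconstructing the edge set from the minimal closed neighborhoods.
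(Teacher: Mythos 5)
Your membership argument and your reverse inclusion are sound and essentially the paper's: a disjoint union of stars $G$ has $V_0(G)=\emptyset$, so by Lemma~\ref{lema.unio} every $(n-1)$-set contains some $D\in\DD(G)$; and any minimal completion ${\cal H}=\DD(G)$ equals $\DD(G'')$ for a spanning star forest $G''$ of a realization, via Lemmas~\ref{prop.subgrafstars} and~\ref{lem.spanningmenor} and minimality (you do not need $G=G''$ there, and in general it fails). The genuine gap is your minimality step. From ${\cal U}_{n-1,\Omega}\leqslant\DD(G')\leqslant\DD(G)$ you correctly get ${\cal N}[G]\leqslant{\cal N}[G']\leqslant tr({\cal U}_{n-1,\Omega})={\cal U}_{2,\Omega}$, but your reading of the last inequality is backwards: by Lemma~\ref{lem.ordre}, ${\cal N}[G']\leqslant{\cal U}_{2,\Omega}$ says that every $N\in{\cal N}[G']$ \emph{contains} a $2$-set, i.e.\ $|N|\ge 2$; it only excludes isolated vertices and does not force the minimal closed neighborhoods to be edges, so it does not make $G'$ a disjoint union of stars, and Lemma~\ref{lem.entornestrelles} cannot be applied to $G'$. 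Moreover the conclusion you aim at, $G'=G$, is false: take $n=4$, $G=2K_2$ with edges $\{1,2\},\{3,4\}$ and $G'$ the path with edges $\{1,2\},\{2,3\},\{3,4\}$; then ${\cal U}_{3,\Omega}\leqslant\DD(G')=\DD(G)\leqslant\DD(G)$, yet $G'$ is not a union of stars and ${\cal N}[G']=\big\{\{1,2\},\{3,4\}\big\}\neq E(G')$ (this is exactly the phenomenon described in Proposition~\ref{prop.realitzacions}). The statement you actually need, ${\cal H}'=\DD(G)$, is true, but your squeeze does not reach it. A smaller slip: a graph realizing a hypergraph with ground set $\Omega$ may well have isolated vertices (an isolated vertex lies in every minimal dominating set, which is compatible with ground set $\Omega$); what excludes them is the condition ${\cal U}_{n-1,\Omega}\leqslant\DD(G')$, since $\Omega\setminus\{v\}$ must contain a minimal dominating set.

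The repair is to compare star forests only with star forests, which is what the paper does. Given ${\cal H}'=\DD(G')\in\Dom(n-1,\Omega)$ with ${\cal H}'\leqslant\DD(G)$, first pass to a spanning star forest $G''$ of $G'$ (possible because $G'$ has no isolated vertices), so that $\DD(G'')\leqslant{\cal H}'\leqslant\DD(G)$ with both $G''$ and $G$ disjoint unions of stars on $\Omega$. Now Lemmas~\ref{entorn.prop}, \ref{lem.hipentorns} and~\ref{lem.entornestrelles} give $E(G)={\cal N}[G]\leqslant{\cal N}[G'']=E(G'')$, and since both families are $2$-uniform this means $E(G)\subseteq E(G'')$; as adding any edge to a disjoint union of stars destroys that property, $E(G'')=E(G)$, hence $\DD(G)=\DD(G'')\leqslant{\cal H}'\leqslant\DD(G)$ and ${\cal H}'=\DD(G)$ by antisymmetry of $\leqslant$. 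With this step in place your outline coincides with the paper's proof.
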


\begin{proof} 
Let $\Sigma=\{G \, : \, G\textrm{ is a disjoint union of stars with vertex set } \Omega \}$.
First we will prove that ${\cal U}_{n-1,\Omega}\le \DD (G)$ for all $G\in \Sigma$; that is, we must demonstrate that if
$A\in {\cal U}_{n-1,\Omega}$ then there exists $D\in \DD(G)$ such that $D\subseteq A$.
So,  let $A\in {\cal U}_{n-1,\Omega}$. Then $A=\Omega\setminus \{ \omega_{0} \}$
for some $\omega_{0} \in \Omega$.
By Lemma~\ref{lema.unio}, there exists a minimal dominating set $D_{0}$ 
of $G$ not containing $\omega_{0}$. Thus, $D_{0}\subseteq \Omega\setminus \{ \omega_{0} \}$. So we can set $D=D_0$.

Now, we will prove that if ${\cal H}$ is a domination completion of ${\cal U}_{n-1,\Omega}$, then 
there exists $G\in \Sigma$ such that $\DD(G)\leqslant {\cal H}$.
So, let ${\cal H}$ be a domination completion of ${\cal U}_{n-1,\Omega}$. Then ${\cal U}_{n-1,\Omega}\leqslant {\cal H}$
and there is a graph $G_{\cal H}$ with vertex set $\Omega$
such that ${\cal H}= \DD (G_{\cal H})$.
Notice that if ${\cal U}_{n-1,\Omega}\le \DD (G_{\cal H})$, then  $G_{\cal H}$ has no isolated vertices, (because otherwise the
isolated vertex $\omega_{0}$ should be at every minimal dominating set of $G_{\cal H}$ implying
that $\Omega \setminus \{ \omega_{0} \} \in {\cal U}_{n-1,\Omega}$ does not contain any minimal dominating set of $G_{\cal H}$, which is a contradiction).
Thus, by Lemma~\ref{prop.subgrafstars}, there exists a spanning subgraph $G$ of
$G_{\cal H}$ that is a disjoint union of  stars. Since $G$ is a spanning subgraph of 
$G_{\cal H}$, by Lemma~\ref{lem.spanningmenor} it follows that 
$\DD (G)\le \DD (G_{\cal H})$. Therefore we conclude that $G\in \Sigma$ and $\DD(G)\leqslant {\cal H}$.

Finally, it remains to prove that the dominating hypergraphs of distinct disjoint union of stars with vertex set 
$\Omega$ are either equal or non-comparable. In other words, we must demonstrate that if $\DD (G)\le \DD (G')$ with $G,G'\in \Sigma$,
then $G=G'$. So, let $G,G'\in \Sigma$ with $\DD (G)\le \DD (G')$. Then,
from Lemma~\ref{entorn.prop} and Lemma~\ref{lem.hipentorns} it follows that ${\cal N}[G']={tr}(\DD (G'))\le {tr}(\DD (G))={\cal N}[G]$.
By applying Lemma~\ref{lem.entornestrelles} we get that ${\cal N}[G']=E(G')$ and ${\cal N}[G]=E(G)$. Therefore
$E(G') \le E(G)$. Hence $E(G') \subseteq E(G)$ because $E(G')$ and $E(G)$ are 2-uniform hypergraphs.
At this point observe that the addition of an edge to a graph that is a disjoint union of
stars gives rise to a graph not satisfying this property. Therefore we conclude that  $E(G)=E(G')$ and, consequently, $G=G'$.
\end{proof}

The following proposition characterizes all graphs that realize a  minimal domination completion of ${\cal U}_{n-1,\Omega}$.
After its proof we present an example of a minimal domination completion ${\cal H}_0$  of the uniform hypergraph ${\cal U}_{n-1,\Omega}$
whenever $n=8$, as well as the description of all the graph realizations of ${\cal H}_0$ (the example is illustrated
in Figure~\ref{fig.mateixos}).

\begin{figure}
\begin{center}
\includegraphics [width=0.8\textwidth]{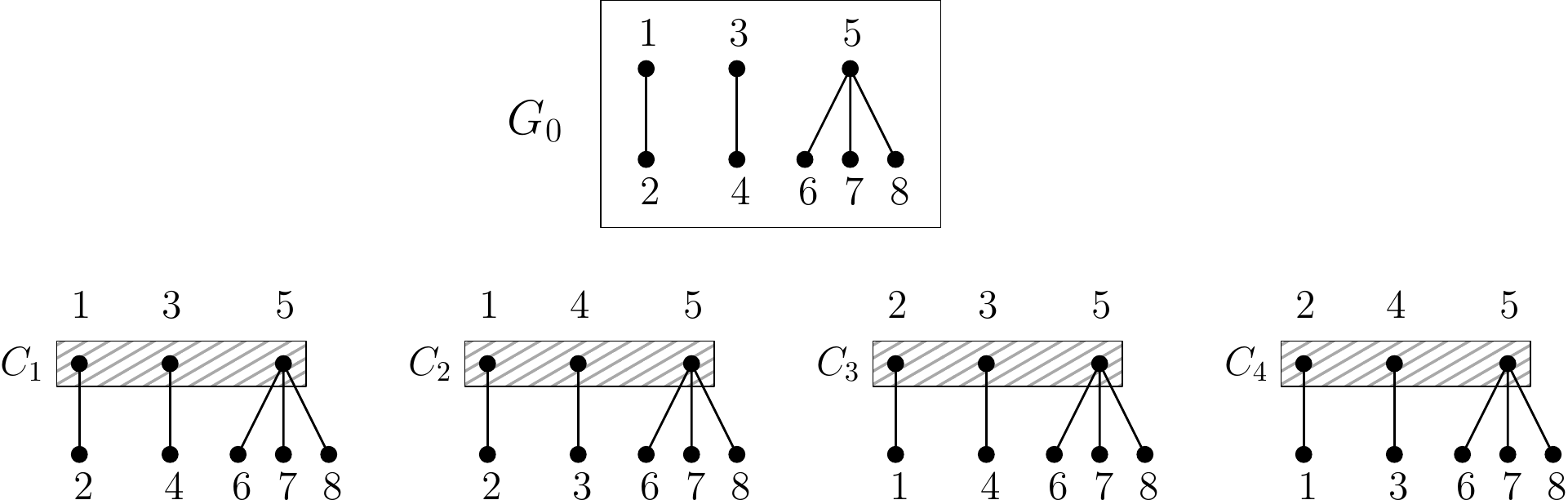}
\vspace{7mm}

\includegraphics [width=\textwidth] {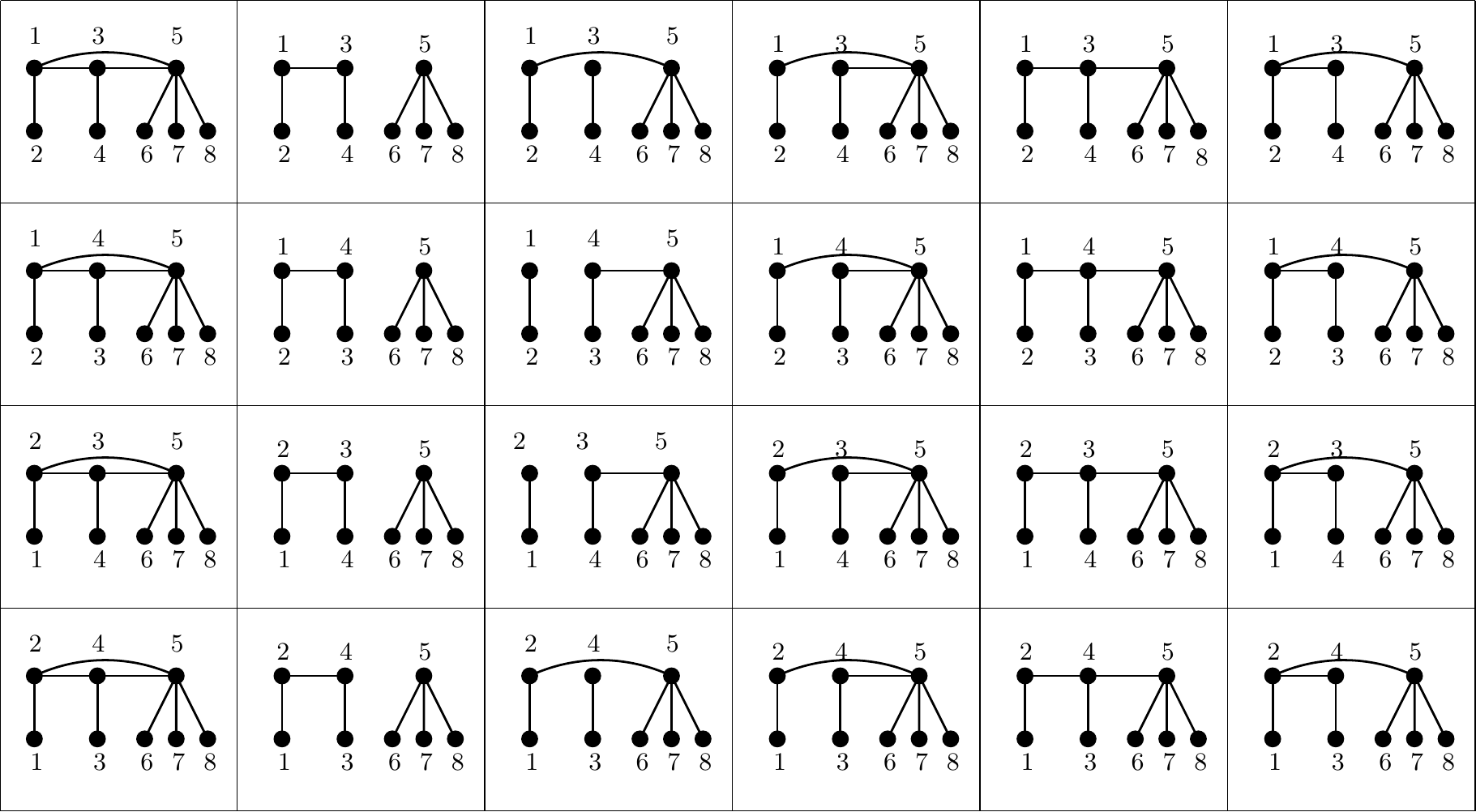}

\caption{The graph $G_0$ together with the 24 graphs obtained by adding edges joining vertices of  $C_i$ (where $i=1,2,3,4$)
are all the 25 graph realizations of the domination hypergraph ${\cal H}_0$ of Example~\ref{exemple78}
(the hypergraph  ${\cal H}_0=\DD (G_0)$ is a minimal domination completion of the uniform hypergraph
${\cal U}_{7,\Omega}$ where $\Omega =\{1,2,3,4,5,6,7,8\}$).}\label{fig.mateixos}
\end{center}
\end{figure}

\begin{proposition}\label{prop.realitzacions}
Let $G$ be a graph with vertex set $\Omega$ that is a disjoint union of stars, and let $G'$ be a graph with vertex set $\Omega$. 
Then, $\DD (G)=\DD (G')$ if and only if $G'$ is any graph that can be obtained from $G$ in the following way:
choosing a set  $C$ formed by exactly one center of every connected component of $G$ and
adding to $G$ any set of edges joining vertices of $C$.
\end{proposition}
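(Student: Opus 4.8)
The statement claims an explicit description of all graphs $G'$ with $\DD(G')=\DD(G)$ when $G$ is a disjoint union of stars on $\Omega$. The plan is to argue via the involution $tr$ and the characterization ${\cal N}[G]=E(G)$ for disjoint unions of stars (Lemma~\ref{lem.entornestrelles}). First I would prove the ``if'' direction. Let $G'$ be obtained from $G$ by choosing a set $C$ consisting of one center from each component of $G$ and adding edges among vertices of $C$. I would show directly from Lemma~\ref{lema.operacions}(1) and Lemma~\ref{lem.domuniostars} that $\DD(G')=\DD(G)$: the minimal dominating sets of $G$ are exactly the sets $\{c_j:j\in J\}\cup\bigcup_{i\notin J}L_i$; I claim each such set is still a minimal dominating set of $G'$ and conversely. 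Adding edges inside $C$ cannot destroy domination (supersets of dominating sets dominate), and it cannot create a new smaller dominating set because every vertex of $\Omega\setminus C$ (a leaf, or a non-chosen vertex of a $K_2$-component) still has its closed neighborhood unchanged in $G'$ — its only neighbor is its center, which lies in $C$ — so any dominating set of $G'$ must, for each component, either contain that component's chosen center or contain all of its leaves; this reproduces exactly the family from Lemma~\ref{lem.domuniostars}. I would also need to check that the chosen centers are still the ``centers'' in the sense that they dominate their whole component in $G'$, which holds since edges are only added, never removed.

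For the ``only if'' direction, suppose $\DD(G')=\DD(G)$. By Lemma~\ref{entorn.prop}, ${\cal N}[G']=tr(\DD(G'))=tr(\DD(G))={\cal N}[G]$, and by Lemma~\ref{lem.entornestrelles} the latter equals $E(G)$. So ${\cal N}[G']=E(G)$, meaning every inclusion-minimal closed neighborhood of $G'$ is an edge of $G$. This forces a tight structural description of $G'$. I would first argue $G'$ has no isolated vertex (an isolated vertex would give a singleton in ${\cal N}[G']$, impossible since $E(G)$ is $2$-uniform). Then for each vertex $x$, $N_{G'}[x]$ must contain some edge $\{a,b\}\in E(G)$; in particular each vertex of $G'$ has a neighbor, and I would deduce that the edges of $G$ survive as edges or are "covered" appropriately. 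The key is to show $E(G)\subseteq E(G')$ and that every extra edge of $G'$ joins two centers. For $E(G)\subseteq E(G')$: take $\{c,\ell\}\in E(G)$ with $\ell$ a leaf; since ${\cal N}[G']={\cal N}[G]=E(G)$ and $N[\ell]$ in $G$ is $\{c,\ell\}$, I must recover $\{c,\ell\}$ as some $N_{G'}[y]$, forcing $\ell$ to have degree $1$ in $G'$ with neighbor $c$ (using that $\ell$ cannot be the larger-degree endpoint). This pins down all leaves. Then the remaining freedom is precisely edges among the centers, and the constraint ${\cal N}[G']=E(G)$ (no closed neighborhood strictly smaller than those edges, and none equal to a "bad" set) forces: at most one center per component can receive extra edges, namely the chosen set $C$ — because if two centers of the same component both had extra incident edges, a minimal closed neighborhood not in $E(G)$ would appear. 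Assembling these constraints yields exactly the claimed form.

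\textbf{Main obstacle.} The delicate part is the ``only if'' direction: translating the single equation ${\cal N}[G']=E(G)$ into the precise combinatorial picture (leaves stay leaves attached to the same center; extra edges live only among one distinguished center per component). One must carefully handle $K_2$-components, where either endpoint may play the role of ``center,'' and ensure the bookkeeping of which vertex is chosen is consistent with where extra edges can attach. I expect to need a short case analysis on the degree of each endpoint of a putative edge of $G'$, repeatedly invoking that $E(G)$ is $2$-uniform and that ${\cal N}[G']$ is its set of minimal closed neighborhoods, to rule out any configuration outside the described family. The ``if'' direction, by contrast, is a routine verification via Lemma~\ref{lema.operacions} and Lemma~\ref{lem.domuniostars}.
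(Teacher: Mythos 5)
Your proposal is correct and follows essentially the same route as the paper: the crux in both is the equivalence $\DD(G')=\DD(G)\iff\mathcal{N}[G']=\mathcal{N}[G]=E(G)$ obtained from Lemma~\ref{entorn.prop} and Lemma~\ref{lem.entornestrelles}, followed by the same structural analysis (edges of $G$ survive with a degree-one endpoint in $G'$, so extra edges can only join one chosen center per component, with the $K_2$ components accounting for the freedom of choice). Your only deviation is verifying the ``if'' direction directly on dominating sets via Lemmas~\ref{lema.operacions} and~\ref{lem.domuniostars} instead of through closed neighborhoods, which is a minor variation and equally valid.
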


\begin{proof}
Let $G'$ be a graph with vertex set $\Omega$.
By applying Lemma~\ref{entorn.prop} and Lemma~\ref{lem.entornestrelles} we get that $\DD (G')=\DD (G)$ if and only if ${\cal N}[G']={\cal N}[G]$
if and only if ${\cal N}[G']=E(G)$.
It is not hard to prove that  ${\cal N}[G']=
E(G)$ if and only if the following two conditions are satisfied: $E(G)\subseteq E(G')$, and for every edge
$\{ x,y \}\in E(G)$  either $x$ or $y$ has degree 1 in $G'$.
Therefore we conclude that  $\DD (G')=\DD (G)$ if and only if $G'$ is obtained from $G$
by adding edges joining vertices of a set containing exactly one center of each star of $G$.
\end{proof}

\begin{example}\label{exemple78}
Let $\Omega =\{1,2,3,4,5,6,7,8\}$.
By Theorem~\ref{thm.nmenysu}, the minimal domination completions of ${\cal U}_{7,\Omega}$ are the hypergraphs of the form
$\DD(G)$ where  $G$ is a disjoint union of stars with vertex set $\Omega$.
It is not difficult to check that there are 5041 such graphs $G$, all of them providing different domination hypergraphs.
Therefore, $|\DDom (7,\Omega) |=5041$.
One of these graphs $G$ is the graph $G_0$ obtained as the disjoint union of 3 stars, 
two of them isomorphic to $K_2$ and the other one, isomorphic to $K_{1,3}$; namely, the graph $G_0$ with edge set
$E(G_0)=\{ \{ 1,2\}, \{3,4\}, \{5, 6 \}, \{5, 7 \}, \{5, 8 \}  \}$.
From Lemma~\ref{lem.domuniostars}, this graph $G_0$ has the following $2^3=8$ minimal dominating sets:
\begin{align*}
\DD (G_0)=\{ &\{  1,3,5 \}, \{ 1,3,6,7,8  \}, \{ 1,4,5  \}, \{  1,4,6,7,8 \}, \\
&\{  2,3,5 \}, \{ 2,3,6,7,8  \}, \{ 2,4,5  \}, \{  2,4,6,7,8 \} \}.
\end{align*}
So, the hypergraph ${\cal H}_0=\DD (G_0)$ is a minimal domination completion of ${\cal U}_{7,\Omega}$.
In order to obtain all the graph realizations of ${\cal H}_0$, we apply Proposition~\ref{prop.realitzacions}.
In this case we have four possibilities for the set $C$ containing exactly one center of each star.
Concretely $C$ is either $C_1=\{ 1,3,5 \}$, or $C_2=\{ 1,4,5 \}$, or $C_3=\{ 2,3,5 \}$, or $C_4=\{ 2,4,5 \}$.
The graphs $G'$ such that its collection of minimal dominating sets is 
$\DD(G')=\DD(G_0)$ are obtained by fixing one of the sets $C_i$ and adding edges joining vertices of  $C_i$.
It is easy to check that there are exactly 24 different graphs $G'\not= G_0$ obtained in this way (see Figure~\ref{fig.mateixos}).
\end{example}

To conclude this subsection we present an upper bound on the decomposition parameter $\mathfrak{D} (n-1,\Omega)$ of the uniform hypergraph 
${\cal U}_{n-1,\Omega}$ where $|\Omega|=n$ (Proposition~\ref{thm.dfrac.nmenysu}).
It is worth noting that an exhaustive analysis of all possible cases shows that the equality holds whenever $2\le n\le 5$. 
However, it remains an open problem to determine if the equality holds for $n\ge 6$.

\begin{proposition}~\label{thm.dfrac.nmenysu}
Let $\Omega$ be a finite set of size $n\geq 3$. Then, $\mathfrak{D} (n-1,\Omega)\le n-1$; that is,
there are $n-1$ minimal domination completions 
${\cal H}_1, \dots, {\cal H}_{n-1}$ of ${\cal U}_{n-1,\Omega}$ such that
${\cal U}_{n-1,\Omega}={\cal H}_1 \sqcap \dots \sqcap {\cal H}_{n-1}$. 
\end{proposition}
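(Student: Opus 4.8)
The goal is to exhibit $n-1$ specific disjoint-unions-of-stars whose domination hypergraphs, combined via $\sqcap$, recover ${\cal U}_{n-1,\Omega}$. By Theorem~\ref{thm.nmenysu} every disjoint union of stars with vertex set $\Omega$ gives a minimal domination completion, so the only real work is choosing $n-1$ of them and checking the meet. Writing $\Omega=\{\omega_1,\dots,\omega_n\}$, the plan is to take, for each $j\in\{1,\dots,n-1\}$, the graph $G_j$ that is the single star $K_{1,n-1}$ with center $\omega_j$ (equivalently, all edges $\{\omega_j,\omega_k\}$ with $k\neq j$). By Lemma~\ref{lem.domuniostars} this star has exactly two minimal dominating sets: $\{\omega_j\}$ and $\Omega\setminus\{\omega_j\}$. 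So ${\cal H}_j:=\DD(G_j)=\big\{\{\omega_j\},\;\Omega\setminus\{\omega_j\}\big\}$, and each ${\cal H}_j$ is a minimal domination completion of ${\cal U}_{n-1,\Omega}$ by Theorem~\ref{thm.nmenysu}.

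Next I would compute ${\cal H}_1\sqcap\dots\sqcap{\cal H}_{n-1}$ using the description ${\cal H}_1\sqcap\dots\sqcap{\cal H}_{n-1}=\min\{A_1\cup\dots\cup A_{n-1}: A_i\in{\cal H}_i\}$. A choice of $A_i\in{\cal H}_i$ for $i=1,\dots,n-1$ is specified by a subset $J\subseteq\{1,\dots,n-1\}$, where $A_i=\{\omega_i\}$ for $i\in J$ and $A_i=\Omega\setminus\{\omega_i\}$ for $i\notin J$. When $J=\{1,\dots,n-1\}$ the union is $\{\omega_1,\dots,\omega_{n-1}\}=\Omega\setminus\{\omega_n\}$, a set of size $n-1$. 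When $J\neq\{1,\dots,n-1\}$, pick $i_0\notin J$; then $A_{i_0}=\Omega\setminus\{\omega_{i_0}\}$ already has size $n-1$, and I claim the total union is all of $\Omega$: indeed $\omega_{i_0}$ itself appears in $A_\ell$ for any $\ell\neq i_0$ (either as $\{\omega_\ell\}$ with $\ell=i_0$ — impossible — or, more to the point, $\omega_{i_0}\in\Omega\setminus\{\omega_\ell\}$ whenever $A_\ell$ is the big set, and since $n-1\geq 2$ there is at least one index $\ell\neq i_0$, and if that $A_\ell$ is the singleton $\{\omega_\ell\}$ we still get $\omega_{i_0}$ because... ) — here I would just note directly that $\bigcup_i A_i\supseteq A_{i_0}\cup A_\ell$ for some $\ell\neq i_0$, and since $\omega_n\notin A_{i_0}$ only when... actually the cleanest observation: $\omega_n\in\Omega\setminus\{\omega_i\}$ for every $i\le n-1$, so if any $A_i$ is the big set then $\omega_n$ is in the union, giving $\bigcup_i A_i=\Omega$. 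Hence the unions obtained are exactly $\Omega\setminus\{\omega_n\}$ and $\Omega$, and taking $\min$ leaves only $\Omega\setminus\{\omega_n\}$. That single set has size $n-1$ but is not all of ${\cal U}_{n-1,\Omega}$, so this naive choice does \emph{not} work.

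So the real plan is to be smarter: I would instead pick the $n-1$ stars so that their meet sweeps out \emph{every} $(n-1)$-subset. For each $i\in\{1,\dots,n-1\}$ let $G_i$ be the star with center $\omega_n$ together with one extra isolated-avoiding structure — no: rather, take $G_i$ to be the disjoint union of the single edge $\{\omega_i,\omega_n\}$ and the star $K_{1,n-3}$ centered at some fixed vertex covering $\Omega\setminus\{\omega_i,\omega_n\}$, or more symmetrically, mimic the $r=2$ construction of Theorem~\ref{unidos} in dual form. The right move is: for $i=1,\dots,n-1$, let $G_i$ be the disjoint union of $K_2$ on $\{\omega_i,\omega_n\}$ and the star $K_{1,n-3}$ on $\Omega\setminus\{\omega_i,\omega_n\}$ with any chosen center. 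By Lemma~\ref{lem.domuniostars}, $\DD(G_i)$ then contains the set $(\Omega\setminus\{\omega_i,\omega_n\})\setminus\{c_i\}\cup\{\text{leaf choice}\}$ patterns; the key entries I want are: $\{\omega_i\}\cup(\text{leaves of the other star})$ has size $1+(n-3)=n-2$, and $\{\omega_n\}\cup(\text{leaves})$ likewise, plus $\{\omega_i,\text{center}_i\}$ etc. The main obstacle — and the part I expect to require genuine care — is the combinatorial bookkeeping to verify that $\min$ of all unions $A_1\cup\dots\cup A_{n-1}$ over $A_i\in\DD(G_i)$ equals precisely $\{B\subseteq\Omega:|B|=n-1\}$: I must show (i) every such union has size $\geq n-1$, and (ii) every $(n-1)$-subset $\Omega\setminus\{\omega_k\}$ arises as some union. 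Point (ii) for $k<n$ is handled by the star $G_k$ directly (it has a minimal dominating set avoiding $\omega_k$, by Lemma~\ref{lema.unio}), and for $k=n$ by combining sets from two distinct $G_i$'s; point (i) is where I would invoke ${\cal U}_{n-1,\Omega}\leqslant{\cal H}_i$ for every $i$ together with an argument that no union can have size $\le n-2$ because that would force all $A_i$ to lie inside a common $(n-2)$-set, contradicting that each $\DD(G_i)$ has ground set $\Omega$ (Lemma~\ref{lema.unio}) in a way I would make precise by a counting/pigeonhole step. I expect the delicate point to be choosing the centers of the size-$(n-3)$ stars consistently across $i=1,\dots,n-1$ so that the size lower bound in (i) is never violated; once the construction is pinned down, both (i) and (ii) reduce to short case checks.
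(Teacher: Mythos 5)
Your first construction --- the stars $K_{1,n-1}$ centered at $\omega_1,\dots,\omega_{n-1}$, so that ${\cal H}_i=\DD(S_i)=\big\{\{\omega_i\},\,\Omega\setminus\{\omega_i\}\big\}$ --- is exactly the decomposition the paper uses, and it does work; you discarded it because of a computational slip. Your claim that ``if any $A_i$ is the big set then $\bigcup_i A_i=\Omega$'' is false: if exactly one index $i_0$ receives the big set $A_{i_0}=\Omega\setminus\{\omega_{i_0}\}$ and every other $A_\ell=\{\omega_\ell\}$ with $\ell\neq i_0$, then no chosen set contains $\omega_{i_0}$, so the union is $\Omega\setminus\{\omega_{i_0}\}$, not $\Omega$ (the fact that $\omega_n$ lies in the union does not make the union all of $\Omega$). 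Hence the unions $A_1\cup\cdots\cup A_{n-1}$ realize every $(n-1)$-subset: $\Omega\setminus\{\omega_n\}$ comes from choosing all singletons, and $\Omega\setminus\{\omega_{i_0}\}$ for $i_0\le n-1$ comes from one big set together with the remaining singletons. Since moreover every such union clearly has size at least $n-1$, one gets ${\cal H}_1\sqcap\cdots\sqcap{\cal H}_{n-1}\leqslant{\cal U}_{n-1,\Omega}$ and ${\cal U}_{n-1,\Omega}\leqslant{\cal H}_1\sqcap\cdots\sqcap{\cal H}_{n-1}$, so equality holds; with Theorem~\ref{thm.nmenysu} guaranteeing that each ${\cal H}_i$ is a minimal domination completion, this is precisely the paper's proof.

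Because of that slip you switched to a second construction ($K_2$ on $\{\omega_i,\omega_n\}$ plus a star on the remaining $n-3$ vertices), and this part of the proposal is not a proof: the key verifications (i) and (ii) are only announced, the choice of the star centers is left open, and the size lower bound (i) genuinely fails for careless center choices --- for instance with $n=6$ and centers $c_1=\omega_2$, $c_2=\omega_3$, $c_3=c_4=c_5=\omega_1$, the sets $A_i=\{\omega_6,c_i\}\in\DD(G_i)$ have union $\{\omega_1,\omega_2,\omega_3,\omega_6\}$ of size $4<n-1$, so the meet would contain an element of size smaller than $n-1$ and could not equal ${\cal U}_{n-1,\Omega}$. (Also, for $n=3$ the graph $K_{1,n-3}$ degenerates to an isolated vertex, so the construction is not even a disjoint union of stars.) As written, then, the proposal has a genuine gap; reinstating your first construction, with the corrected computation above, closes it and coincides with the paper's argument.
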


\begin{proof}
Let $\Omega =\{ \omega_1,\dots ,\omega_n\}$. For $1\le i \le n-1$, let ${\cal H}_i$ be the domination hypergraph ${\cal H}_i=\DD(S_i)$ where $S_i$ is 
the star with center $\omega_i$ and isomorphic to $K_{1,n-1}$. By Theorem~\ref{thm.nmenysu}, the hypergraphs
${\cal H}_1, \dots, {\cal H}_{n-1}$ are minimal domination completions 
of ${\cal U}_{n-1,\Omega}$. Let us show that ${\cal U}_{n-1,\Omega}={\cal H}_1 \sqcap \dots \sqcap {\cal H}_{n-1}$. 
It is clear that ${\cal H}_i=\{D_{i,1},D_{i,2}\}$ where $D_{i,1}=\{ \omega_i \}$ and
$D_{i,2}= \Omega \setminus \{  \omega_i \}$. On the one hand, the elements of ${\cal H}_1 \sqcap \dots \sqcap {\cal H}_{n-1}$ have size at least $n-1$
and hence the inequality ${\cal H}_1 \sqcap \dots \sqcap {\cal H}_{n-1} \leqslant {\cal U}_{n-1,\Omega}$ holds.
On the other hand,  if $A\in {\cal U}_{n-1,\Omega}$, then  $A=\Omega \setminus \{ \omega \}$ for some $w\in \Omega$,
and thus we get that: $A=D_{1,1} \cup \cdots \cup D_{n-1,1}$ if $w=w_n$;
whereas $A=D_{i_0,2} \cup \big( \cup_{i\not= i_0} D_{i,1} \big)$ if $w=w_{i_0}\neq w_n$.
So, the inequality  ${\cal U}_{n-1,\Omega} \leqslant {\cal H}_1 \sqcap \dots \sqcap {\cal H}_{n-1}$ also holds.
Therefore, since $\leqslant$ is a partial order, we conclude that ${\cal U}_{n-1,\Omega}={\cal H}_1 \sqcap \dots \sqcap {\cal H}_{n-1}$.
\end{proof}


\subsection{Minimal domination completions of ${\cal U}_{r,\Omega}$ whith $|\Omega|\leq 5$}\label{section135}


The aim of this subsection is to determine the set 
$\DDom (r,\Omega)$ of the minimal domination completions 
of the uniform hypergraph ${\cal U}_{r,\Omega}$ where $\Omega$ is a finite set of size $|\Omega|=n \nobreak \leq \nobreak 5$ and $1\leq r \leq n$.
From Proposition~\ref{domuniform.caract} and Theorem~\ref{thcompletacio} we get that $\DDom (r,\Omega)=\{{\cal U}_{r,\Omega}\}$ if and only if 
$(r,n)\neq(2,3),(2,5),(3,4),(3,5),(4,5)$. In addition, the results of the the preceding subsections provide a complete description of the set $\DDom (r,\Omega)$
whenever $(r,n)=(2,3),(2,5),(3,4),(4,5)$. Therefore, it only remains to determine the set of minimal domination completions $\DDom (r,\Omega)$
whenever $(r,n)=(3,5)$. 

This subsection deals with this issue. Namely, the goal of this subsection is to prove that,
for $r=3$ and $n=5$ the uniform hypergraph ${\cal U}_{r,\Omega}$ has $22$ minimal domination completions: 12 of the form $\DD(G)$ with $G$ isomorphic
to a cycle $C_5$, and 10 of the form $\DD(G)$ with $G$ isomorphic
to the complete bipartite graph $K_{2,3}$. This result is stated in Theorem~\ref{thm.ComplMiniU35}. 
This and all the other results about the minimal domination completions of the uniform hypergraphs  ${\cal U}_{r,\Omega}$ where  $1\leq r \leq |\Omega |\le 5$,
are summarized in Figure~\ref{fig.taulaU35}
(in each case, the graphs $G$ in the figure
provide the realization of all the minimal domination completions ${\cal H}$ of ${\cal U}_{r,\Omega}$; namely,
${\cal H}\in \DDom(r,\Omega)$ if and only if ${\cal H}= \DD(G')$ for some graph  $G'$ isomorphic to a graph $G$  in the figure).
\begin{figure}
\begin{center}
\includegraphics [width=\textwidth]{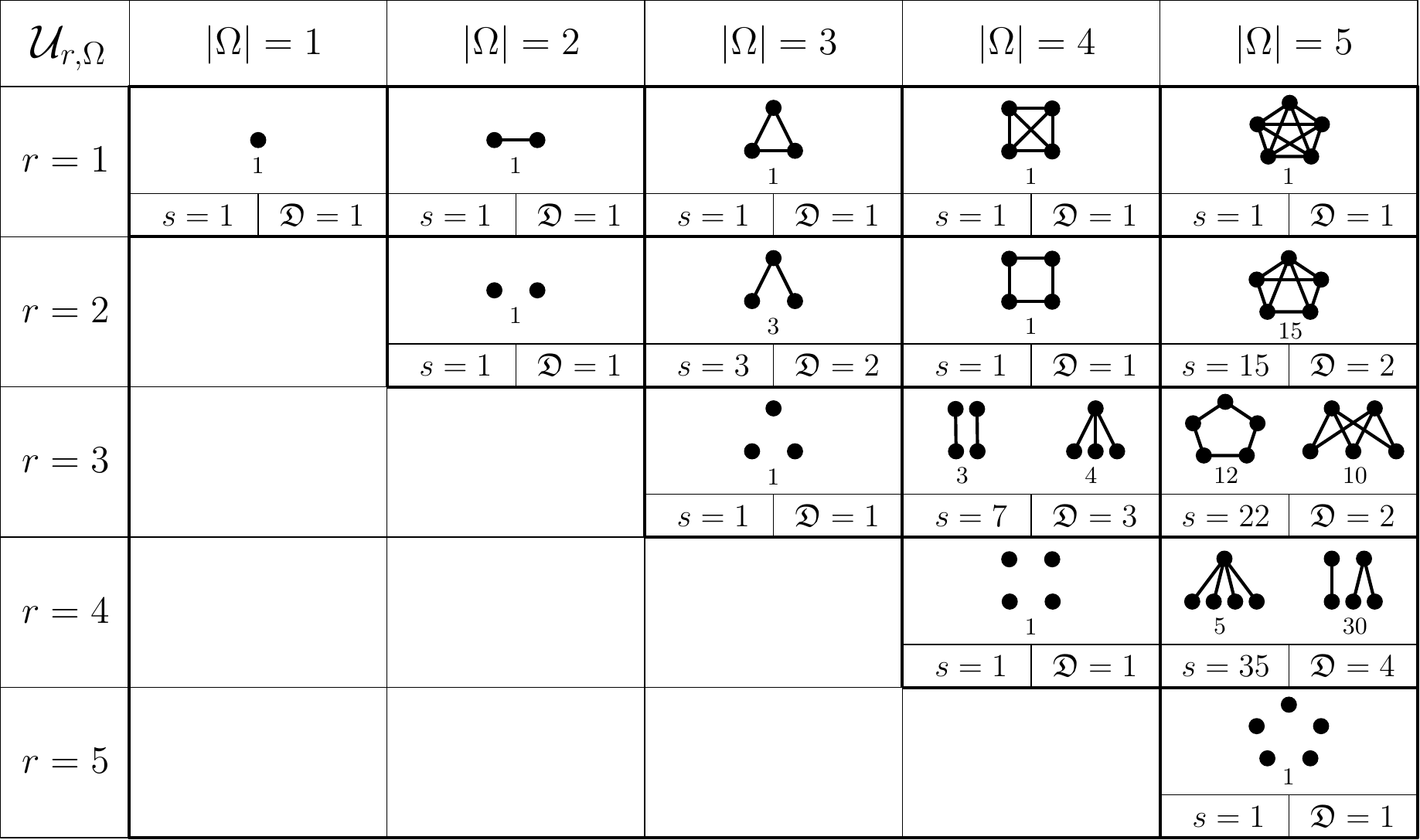}
\caption{For $1\leq n=|\Omega|\leq 5$, the set of the minimal domination completions of 
${\cal U}_{r,\Omega}$ is $\DDom(r,\Omega)=\{ \DD(G') \, : \, G'$ is a graph isomorphic to a graph $G$ in the figure$\}$. The
number below each graph $G$ denotes the number of different hypergraphs ${\cal H}\in Dom(r,\Omega)$ with ${\cal H}=\DD (G')$ for some graph with $G'$ isomorphic to $G$.
In addition, in each case,  the total number $s=|\DDom(r,\Omega)|$ of minimal domination completions of  ${\cal U}_{r,\Omega}$,
and the decomposition parameter $\mathfrak{D}=\mathfrak{D} (r,\Omega)$ are given.}
\label{fig.taulaU35}
\end{center}
\end{figure}

\begin{theorem}\label{thm.ComplMiniU35}
Let $\Omega$ be a finite set of size $|\Omega|=5$. Let ${\cal C}_5$ and  ${\cal K}_{2,3}$ be the families of 
graphs with vertex set $\Omega$, where the graphs of
${\cal C}_5$ are exactly those isomorphic to the
cycle $C_5$, whereas the graphs of ${\cal K}_{2,3}$ are all those isomorphic to the complete bipartite graph $K_{2,3}$.
The following statements hold:
\begin{enumerate}
\item The minimal domination completions of ${\cal U}_{3,\Omega}$ are the domination hypergraphs ${\cal H}$ of the form
${\cal H}=\DD (G)$ where the graph $G$ is isomorphic to either a cycle $C_5$ or to a complete bipartite graph $K_{2,3}$; that is, 
$\DDom (3,\Omega) =\{ \DD(G) : G\in {\cal C}_5 \cup {\cal K}_{2,3}\}.$
\item The uniform hypergraph ${\cal U}_{3,\Omega}$ has $22$ minimal domination completions; that is, \allowbreak $|\DDom (3,\Omega)|=22$.
\newpage
\item The uniform hypergraph ${\cal U}_{3,\Omega}$ has decomposition parameter $\mathfrak{D} (3,\Omega)=2$; that is,
there exist minimal domination completions 
${\cal H},  {\cal H}'$ of ${\cal U}_{3,\Omega}$ such that
${\cal U}_{3,\Omega}={\cal H}\sqcap {\cal H}'$. 
\end{enumerate}
\end{theorem}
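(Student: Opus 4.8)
The plan is to establish the three statements of Theorem~\ref{thm.ComplMiniU35} in sequence, leaning heavily on the machinery already developed in Section~\ref{sectionn-1n} and on the characterization via $\mathcal{N}[G]={tr}(\DD(G))$ from Lemma~\ref{entorn.prop}. For statement~(1), I would first compute $\DD(C_5)$ and $\DD(K_{2,3})$ explicitly: labelling the $5$-cycle $v_1v_2v_3v_4v_5$, its minimal dominating sets are the five pairs of vertices at distance $2$, so $\DD(C_5)={\cal U}_{2,\Omega}$-like in structure but only five of the ten pairs; a direct check then shows ${\cal U}_{3,\Omega}\leqslant \DD(C_5)$, since every $3$-subset of $\Omega$ contains at least one such dominating pair. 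For $K_{2,3}$ with parts $\{a,b\}$ and $\{x,y,z\}$, Lemma~\ref{lema.operacions}(2) (the join description, viewing $K_{2,3}=\overline{K_{\{a,b\}}}\vee\overline{K_{\{x,y,z\}}}$) gives $\DD(K_{2,3})=\{\{a,b\},\{x,y,z\}\}\cup\{\{p,q\}:p\in\{a,b\},q\in\{x,y,z\}\}$, and again one checks ${\cal U}_{3,\Omega}\leqslant\DD(K_{2,3})$ directly. So all these hypergraphs lie in $\Dom(3,\Omega)$.

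The substance of statement~(1) is the converse: every minimal domination completion ${\cal H}=\DD(G_{\cal H})$ of ${\cal U}_{3,\Omega}$ must have $G_{\cal H}$ isomorphic to $C_5$ or $K_{2,3}$, and conversely these realizations are all minimal. Here the natural tool is the necessary condition of Lemma~\ref{cond.necesari} combined with Lemma~\ref{entorn.prop}: if ${\cal U}_{3,\Omega}\leqslant\DD(G)$ then ${tr}(\DD(G))={\cal N}[G]$ must be ``small''. More usefully, I would argue that ${\cal U}_{3,\Omega}\leqslant\DD(G)$ forces constraints on the neighborhoods: no vertex of $G$ can be isolated (as in the proof of Theorem~\ref{thm.nmenysu}), and in fact no $N[x]$ can be too large, because $\DD(G)={tr}({\cal N}[G])$ must be ``at least as fine as'' the $3$-subsets. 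I expect one can show that $2\le |N[x]|\le 3$ for every vertex, i.e. $G$ has maximum degree at most $2$ or is bipartite with small parts; then a finite case analysis over all graphs on $5$ vertices with these degree constraints — disjoint unions of paths and cycles, plus $K_{2,3}$ — identifies exactly $C_5$ and $K_{2,3}$ as those whose $\DD(G)$ is a completion, with paths and shorter cycles and the $P_5$, $P_2+P_3$ etc.\ failing the test. Minimality then follows because any two such $\DD(G)$ are incomparable: as in Theorem~\ref{thm.nmenysu}, $\DD(G)\leqslant\DD(G')$ would give ${\cal N}[G']\leqslant{\cal N}[G]$ by Lemma~\ref{lem.hipentorns}, and one checks the neighborhood hypergraphs of non-isomorphic members of ${\cal C}_5\cup{\cal K}_{2,3}$ (and of $C_5$ versus $K_{2,3}$) are pairwise incomparable.

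For statement~(2), this is a counting exercise: the number of labelled $5$-cycles on a fixed $5$-set is $5!/(2\cdot 5)=12$, and the number of labelled copies of $K_{2,3}$ is $\binom{5}{2}=10$, for a total of $22$; I would also note that distinct labelled copies give distinct hypergraphs $\DD(G)$ — this needs a short check that no $C_5$ realization coincides with a $K_{2,3}$ realization and that the labelled copies within each family are distinct as hypergraphs, which again follows from reading off ${\cal N}[G]$. For statement~(3), since ${\cal U}_{3,\Omega}$ is not a domination hypergraph we have $\mathfrak{D}(3,\Omega)\ge 2$, so it suffices to exhibit two minimal completions ${\cal H},{\cal H}'$ with ${\cal H}\sqcap{\cal H}'={\cal U}_{3,\Omega}$. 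I would pick two well-chosen copies of $K_{2,3}$ (or one $C_5$ and one $K_{2,3}$) and verify, using the description ${\cal H}\sqcap{\cal H}'=\min\{A\cup A':A\in{\cal H},A'\in{\cal H}'\}$, that every union has size $\ge 3$ (so ${\cal H}\sqcap{\cal H}'\leqslant{\cal U}_{3,\Omega}$) and that every $3$-subset arises as such a union (so ${\cal U}_{3,\Omega}\leqslant{\cal H}\sqcap{\cal H}'$); as $\leqslant$ is a partial order this yields equality. The main obstacle is the converse direction of~(1): ruling out all the ``wrong'' graphs on $5$ vertices cleanly, rather than by brute enumeration, by extracting the right degree and neighborhood-size constraints from the condition ${\cal U}_{3,\Omega}\leqslant\DD(G)$.
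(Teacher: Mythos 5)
Your outline reproduces the easy parts of the argument correctly (the computations of $\DD(G)$ for $G\in{\cal C}_5\cup{\cal K}_{2,3}$, the check that ${\cal U}_{3,\Omega}\leqslant\DD(G)$, pairwise incomparability, and the count $12+10=22$), but the crucial converse of statement (1) is left to a claim that does not hold. The constraint you ``expect'' to extract from ${\cal U}_{3,\Omega}\leqslant\DD(G)$, namely $2\le|N[x]|\le 3$ for every vertex, is false for domination completions in general: the join $G=K_{\{5\}}\vee P_4$ (vertex $5$ universal, so $|N[5]|=5$) satisfies ${\cal U}_{3,\Omega}\leqslant\DD(G)=\{\{5\}\}\cup\DD(P_4)$, and such completions must be argued away by exhibiting a strictly smaller completion below them, not excluded by a degree bound; worse, the bound is inconsistent with your own target, since the two degree-$3$ vertices of $K_{2,3}$ have closed neighborhoods of size $4$. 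So the neighborhood-size restriction is really a statement about \emph{minimal} completions, and proving it is exactly the hard content you have not supplied. The paper works at the hypergraph level instead: every element of a completion of ${\cal U}_{3,\Omega}$ has size at most $3$ and every element of its transversal has size at least $3$ (Lemma~\ref{lem.sizetransversal}); an element of size $3$ forces a spanning subgraph in ${\cal K}_{2,3}$ below the completion (Lemma~\ref{lem.size3}); if all elements have size $2$ one finds a spanning $C_5$ below it (Lemma~\ref{lem.size2}); and ruling out a singleton element in a minimal completion needs the restriction Lemma~\ref{lemacon}, the description of the minimal completions of the $(n-1)$-uniform hypergraph on the four remaining vertices (Theorem~\ref{thm.nmenysu}) and Proposition~\ref{prop.realitzacions} (this is Proposition~\ref{prop.c5k23minimals}). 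Your proposed ``finite case analysis over graphs with maximum degree at most $2$ or bipartite with small parts'' has no substitute for these steps and never uses minimality, which is where the whole difficulty lies.

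There is also a concrete error in your plan for statement (3): neither of your proposed witness pairs can work. Two labelled copies of $K_{2,3}$ have $6+6>10$ edges, hence share an edge, and an adjacent pair is a minimal dominating set of $K_{2,3}$; that common pair then lies in $\DD(G_1)\sqcap\DD(G_2)$, so the meet is not ${\cal U}_{3,\Omega}$. Likewise a $C_5$ and a $K_{2,3}$ always share a common $2$-element minimal dominating set (the $5$ non-edges of the cycle and the $6$ edges of $K_{2,3}$ cannot be disjoint among the $10$ pairs). Proposition~\ref{prop.dfraku35} of the paper shows that the \emph{only} $2$-decompositions obtainable from these families are $\DD(G_1)\sqcap\DD(G_2)$ with $G_1,G_2\in{\cal C}_5$ and $E(G_1)\cup E(G_2)=E(K_\Omega)$, e.g.\ the cycles $(1,2,3,4,5)$ and $(1,3,5,2,4)$. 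So as written, both the inclusion $\DDom(3,\Omega)\subseteq\{\DD(G):G\in{\cal C}_5\cup{\cal K}_{2,3}\}$ and the witness for $\mathfrak{D}(3,\Omega)=2$ are missing or incorrect.
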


The rest of this subsection is devoted to prove this theorem. 
From now on we set $\Omega =\{ 1,2,3,4,5 \}$.

First observe that if $G\in {\cal C}_5$, then $\DD(G)$ contains the five pairs of non-adjacent vertices; 
while  if $G\in {\cal K}_{2,3}$, then $\DD (G)$ contains both stable sets and the 6 pairs of adjacent vertices (see Figure~\ref{fig.C5K23}).
Using these facts it is easy to check that if $G,G' \in {\cal C}_5 \cup {\cal K}_{2,3}$, then $\DD(G)=\DD(G')$ if and only if $G=G'$.
Therefore, $|\{ \DD(G) : G\in {\cal C}_5 \cup {\cal K}_{2,3}\}|=\allowbreak
|{\cal C}_5|+|{\cal K}_{2,3}|=12+10=22$. Thus, the statement (2) of the theorem follows from the first one.
\begin{figure}[h]
\begin{center}
\includegraphics [width=\textwidth]{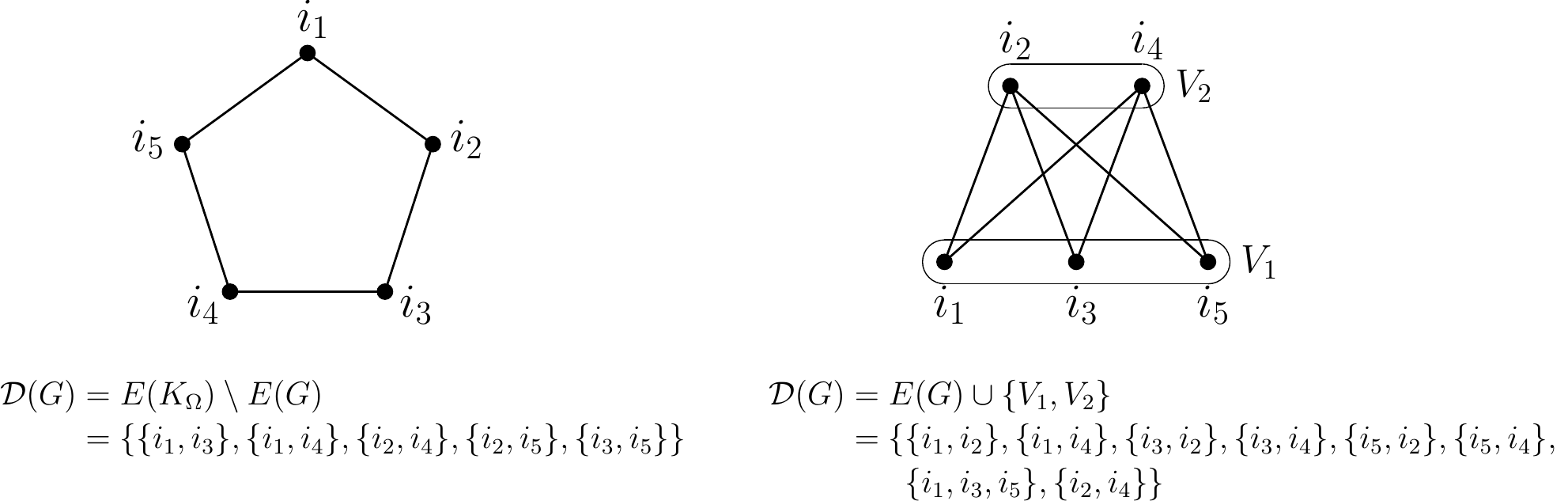}
\caption{Minimal dominating sets of a graph $G$ with vertex set $V(G)=\{ i_1,i_2,i_3,i_4,i_5\}$,
isomorphic to $C_5$ (left) and isomorphic to $K_{2,3}$ (right).}\label{fig.C5K23}
\end{center}
\end{figure}

Now let us demonstrate the third statement of the theorem. The statement (1) will be proved after doing this.

Recall that ${\cal U}_{3,\Omega}$ is not a domination hypergraph (Proposition~\ref{domuniform.caract}). So,
$\mathfrak{D} (3,\Omega)\geq 2$. The inequality $\mathfrak{D} (3,\Omega)\leq 2$ follows from Proposition~\ref{prop.dfraku35}. This proposition
shows all the ways to obtain ${\cal U}_{3,\Omega}$ as $\DD (G_1)\sqcap \DD (G_2)$, when $G_1,G_2\in{\cal C}_5 \cup {\cal K}_{2,3}$.

\begin{proposition}\label{prop.dfraku35}
Let $G_1,G_2\in {\cal C}_5 \cup {\cal K}_{2,3}$. Then,
${\cal U}_{3,\Omega}=\DD (G_1) \sqcap \DD (G_2)$ if and only if $G_1,G_2\in {\cal C}_5$  and $E(G_1)\cup E(G_2)=E (K_{\Omega})$.
\end{proposition}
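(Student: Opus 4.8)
The plan is to analyze the operation $\sqcap$ directly on the explicit descriptions of $\DD(G)$ for $G\in{\cal C}_5\cup{\cal K}_{2,3}$ that were recalled just before the proposition (and displayed in Figure~\ref{fig.C5K23}). Recall that for $G\in{\cal C}_5$ we have $\DD(G)$ equal to the five $2$-subsets of non-adjacent vertices (i.e.\ the five edges of the complementary $C_5$), while for $G\in{\cal K}_{2,3}$ we have $\DD(G)$ equal to the two stable sets (one of size $2$, one of size $3$) together with the six adjacent pairs. Using the reformulation ${\cal H}_1\sqcap{\cal H}_2=\min\{A_1\cup A_2 : A_i\in{\cal H}_i\}$, the statement ${\cal U}_{3,\Omega}=\DD(G_1)\sqcap\DD(G_2)$ means precisely that every union $A_1\cup A_2$ has size $\ge 3$ (so that the minimal ones have size exactly $3$, giving ${\cal H}_1\sqcap{\cal H}_2\leqslant{\cal U}_{3,\Omega}$), and that every $3$-subset of $\Omega$ arises as (or contains) such a union (giving ${\cal U}_{3,\Omega}\leqslant{\cal H}_1\sqcap{\cal H}_2$). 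So I would reduce the whole proposition to these two combinatorial conditions on the pair $(G_1,G_2)$.

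For the ``if'' direction, suppose $G_1,G_2\in{\cal C}_5$ with $E(G_1)\cup E(G_2)=E(K_\Omega)$. Then $\DD(G_1)$ and $\DD(G_2)$ consist only of $2$-subsets, and $A_1\cup A_2$ has size $\le 2$ only if $A_1=A_2$; but $A_1$ is a non-edge of $G_1$, hence an edge of $G_2$ (as $E(G_1)$ is the complement of $E(G_2)$ inside $E(K_\Omega)$ — here one uses that each $C_5$ is self-complementary, so $\overline{G_1}\cong C_5$ and $E(\overline{G_1})=E(G_2)$), so $A_1$ cannot also be a non-edge of $G_2$; thus all unions have size $3$, giving one inclusion. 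For the reverse inclusion I would take an arbitrary $T=\{a,b,c\}$ and produce $A_1\subseteq T$ a non-edge of $G_1$ and $A_2\subseteq T$ a non-edge of $G_2$ with $A_1\cup A_2=T$: since $T$ has three $2$-subsets and $G_1$ restricted to $T$ has at most $2$ edges (it is an induced subgraph of $C_5$ on $3$ vertices, hence a path, single edge, or empty graph), $T$ contains a non-edge of $G_1$, say $\{a,b\}$, and similarly $T$ contains a non-edge of $G_2$; a short case check, using $E(G_1)\cup E(G_2)=E(K_\Omega)$ so that $\{a,b\}$ being a non-edge of $G_1$ forces it to be an edge of $G_2$, shows one can pick the two non-edges so their union is all of $T$. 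I expect this to be a handful of small cases rather than anything deep.

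For the ``only if'' direction I would rule out the other three types of pairs. If some $G_i\in{\cal K}_{2,3}$, then $\DD(G_i)$ contains a size-$3$ stable set $S$; taking $A_i=S$ and $A_j$ any member of $\DD(G_j)$ contained in $S$ or meeting $S$ appropriately, one checks that $\DD(G_1)\sqcap\DD(G_2)$ then contains a set of size $\ge 3$ that is \emph{not} forced to equal a $3$-set of the right kind — more cleanly, $\DD(G_i)$ contains the size-$2$ stable set $S'$ and the six adjacent pairs, and one exhibits a $3$-subset of $\Omega$ that cannot be written as $A_1\cup A_2$, so ${\cal U}_{3,\Omega}\not\leqslant\DD(G_1)\sqcap\DD(G_2)$; alternatively one exhibits two members whose union has size $>3$ in an unavoidable way. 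If both $G_1,G_2\in{\cal C}_5$ but $E(G_1)\cup E(G_2)\neq E(K_\Omega)$, then there is a pair $\{x,y\}$ that is a non-edge of both, hence $\{x,y\}\in\DD(G_1)\cap\DD(G_2)$, so $\{x,y\}=A_1\cup A_2\in\DD(G_1)\sqcap\DD(G_2)$ has size $2<3$, and the decomposition fails. This last case is immediate; the work is concentrated in the ${\cal K}_{2,3}$ cases.

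The main obstacle I anticipate is the bookkeeping in the ${\cal K}_{2,3}$ cases: because $\DD(K_{2,3})$ is larger and less symmetric (it mixes the two stable sets with all six adjacent pairs), checking that no pairing with a $C_5$-hypergraph or with another $K_{2,3}$-hypergraph can reproduce exactly ${\cal U}_{3,\Omega}$ requires care. I would organize it by the size profile of the unions: showing that a size-$2$ union is impossible would force $\DD(G_1)$ and $\DD(G_2)$ to share a $2$-subset, and then arguing about which $2$-subsets each family contains; and showing some size-$3$ subset of $\Omega$ is unattainable as $A_1\cup A_2$ kills the remaining cases. Everything else is routine given Lemma~\ref{lema.operacions}, the displayed description of $\DD(G)$ for these graphs, and the union-formula for $\sqcap$.
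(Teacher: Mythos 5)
Your reduction of the statement to the two conditions ``no union $A_1\cup A_2$ of size at most $2$'' and ``every $3$-subset is a union'', and your treatment of the two-$\mathcal{C}_5$ cases in both directions, coincide with the paper's argument. The genuine gap is the only-if direction when some $G_i\in\mathcal{K}_{2,3}$: you leave it as the acknowledged main obstacle and only sketch strategies, and two of the strategies you propose cannot work. Since every graph of $\mathcal{C}_5\cup\mathcal{K}_{2,3}$ realizes a domination completion of $\mathcal{U}_{3,\Omega}$ (Proposition~\ref{prop.c5k23domcompl}), every $3$-subset of $\Omega$ contains some $A_1\in\DD(G_1)$ and some $A_2\in\DD(G_2)$, hence contains $A_1\cup A_2$; thus $\mathcal{U}_{3,\Omega}\leqslant\DD(G_1)\sqcap\DD(G_2)$ holds for \emph{every} pair, and your ``more cleanly'' plan of exhibiting a $3$-subset witnessing $\mathcal{U}_{3,\Omega}\not\leqslant\DD(G_1)\sqcap\DD(G_2)$ is doomed. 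Likewise, exhibiting a union of size greater than $3$ proves nothing: such unions occur even in the good two-$\mathcal{C}_5$ case (two disjoint non-edges) and are simply non-minimal. The only viable route among those you mention is the one you gesture at last, namely producing a common $2$-element member of $\DD(G_1)$ and $\DD(G_2)$, which forces $\DD(G_1)\sqcap\DD(G_2)$ to have a member of size at most $2$ and hence to differ from $\mathcal{U}_{3,\Omega}$ -- but you do not carry this out.

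The missing idea that makes those cases immediate, and is how the paper handles them, is a pigeonhole count of $2$-element members: for $G\in\mathcal{C}_5$ one has $\DD(G)=E(K_\Omega)\setminus E(G)$, the five non-adjacent pairs, while for $G\in\mathcal{K}_{2,3}$ all six adjacent pairs are minimal dominating sets, so $E(G)\subseteq\DD(G)$ with $|E(G)|=6$. Since $|E(K_\Omega)|=10$, two graphs of $\mathcal{K}_{2,3}$ share an edge ($6+6>10$), and a graph of $\mathcal{C}_5$ and one of $\mathcal{K}_{2,3}$ share a pair that is a non-edge of the former and an edge of the latter ($5+6>10$); in either case this common pair lies in $\DD(G_1)\cap\DD(G_2)$, which settles the remaining cases in two lines. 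With this count supplied (and your two-$\mathcal{C}_5$ analysis kept as is) the proof is complete; note also a small slip in your ``if'' direction: the unions there have size $3$ or $4$, not always $3$, but since every $3$-set is a union the size-$4$ unions are not minimal, so the conclusion stands.
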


\begin{proof}
First consider the case $G_1,G_2\in {\cal C}_5$  with $E(G_1)\cup E(G_2)=E (K_{\Omega})$.
In such a case 
$E(G_1)\cap E(G_2)=\emptyset$, and so we get that 
$\DD (G_1)=E(G_2)$ and $\DD (G_2)=E(G_1)$.
Thus, every set $A_1 \cup A_2$, with  $A_1\in \DD (G_1)$ and $A_2 \in \DD (G_2)$, has size 3 or 4.
It is straightforward to check that every element of ${\cal U}_{3,\Omega}$ can be obtained as $A_1 \cup A_2$ where $A_1\in \DD (G_1)$ and $A_2 \in \DD (G_2)$.
Therefore, ${\cal U}_{3,\Omega}=\DD (G_1) \sqcap \DD (G_2)$.

The proof of the proposition will be completed by showing that, in any other case, there exists $A_0\in \DD (G_1)\cap \DD (G_2)$ with $|A_0|=2$.
Indeed, if there exists $A_0 \in \DD (G_1)\cap \DD (G_2)$, then $A_0\in \DD (G_1) \sqcap \DD (G_2)$, and so 
$\DD (G_1) \sqcap \DD (G_2)$ has at least an element of size  $|A_0|$. Hence $\DD (G_1) \sqcap \DD (G_2)\not= {\cal U}_{3,\Omega}$ 
because $|A_0|=2$.

Therefore, we must demonstrate that there exists $A_0 \in \DD (G_1)\cap \DD (G_2)$ with $|A_0|=2$. We distinguish three cases:
whenever $G_1,G_2\in {\cal C}_5$  and $E(G_1)\cup E(G_2)\not= E (K_{\Omega})$;
whenever $G_1,G_2\in {\cal K}_{2,3}$; and whenever $G_1\in {\cal C}_{5}$ and $G_2\in {\cal K}_{2,3}$.
If $G_1,G_2\in {\cal C}_5$  and $E(G_1)\cup E(G_2)\not= E (K_{\Omega})$, then there exists 
$\{x,y \}\in  E (K_{\Omega})\setminus (E(G_1)\cup E(G_2))$. In this case $\DD(G_i)=E (K_{\Omega})\setminus E(G_i)$. So, we can set 
$A_0=\{x,y\}\in \DD (G_1)\cap \DD (G_2)$.
Now let us assume that $G_1,G_2\in {\cal K}_{2,3}$. Then $|E(G_1)|=|E(G_2)|=6$. So $E(G_1)\cap E (G_2)\not= \emptyset$ and thus there exists 
$\{x,y \}\in  E(G_1)\cap E (G_2)$. Since $E(G_i)\subseteq \DD(G_i)$, in this case the subset $A_0=\{x,y\}$ satisfies the required conditions.
Finally, suppose that $G_1\in {\cal C}_5$ and $G_2\in {\cal K}_{2,3}$. Then $|E(K_{\Omega})\setminus E(G_1)|$=5 and $|E(G_2)|=6$. 
So there exists $\{ x,y \}\in (E(K_{\Omega})\setminus E(G_1))\cap E (G_2)\subseteq \DD (G_1) \cap \DD (G_2)$, and thus the proof is completed by setting
$A_0=\{x,y\}$.
\end{proof}

At this point, the proof of Theorem~\ref{thm.ComplMiniU35} will be completed by proving the first statement. This statement follows as a consequence of
Propositions~\ref{prop.c5k23domcompl},~\ref{prop.c5k23nocomp} and~\ref{prop.c5k23minimals}. 
The first two propositions show that the hypergraphs $\DD (G)$, where $G\in {\cal C}_5 \cup {\cal K}_{2,3}$, are domination completions of 
${\cal U}_{3,\Omega}$, and that any pair of different such hypergraphs are non-comparable;
whereas the last proposition states that 
the minimal domination completions of  ${\cal U}_{3,\Omega}$ are hypergraphs of the form $\DD (G)$, where $G\in  {\cal C}_5 \cup {\cal K}_{2,3}$.
The proof of Proposition~\ref{prop.c5k23minimals} is involved and requires three 
technical lemmas concerning the size of the elements of the minimal domination completions of ${\cal U}_{3,\Omega}$ and their transversal
(Lemmas~\ref{lem.sizetransversal},~\ref{lem.size3} and~\ref{lem.size2}).

\begin{proposition}\label{prop.c5k23domcompl}
If $G\in {\cal C}_5 \cup {\cal K}_{2,3}$, then ${\cal U}_{3,\Omega}\le \DD (G)$.
\end{proposition}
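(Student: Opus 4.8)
To prove that $\mathcal{U}_{3,\Omega}\leqslant\mathcal{D}(G)$ for every $G\in\mathcal{C}_5\cup\mathcal{K}_{2,3}$, I will unwind the definition of the partial order via Lemma~\ref{lem.ordre}(1): it suffices to show that every $A\subseteq\Omega$ with $|A|=3$ contains some minimal dominating set of $G$. Equivalently, by Lemma~\ref{lema.unio} and Lemma~\ref{entorn.prop}, it suffices to check that no $3$-subset of $\Omega$ is disjoint from all of $\mathcal{D}(G)$, i.e.\ that the complement of every $2$-subset of $\Omega$ dominates $G$; but it is cleaner to argue directly. I would first record the two explicit descriptions of $\mathcal{D}(G)$ already displayed in the text (and in Figure~\ref{fig.C5K23}): if $G\in\mathcal{C}_5$ then $\mathcal{D}(G)$ is precisely the set of the five non-edges of $G$, i.e.\ $\mathcal{D}(G)=E(K_\Omega)\setminus E(G)$; and if $G\in\mathcal{K}_{2,3}$ with bipartition $\Omega=X\cup Y$, $|X|=2$, $|Y|=3$, then $\mathcal{D}(G)=\{X,Y\}\cup\{\{x,y\}:x\in X,\ y\in Y\}$, the two colour classes together with the six edges. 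These follow from Lemma~\ref{lema.operacions}(2) since $C_5=K_{\{v\}}\vee(\text{path})$-type decompositions are not available, so I would instead verify them directly: in $C_5$ a minimal dominating set is exactly a pair of vertices at distance $2$ (a pair of non-adjacent vertices), and in $K_{2,3}$ each colour class is a maximal independent set hence a minimal dominating set, while any single edge $\{x,y\}$ with $x\in X$, $y\in Y$ dominates everything and is minimal.

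\textbf{The case $G\in\mathcal{C}_5$.} Let $A\subseteq\Omega$ with $|A|=3$, and let $\Omega\setminus A=\{p,q\}$. Since $G$ is a $5$-cycle, among the three vertices of $A$ at least two are non-adjacent in $G$ — indeed $G[A]$ has at most $2$ edges (a path on $3$ vertices at most), so $A$ contains a non-adjacent pair $\{a,a'\}$. That pair is a minimal dominating set of $G$ by the description above, and $\{a,a'\}\subseteq A$. Hence, by Lemma~\ref{lem.ordre}(1), $\mathcal{U}_{3,\Omega}\leqslant\mathcal{D}(G)$.

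\textbf{The case $G\in\mathcal{K}_{2,3}$.} Write $\Omega=X\cup Y$ as above and let $A\subseteq\Omega$ with $|A|=3$. If $A=Y$ then $Y\in\mathcal{D}(G)$ and $Y\subseteq A$. Otherwise $A\neq Y$, so $A$ meets $X$, say $x\in A\cap X$; since $|A|=3>|X|$, also $A$ meets $Y$, say $y\in A\cap Y$. Then $\{x,y\}$ is an edge of $G$, hence $\{x,y\}\in\mathcal{D}(G)$, and $\{x,y\}\subseteq A$. Again by Lemma~\ref{lem.ordre}(1), $\mathcal{U}_{3,\Omega}\leqslant\mathcal{D}(G)$. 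This disposes of both cases.

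\textbf{Expected obstacle.} There is no serious difficulty here; the only thing requiring care is justifying the two explicit formulas for $\mathcal{D}(G)$ when $G\in\mathcal{C}_5$ and when $G\in\mathcal{K}_{2,3}$, which the paper has already exhibited in Figure~\ref{fig.C5K23}. If one wanted to avoid a picture-based appeal, the $C_5$ computation is the more fiddly of the two: one must check that a pair of non-adjacent vertices dominates (each of the remaining three vertices is adjacent to one of the two chosen ones, which holds by inspection of the cycle), that it is minimal (removing either vertex leaves its unique private neighbour undominated), and that no single vertex and no adjacent pair is a minimal dominating set (a single vertex misses the two vertices opposite it; an edge $\{u,v\}$ fails to be minimal since $\{u\}$ already... no — rather, an edge is dominating but not minimal because one endpoint can be swapped, so it is not \emph{inclusion-minimal} only if a proper subset dominates, which it does not, so in fact edges are \emph{not} minimal dominating sets of $C_5$ because they are dominating but a $2$-set of non-adjacent vertices is also size $2$; the point is that edges simply are not inclusion-minimal dominating — wait, they are size-$2$ and dominating, hence one must check no singleton dominates, which is true, so edges \emph{would} be minimal). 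The correct statement, and the one I will verify, is that in $C_5$ the inclusion-minimal dominating sets are exactly the size-$2$ sets, of which the dominating ones are precisely the non-adjacent pairs; adjacent pairs in $C_5$ fail to dominate (the vertex antipodal to the edge is left out). This verification is routine and self-contained.
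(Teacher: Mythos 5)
Your proof is correct and follows essentially the same argument as the paper: for $G\in{\cal C}_5$ every $3$-set contains a non-adjacent pair, which is a minimal dominating set, and for $G\in{\cal K}_{2,3}$ every $3$-set is either the stable set of size $3$ or contains an edge, both of which are minimal dominating sets; the paper simply states the structure of $\DD(G)$ (via Figure~\ref{fig.C5K23}) where you verify it explicitly. The momentary hesitation in your last paragraph about adjacent pairs in $C_5$ is resolved correctly — they fail to dominate the antipodal vertex — so nothing is missing.
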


\begin{proof}
From Lemma~\ref{lem.ordre}, we must demonstrate that if $A$ is a subset of $\Omega$ of size 3, then there exists $D\in \DD (G)$ such that $D\subseteq A$.
This is clear if $G\in  {\cal C}_5$, because in such a case
every set of three vertices of $G$  contains a pair of two non-adjacent vertices,
that are a minimal dominating set of $G$.
Now let assume that $G\in {\cal K}_{2,3}$. In this case the result follows by taking into account that the stable set of size 3
is a minimal dominating set of $G$, and that every other set of three vertices contains two adjacent vertices.
So any subset of size three contains a minimal dominating set of $G$.
\end{proof}

\begin{proposition}\label{prop.c5k23nocomp}
Let $G_1,G_2\in {\cal C}_5 \cup {\cal K}_{2,3}$. If $\DD (G_1)\leq \DD (G_2)$, then $\DD (G_1) = \DD (G_2)$.
\end{proposition}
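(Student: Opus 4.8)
The plan is to argue entirely in terms of the size-$2$ members of $\DD(G_1)$ and $\DD(G_2)$. First I would recall (cf.\ Figure~\ref{fig.C5K23}) that if $G\in{\cal C}_5$ then $\DD(G)$ is the set of the five pairs of non-adjacent vertices of $G$, and that these five pairs form the edge set of a $5$-cycle on $\Omega$; whereas if $G\in{\cal K}_{2,3}$ then $\DD(G)$ consists of the six edges of $G$ together with the $2$-element part $\{a,b\}$ and the $3$-element part $\{c,d,e\}$ of its bipartition. Since neither $C_5$ nor $K_{2,3}$ has a universal vertex, every member of $\DD(G)$ has size $\ge 2$, and the graph $\Gamma(G)$ on $\Omega$ whose edges are the size-$2$ members of $\DD(G)$ has $5$ edges when $G\in{\cal C}_5$ and $7$ edges when $G\in{\cal K}_{2,3}$. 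The key preliminary observation is that $G$ is recovered from $\Gamma(G)$: if $G\in{\cal C}_5$ then $\Gamma(G)$ is itself a $5$-cycle and $G$ is its complement in $K_\Omega$; if $G\in{\cal K}_{2,3}$ then the two endpoints of the extra pair $\{a,b\}$ are precisely the two vertices of degree $4$ in $\Gamma(G)$ (the remaining three having degree $2$), so $G$ is obtained from $\Gamma(G)$ by deleting the edge joining its two vertices of maximum degree.

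Next I would assume $\DD(G_1)\le\DD(G_2)$ and invoke Lemma~\ref{lem.ordre}: each $A_1\in\DD(G_1)$ contains some $A_2\in\DD(G_2)$, and when $|A_1|=2$ the inequality $|A_2|\ge 2$ forces $A_2=A_1$, so $\Gamma(G_1)\subseteq\Gamma(G_2)$ and in particular $|E(\Gamma(G_1))|\le|E(\Gamma(G_2))|$. I would then split according to the families of $G_1$ and $G_2$. If $G_1$ and $G_2$ lie in the same family, then $\Gamma(G_1)$ and $\Gamma(G_2)$ have the same number of edges, hence $\Gamma(G_1)=\Gamma(G_2)$; by the recovery statement $G_1=G_2$, so $\DD(G_1)=\DD(G_2)$. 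If $G_1\in{\cal K}_{2,3}$ and $G_2\in{\cal C}_5$, then $7=|E(\Gamma(G_1))|\le|E(\Gamma(G_2))|=5$, which is absurd, so this case cannot occur. Finally, if $G_1\in{\cal C}_5$ and $G_2\in{\cal K}_{2,3}$, then the $5$-cycle $\Gamma(G_1)$ is a subgraph of $\Gamma(G_2)$, i.e.\ of $G_2$ together with the edge $\{a,b\}$ joining the two vertices of its $2$-element part; but in that graph the three vertices outside $\{a,b\}$ have degree $2$ with both neighbours in $\{a,b\}$, so any $5$-cycle (which is necessarily spanning here) would contain three internally disjoint $a$--$b$ paths, which is impossible in a cycle. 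Hence this case cannot occur either, and in the remaining cases $\DD(G_1)=\DD(G_2)$.

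The routine ingredients are the explicit descriptions of $\DD(C_5)$ and $\DD(K_{2,3})$ (already recorded just before the proposition) and the elementary degree counts behind the recovery statement; the only delicate step is the cross case $G_1\in{\cal C}_5$, $G_2\in{\cal K}_{2,3}$, where one must rule out a $5$-cycle sitting inside $K_{2,3}$ plus one edge — the rest is bookkeeping on sizes. As an alternative better aligned with the methods used elsewhere in the paper, one can instead pass to minimal closed neighbourhoods: by Lemma~\ref{entorn.prop} and Lemma~\ref{lem.hipentorns}, $\DD(G_1)\le\DD(G_2)$ is equivalent to ${\cal N}[G_2]\le{\cal N}[G_1]$, and the families ${\cal N}[C_5]$ (five $3$-sets) and ${\cal N}[K_{2,3}]$ (two $4$-sets and three $3$-sets, the latter all containing the part $\{a,b\}$) can be compared by an entirely analogous size-and-intersection argument.
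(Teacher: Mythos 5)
Your proposal is correct and takes essentially the same route as the paper's proof: a case analysis over the four family combinations for $G_1,G_2$, driven by the observation that every size-$2$ member of $\DD(G_1)$ must itself belong to $\DD(G_2)$ (since all members have size at least $2$), with a count of size-$2$ members ruling out $G_1\in {\cal K}_{2,3}$, $G_2\in {\cal C}_5$, and a structural argument showing a $5$-cycle cannot sit inside $K_{2,3}$ plus the edge joining its $2$-element part, ruling out $G_1\in {\cal C}_5$, $G_2\in {\cal K}_{2,3}$. Your packaging via the auxiliary graph $\Gamma(G)$ and the recovery of $G$ from it, and your explicit degree argument in the last case, merely streamline steps the paper handles directly or asserts briefly.
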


\begin{proof}
First, suppose that $G_1,G_2\in {\cal C}_5$. Then the hypergraphs $\DD (G_1)$ and $\DD (G_2)$ contain both 
exactly five elements of size 2. In such a case  it is clear that if $\DD (G_1)\leq \DD (G_2)$, then  $\DD (G_1)=\DD (G_2)$.

Now assume that $G_1,G_2\in {\cal K}_{2,3}$. Then the hypergraphs $\DD (G_1)$ and $\DD (G_2)$ contain both exactly one element of size 3 and 7 elements of size 2. 
Therefore, from $\DD (G_1)\leq \DD (G_2)$ we deduce that the 7 elements of size 2 must be the same. In addition,
since there is no inclusion relation between the elements of a hypergraph, the element of size 3 must be also the same. Hence we conclude that
$\DD (G_1)=\DD (G_2)$.

The proof will be completed by showing that the inequality $\DD (G_1)\leq \DD (G_2)$ is not possible neither in the case $G_1\in  {\cal K}_{2,3}$ and $G_2\in {\cal C}_5$,
nor in the case $G_1\in {\cal C}_5$ and  $G_2\in {\cal K}_{2,3}$.
If $G_1\in  {\cal K}_{2,3}$ and $G_2\in {\cal C}_5$, then $\DD(G_1)$ contains 6 elements of size 2 while $\DD (G_2)$ contains only 5 elements of size 2.
Thus, in such a case, the inequality $\DD (G_1)\leq \DD (G_2)$ is not possible.
Finally, assume $G_1\in {\cal C}_5$ and  $G_2\in {\cal K}_{2,3}$. 
If  $\DD (G_1)\leq \DD (G_2)$,  then the 5 elements of size 2 of $\DD (G_1)$ must be in $\DD (G_2)$; that is, 
$\DD (G_1)\subseteq \DD (G_2)$. But the five elements of $\DD (G_1)$ correspond to 5 
pairs of non-adjacent vertices of a cycle of order 5, that induce also a cycle of order 5. Nevertheless, there is not possible to induce a cycle 
of order 5 with 5 elements of size 2 of $\DD (G_2)$. Therefore, $\DD (G_1)\leq \DD (G_2)$ is not possible in that case. This completes the
proof of the proposition.
\end{proof}

\begin{lemma}\label{lem.sizetransversal}
Let ${\cal H}$ be a hypergraph. If ${\cal U}_{3,\Omega}\leqslant {\cal H}$, 
then $|X|\ge 3$ for every $X\in {tr} ({\cal H})$.
\end{lemma}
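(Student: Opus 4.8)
I want to show that if ${\cal U}_{3,\Omega} \leqslant {\cal H}$ then every transversal element $X \in tr({\cal H})$ has $|X| \geq 3$. The natural route is to argue by contradiction: suppose some $X \in tr({\cal H})$ has $|X| \leq 2$, and derive a violation of the hypothesis ${\cal U}_{3,\Omega} \leqslant {\cal H}$. Here $\Omega$ has size $5$, so $|X| = 1$ or $|X| = 2$.

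First I would record the combinatorial meaning of the hypothesis via Lemma~\ref{lem.ordre}: ${\cal U}_{3,\Omega} \leqslant {\cal H}$ says exactly that every $3$-subset of $\Omega$ contains some member of ${\cal H}$; equivalently, every member of ${\cal H}$ has size $\leq 3$, and moreover ${\cal H}$ ``hits'' all $3$-subsets from below. Next I would use the defining property of the transversal: $X \in tr({\cal H})$ means $X$ meets every $A \in {\cal H}$, and $X$ is inclusion-minimal with this property. The key consequence I would extract is that $\Omega \setminus X$ contains \emph{no} member of ${\cal H}$ — because any $A \in {\cal H}$ with $A \subseteq \Omega \setminus X$ would be disjoint from $X$, contradicting that $X$ is a transversal element.

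Now the contradiction: if $|X| \leq 2$, then $|\Omega \setminus X| \geq 3$, so I can pick a $3$-subset $A^* \subseteq \Omega \setminus X$. By the hypothesis (via Lemma~\ref{lem.ordre}), there exists $A \in {\cal H}$ with $A \subseteq A^* \subseteq \Omega \setminus X$. But then $A \cap X = \emptyset$, contradicting $X \in tr({\cal H})$. Hence $|X| \geq 3$, as claimed. (Implicitly this also uses that ${\cal U}_{3,\Omega} \leqslant {\cal H}$ forces $tr({\cal H})$ to be nonempty and ${\cal H}$ nonempty, which is fine since ${\cal H}$ is a hypergraph.)

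**Main obstacle.** There is essentially no hard step here — the argument is a one-line duality observation once the meaning of $\leqslant$ and of $tr$ are unpacked. The only thing to be careful about is making sure that ``$A^* \subseteq \Omega \setminus X$ contains a member of ${\cal H}$'' is applied correctly: this needs $A^*$ to genuinely be a size-$3$ subset of $\Omega$, which is guaranteed precisely by $|X| \leq 2$ and $|\Omega| = 5$. I would also note that the statement and proof in fact work verbatim for ${\cal U}_{r,\Omega}$ with $|X| \geq r$ whenever $|\Omega| \geq r$, so nothing about $n = 5$ or $r = 3$ is really used beyond ensuring $\Omega \setminus X$ is large enough to contain an $r$-set.
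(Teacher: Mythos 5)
Your argument is correct and is essentially the same as the paper's: assume $|X|\le 2$ for some $X\in tr({\cal H})$, pick a $3$-subset of $\Omega$ disjoint from $X$, use ${\cal U}_{3,\Omega}\leqslant {\cal H}$ to find a member of ${\cal H}$ inside it, and contradict the transversal property. Your explicit remark that only $|\Omega\setminus X|\ge 3$ is needed is a fair observation, but it does not change the proof in any substantive way.
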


\begin{proof}
On the contrary, assume that there exists  $X\in {tr} ({\cal H})$ such that $|X|\le 2$.
In such a case, consider a subset $A\subseteq \Omega$ of size $|A|=3$ satisfying $A\cap X=\emptyset$.
Since $|A|=3$, hence $A\in {\cal U}_{3,\Omega}$. Therefore there exists $B\in {\cal H}$ contained in $A$ because ${\cal U}_{3,\Omega}\leqslant {\cal H}$.
Hence $B\cap X\subseteq A\cap X$, and so $ B\cap X=\emptyset$. This leads us to a contradiction because $B\in {\cal H}$ and $X\in {tr} ({\cal H})$.
\end{proof}

\begin{lemma}\label{lem.size3}
Let ${\cal H}$ be  a minimal domination completion of  ${\cal U}_{3,\Omega}$.
If there exists $A\in {\cal H}$ such that $|A|=3$,
then  ${\cal H}= \DD (G)$ for some  graph $G\in {\cal K}_{2,3}$.
\end{lemma}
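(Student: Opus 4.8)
The plan is to show that the minimality of $\mathcal{H}$, together with the existence of a $3$-element member, pins the graph realizing $\mathcal{H}$ down to a copy of $K_{2,3}$, after first ruling out the one near-miss by a spanning-subgraph argument.

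Since $\mathcal{H}$ is a domination completion, write $\mathcal{H}=\DD(G)$; by the ground-set condition and Lemma~\ref{lema.unio} we have $V(G)=\Omega$. First I would record that $\delta(G)\ge 2$: from $\mathcal{U}_{3,\Omega}\leqslant\mathcal{H}$ and Lemma~\ref{lem.sizetransversal} every $X\in tr(\mathcal{H})$ has $|X|\ge 3$, and by Lemma~\ref{entorn.prop} $tr(\mathcal{H})=\mathcal{N}[G]$, so every minimal closed neighborhood — hence every $N[v]$ — has at least three elements, i.e.\ $\deg_G(v)\ge2$ for all $v\in\Omega$.

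Next I would use the size-$3$ member $A=\{a,b,c\}\in\mathcal{H}=\DD(G)$, putting $\Omega\setminus A=\{d,e\}$. Because $A$ is an inclusion-minimal dominating set, each $v\in A$ has a private neighbor, namely a vertex $p$ with $N_G[p]\cap A=\{v\}$ (else $A\setminus\{v\}$ would still dominate). Picking one private neighbor for each of $a,b,c$ gives three distinct vertices of $\Omega$; since only $d,e$ lie outside $A$, at least one chosen private neighbor lies in $A$, and a private neighbor of $v$ lying in $A$ must be $v$ itself. A short pigeonhole refinement shows at least two of $a,b,c$ — say $a,b$ — are their own private neighbors (if only one were, the other two would have private neighbors forming $\{d,e\}$, and the self-private vertex would then have no neighbor at all, contradicting $\delta\ge2$). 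So $a$ and $b$ have no neighbor in $A$; with $\delta\ge2$ this forces $N_G(a)=N_G(b)=\{d,e\}$, whence $c$ too has no neighbor in $A$ and $N_G(c)=\{d,e\}$. Therefore $E(G)$ is either the edge set of $K_{2,3}$ with sides $\{d,e\}$ and $\{a,b,c\}$, or that edge set together with $\{d,e\}$. In the first case $G$ is isomorphic to $K_{2,3}$ with vertex set $\Omega$, so $G\in\mathcal{K}_{2,3}$ and $\mathcal{H}=\DD(G)$ is as claimed.

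In the remaining case I would reach a contradiction with minimality: deleting $\{d,e\}$ yields a spanning subgraph $G_0\cong K_{2,3}$ with $\delta(G_0)\ge2$, so $\DD(G_0)\in\Dom(3,\Omega)$ (Proposition~\ref{prop.c5k23domcompl}) and $\DD(G_0)\leqslant\DD(G)$ by Lemma~\ref{lem.spanningmenor}; but in this case $d$ is universal in $G$, so $\{d\}\in\DD(G)$, while every member of $\DD(G_0)$ has size at least $2$, so $\DD(G)\not\leqslant\DD(G_0)$ and the inequality is strict, contradicting the minimality of $\mathcal{H}=\DD(G)$ in $\Dom(3,\Omega)$. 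The step I expect to be the main obstacle is the private-neighbor pigeonhole argument that forces the $K_{2,3}$ shape; the rest is bookkeeping with $\leqslant$ and the structural lemmas already available. (An alternative would be to enumerate the eleven graphs on five vertices with $\delta\ge2$ directly, but the private-neighbor argument is cleaner.)
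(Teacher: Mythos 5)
Your proof is correct, and its key structural step is genuinely different from the paper's. The paper works inside the transversal: writing $A=\{1,2,3\}$, it uses Lemma~\ref{lem.sizetransversal} to show that $\{1,4,5\},\{2,4,5\},\{3,4,5\}$ lie in $tr({\cal H})={\cal N}[G_0]$ (Lemma~\ref{entorn.prop}), identifies which vertices these sets are the closed neighborhoods of, and thereby extracts the six edges of a spanning subgraph $G\cong K_{2,3}$ of the realization $G_0$; then Lemma~\ref{lem.spanningmenor}, Proposition~\ref{prop.c5k23domcompl} and minimality give $\DD(G)={\cal H}$. You use the same transversal bound only to get $\delta(G)\ge 2$, and then run a private-neighbor/pigeonhole analysis on the size-$3$ minimal dominating set itself; this pins the realization down completely (every edge meeting $\{a,b,c\}$ is one of the six $K_{2,3}$ edges, so $G$ is $K_{2,3}$ or $K_{2,3}$ plus the edge $\{d,e\}$), and the extra-edge case is excluded by the same spanning-subgraph-plus-minimality mechanism the paper uses, since $\{d\}\in\DD(G)$ while $\DD$ of the spanning $K_{2,3}$ has no singletons. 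Each route buys something: yours is more elementary on the hypergraph side (only standard domination facts about private neighbors and minimum degree) and in the main case it classifies the realization outright, needing neither Proposition~\ref{prop.c5k23domcompl} nor minimality there; the paper's argument is shorter because it never tries to determine $G_0$ itself, only a spanning $K_{2,3}$ inside it, and so avoids your case split. All auxiliary results you invoke (Lemmas~\ref{lema.unio}, \ref{entorn.prop}, \ref{lem.spanningmenor}, \ref{lem.sizetransversal} and Proposition~\ref{prop.c5k23domcompl}) precede this lemma in the paper, so there is no circularity.
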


\begin{proof}
It is enough to prove that $\DD (G) \le {\cal H}$ for some $G\in {\cal K}_{2,3}$,
because by Proposition~\ref{prop.c5k23domcompl}, ${\cal U}_{3,\Omega}\le \DD (G)$,
and so, the minimality of  ${\cal H}$ implies that $\DD (G) = {\cal H}$.

Without loss of generality we may assume that $A=\{ 1,2,3 \}\in{\mathcal H}$.
In such a case, $\{ 1,4,5 \}$, $\{ 2,4,5 \}$ and $\{ 3,4,5 \}$ are in $tr({\cal H})$,
because all these subsets have non-empty intersection with the elements of ${\cal H}$, 
and there are no elements of cardinality less or equal than 2 in $tr ({\cal H})$
(Lemma~\ref{lem.sizetransversal}).
Since ${\cal H}$ is a domination hypergraph, there exists a graph $G_0$ such that
${\cal H}=\DD (G_0)$, and so
$tr({\cal H})={\cal N}[G_0]$ (Lemma~\ref{entorn.prop}).
Therefore, $\{ 1,4,5 \}$, $\{ 2,4,5 \}$ and $\{ 3,4,5 \}$ are the closed neighborhoods for some $x,y,z\in \Omega$; that is,
$N_{G_0}[x]=\{ 1,4,5 \}$, $N_{G_0}[y]=\{ 2,4,5 \}$ and $N_{G_0}[z]=\{ 3,4,5 \}$.
Observe that at least one of the elements $x,y,z\in \Omega$ is different from $4,5$. So, 
without loss of generality we may assume that $x\neq 4,5$ and so $x=1$.
Thus $\{ 1,4,5 \}=N_{G_0}[1]$. Hence $\{1,4\},\{1,5\}\in E({G_0})$, and consequently, 
$N_{G_0}[4]\neq \{ 2,4,5 \},\{ 3,4,5 \}$ and $N_{G_0}[5]\neq \{ 2,4,5 \}, \{ 3,4,5 \}$.
So we conclude that $N_{G_0}[1]=\{ 1,4,5 \}$,  that $N_{G_0}[2]=\{ 2,4,5 \}$, and  that $N_{G_0}[3]=\{ 3,4,5 \}$. Hence it follows that 
$F=\{ \{ 1,4 \}, \{ 1,5 \}, \{ 2,4 \},\{ 2,5 \}, \{ 3,4 \} , \{ 3,5 \} \} \subseteq E({G_0}) $.
At this point let us consider the subgraph $G$ induced by the edges of $F$. 
Observe that $G$ is isomorphic to $K_{2,3}$ with stable sets $\{1,2,3\}$ and $\{ 4,5 \}$. So,
$G\in {\cal K}_{2,3}$. Moreover, the graph $G$ is a 
spanning subgraph of $G_0$ and hence, from Lemma~\ref{lem.spanningmenor} it follows that $\DD (G) \le \DD (G_0)={\cal H}$.
This completes the proof of the lemma.
\end{proof}

\begin{lemma}\label{lem.size2}
Let ${\cal H}$ be  a minimal domination completion of  ${\cal U}_{3,\Omega}$.
If $|A|=2$ for all $A\in {\cal H}$,
then  ${\cal H}= \DD (G)$ for some  graph $G\in {\cal C}_{5}$.
\end{lemma}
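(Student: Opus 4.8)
The plan is to show that if $\mathcal{H}$ is a minimal domination completion of $\mathcal{U}_{3,\Omega}$ all of whose elements have size $2$, then $\mathcal{H}$, viewed as a graph on $\Omega$, must be (isomorphic to) the cycle $C_5$. Write $G_{\mathcal H}$ for a graph realizing $\mathcal{H}=\DD(G_{\mathcal H})$ with vertex set $\Omega$, so by Lemma~\ref{entorn.prop} we have $tr(\mathcal{H})=\mathcal{N}[G_{\mathcal H}]$. By Lemma~\ref{lem.sizetransversal}, every $X\in tr(\mathcal{H})$ has $|X|\ge 3$, so every minimal closed neighborhood of $G_{\mathcal H}$ has size $\ge 3$, i.e.\ every vertex of $G_{\mathcal H}$ has degree $\ge 2$. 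First I would reinterpret $\mathcal{H}$ itself as the edge set of a graph $H$ on $\Omega$ (since all members of $\mathcal{H}$ are $2$-sets). The condition $\mathcal{U}_{3,\Omega}\leqslant\mathcal{H}$ means, via Lemma~\ref{lem.ordre}, that every $3$-subset of $\Omega$ contains an edge of $H$; equivalently, the complement $\overline{H}$ is triangle-free. On five vertices a triangle-free graph $\overline{H}$ has at most $\lfloor 25/4\rfloor = 6$ edges, so $H$ has at least $10-6=4$ edges; and $\overline{H}$ has $6$ edges exactly when $\overline{H}\cong K_{2,3}$, while $\overline{H}=C_5$ gives $|E(\overline H)|=5$.

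Next I would bring in minimality. We know $\mathcal{H}=\DD(H')$ is a domination completion for the graph $H'$ whose edge set is $\mathcal{H}$ only if $\mathcal{H}=\DD(G_{\mathcal H})$ for some graph $G_{\mathcal H}$; but since all members of $\mathcal{H}$ have size $2$, I claim $\mathcal{H}$ is forced to be $\DD(H)$ where $H$ is the graph with edge set $\mathcal{H}$ and no isolated vertices. Indeed, if $G_{\mathcal H}$ realizes $\mathcal H$ and $\mathcal H$ contains only $2$-sets, then $G_{\mathcal H}$ has no universal vertex (a singleton would be a minimal dominating set) and every minimal dominating set has size $2$; one checks (using Lemma~\ref{lema.unio} and that $V(G_{\mathcal H})=\Omega$) that the pairs that are minimal dominating sets of $G_{\mathcal H}$ are exactly the pairs of non-adjacent vertices whose union of neighborhoods is all of $\Omega$, and that $tr(\mathcal H)=\mathcal N[G_{\mathcal H}]$ has all members of size $\ge 3$ forces $\delta(G_{\mathcal H})\ge 2$. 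Now use Lemma~\ref{prop.subgrafstars}: since $G_{\mathcal H}$ has no isolated vertex, it has a spanning subgraph $S$ that is a disjoint union of stars, and by Lemma~\ref{lem.spanningmenor}, $\DD(S)\le\DD(G_{\mathcal H})=\mathcal H$. If $S$ had a star component with a center of degree $\ge 2$, then $\DD(S)$ would contain a set of size $\ge 2$ that is a proper superset of an edge — tracing this through $\DD(S)\le\mathcal H$ and the fact that $\mathcal H$ is $2$-uniform, together with the minimality of $\mathcal H$ (so $\DD(S)=\mathcal H$ once $S\in\Sigma$-type reasoning applies), I would derive that $S$ must be a perfect matching; but a perfect matching needs $n$ even, contradicting $n=5$. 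Hence this route is refined: $S$ is a disjoint union of stars on $5$ vertices, and its $\DD(S)\le\mathcal H$ together with minimality of $\mathcal H$ forces a contradiction \emph{unless} $G_{\mathcal H}$ itself has minimum degree exactly $2$ and the star-subgraph argument pins down $H$.

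Finally I would finish by pure graph theory on five vertices. We have established: $H$ (edge set $=\mathcal H$) has $\overline H$ triangle-free, $\delta(H)\ge 2$, and — from $\mathcal H=\DD(H)$, i.e.\ from Lemma~\ref{entorn.prop} giving $\mathcal N[H]=tr(\mathcal H)$ with all members of size $\ge 3$, i.e.\ $\delta(H)\ge 2$ and moreover no vertex of $H$ has a closed neighborhood strictly containing another — that $H$ is a graph in which every minimal dominating pair is an edge-complement and the degree sequence is constrained. The only graphs on $5$ vertices with $\delta\ge 2$, triangle-free complement, and which are \emph{minimal} (not properly below another such $\DD$) among domination completions are $C_5$ and $K_{2,3}$; since the hypothesis excludes a size-$3$ member, Lemma~\ref{lem.size3} rules out the $K_{2,3}$ case (there $\DD(K_{2,3})$ contains the stable triple), leaving $H\cong C_5$, and then $\mathcal H=\DD(H)$ with $H\in\mathcal C_5$ as desired. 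I expect the main obstacle to be the step extracting, from the bare hypothesis ``$\mathcal H$ minimal and $2$-uniform,'' the fact that the realizing graph must have $\delta\ge 2$ and cannot be improved to a disjoint union of stars — one must combine Lemma~\ref{lem.sizetransversal}, Lemma~\ref{prop.subgrafstars}, Lemma~\ref{lem.spanningmenor} and the minimality of $\mathcal H$ carefully to exclude every proper spanning subgraph whose $\DD$ would still dominate $\mathcal U_{3,\Omega}$, and then run the finite case check on five vertices.
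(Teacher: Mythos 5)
Your overall strategy (show $\DD(G)\leqslant{\cal H}$ for some $G\in{\cal C}_5$ and let minimality plus Proposition~\ref{prop.c5k23domcompl} force equality, using Lemma~\ref{lem.sizetransversal} to control degrees, and the observation that ${\cal U}_{3,\Omega}\leqslant{\cal H}$ makes the complement of the ``edge graph'' of ${\cal H}$ triangle-free) starts in the right direction, but the execution has a genuine gap and a false key claim. The claim that ${\cal H}$ is forced to equal $\DD(H)$, where $H$ is the graph whose edge set is ${\cal H}$, is wrong: in the very case the lemma describes, ${\cal H}=\DD(G)$ for $G\in{\cal C}_5$ consists of the five non-adjacent pairs of $G$, i.e.\ ${\cal H}=E(\overline{G})$ with $\overline{G}\cong C_5$, and then $\DD(H)=\DD(\overline{G})=E(G)\neq{\cal H}$. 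The auxiliary assertion that minimal dominating $2$-sets are exactly non-adjacent pairs with full neighborhood union is also false in general (in $K_{2,3}$ the adjacent pairs are minimal dominating sets). Likewise the star-forest detour cannot work: for a spanning star forest $S$ of a realizing graph $G_0$ one does get $\DD(S)\leqslant{\cal H}$ by Lemmas~\ref{prop.subgrafstars} and~\ref{lem.spanningmenor}, but $\DD(S)$ is \emph{not} an element of $\Dom(3,\Omega)$ (by Theorem~\ref{thm.nmenysu} such hypergraphs complete ${\cal U}_{4,\Omega}$; e.g.\ for $S=K_2+K_{1,2}$ on five vertices some triple contains no minimal dominating set of $S$), so the minimality of ${\cal H}$ in $\Dom(3,\Omega)$ gives no contradiction and no identification of ${\cal H}$ -- your own text concedes this step stalls.

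The remaining ``finite case check on five vertices'' is exactly where the content of the lemma lies, and it is not carried out: knowing only that $\overline{H}$ is triangle-free, that $|tr({\cal H})|$-elements have size at least $3$, and that ${\cal H}$ is a minimal completion does not yet single out the $C_5$-type hypergraphs, and asserting that the only candidates are $C_5$ and $K_{2,3}$ is assuming the conclusion. The paper's proof fills this gap by working with a realization $G_0$ of ${\cal H}$ and its minimal closed neighborhoods ${\cal N}[G_0]=tr({\cal H})$ (Lemma~\ref{entorn.prop}): it shows $G_0$ has no vertex of degree $4$; that every pair not in ${\cal H}$ must be an edge of $G_0$ (via a maximal independent set containing a non-adjacent pair); that all members of ${\cal N}[G_0]$ have size at least $3$ and at least one has size exactly $3$; and then a three-case analysis on the size-$3$ closed neighborhoods (one such set, two meeting in two vertices, two meeting in one vertex) explicitly exhibits a spanning subgraph $G\subseteq G_0$ with $G\in{\cal C}_5$, so that $\DD(G)\leqslant\DD(G_0)={\cal H}$ and minimality concludes. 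To repair your proposal you would need to supply an argument of comparable strength; the triangle-free-complement count alone does not produce the spanning $C_5$.
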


\begin{proof}
Let $G_0$ be a graph with ${\cal H}=\DD(G_0)$.
Reasoning as in the proof of the previous lemma,
here it is enough to prove that $\DD (G) \le {\DD (G_0)}$ for some $G\in {\cal C}_{5}$.

To prove this inequality we will use the following four facts.
 
First, notice that $G_0$ has no vertex of degree $4$, because otherwise there would be an element in $\DD(G_0)$ of size 1.

Secondly, we claim that if $\{ a,b \}\notin {\cal H}$, then  $\{ a,b \}\in E({G}_0)$. 
Let us prove our claim. Suppose to the contrary that $a$ and $b$ are non-adjacent in $G_0$. 
In such a case, both vertices $a$ and $b$ belong to a minimal dominating set 
$D$ of $\DD (G_0)$ (for instance, we can consider a maximal independent set $D$ containing 
$a$ and $b$). But $\DD (G_0)={\cal H}$ and, by assumption,  
all the elements of ${\cal H}$ have size 2. Therefore we conclude 
that $\{ a,b \}=D\in \DD (G_0)={\cal H}$, which is a contradiction. This completes the proof of our claim.

Next,  observe that ${\cal N} [G_0]=tr (\DD (G_0))=tr ({\cal H})$ (Lemma~\ref{entorn.prop}).
So, by applying Lemma~\ref{lem.sizetransversal} it follows that all the elements of ${\cal N} [G_0]$ have at least 3 elements.

Finally, let us show that in fact ${\cal N} [G_0]$  has at least one element $X$ of size 3.
Suppose on the contrary that it is not true.
If  $ {\cal N} [G_0]=\{ \Omega \}$, then $G_0$ has at least one vertex of degree $4$, which is not possible.
So, without loss of generality we may assume that $\Omega\notin {\cal N} [G_0]$ and that $\{ 1,2,3,4 \}\in {\cal N} [G_0]$.
In such a case,  all subsets of cardinality 4 must be in ${\cal N} [G_0]$,
because otherwise there exists  $ j \in \bigcap_{N\in {\cal N}[G_0]} N $,
so $\deg_{G_0} (j)=4$, which is a contradiction.
Therefore ${\cal N} [G_0]$ contains all the subsets of cardinality 4.
But this is not possible, because there is no graph of order 5 with all the vertices of degree 3.

At this point, using the foregoing four facts, we will prove that there exists a graph $G\in {\cal C}_5$ such that $\DD (G)\le \DD (G_0)$.

We distinguish three cases:
${\cal N} [G_0]$ has exactly one element of size 3;
${\cal N} [G_0]$ has at least two elements $X$ and $Y$ of size 3 with $|X\cap Y|=2$;
and
${\cal N} [G_0]$ has at least two elements $X$ and $Y$ of size 3 with $|X\cap Y|=1$.

First suppose that ${\cal N} [G_0]$ has exactly one element of size 3.
Hence, the remaining elements of  ${\cal N} [G_0]$ have size 4.
We may assume that $\{ 1,2,3 \}\in {\cal N} [G_0]$ and $N_{G_0}[1]=\{1,2,3\}$.
In such a case, $ {\cal N} [G_0] \subseteq \{ \{1,2,3\}, \{ 1,2,4,5\}, \{ 1,3,4,5 \}, \{ 2,3,4,5 \} \} $.
Since $4,5\not \in N_{G_0}[1]$, hence $1\not \in N_{G_0}[4]$ and $1\not \in N_{G_0}[5]$. Consequently, $N_{G_0}[4]=N_{G_0}[5]=\{ 2,3,4,5 \}$.
Therefore $\{ \{ 1,2 \}, \{ 1,3 \} , \{ 2,4 \}, \{ 3,5 \}, \{ 4,5 \} \}\subseteq E(G_0)$.
So, $G_0$ contains a subgraph $G$ that is isomorphic to the cycle $C_5$ and,
by Lemma~\ref{lem.spanningmenor},
$\DD (G)\le \DD (G_0)$.

Next suppose that ${\cal N} [G_0]$ has at least two elements $X$ and $Y$ of size $|X|=|Y|=3$ with $|X\cap Y|=2$.
Without loss of generality we may assume that  $X=\{ 1,2,3 \}$ and that $Y=\{ 1,2,5 \}$.
Since $X,Y\in{\cal N} [G_0]$, and since $\{4,5\}\cap X=\emptyset$ and $\{3,4\}\cap Y= \emptyset$,
we get that $\{ 4,5 \}, \{3,4 \} \notin tr ( {\cal N} [G_0] )= \DD (G_0)$. Therefore $\{ 4,5 \}, \{3,4 \} \notin {\cal H}$ and thus, 
as we have showed before, we conclude that $\{ 4, 5 \}, \{3, 4 \} \in E(G_0)$.
Hence it follows that  $ \{ \{ 1,2,3 \}, \{ 1,2,5 \} \} = \{ N_{G_0}[1], N_{G_0}[2] \} $.
By symmetry, we may assume that  $N_{G_0}[1]=\{ 1,2,3 \}$ and  that $N_{G_0}[2]=\{ 1,2,5 \}$.
In such a case, $\{ \{ 1,2\}, \{1,3\}, \{ 2,5\}, \{4,5 \}, \{ 3, 4 \}  \}\subseteq E(G_0)$.
Hence, $G_0$ contains a subgraph $G$ that is isomorphic to the cycle $C_5$ and,
by Lemma~\ref{lem.spanningmenor},
$\DD (G)\le \DD (G_0)$.

Finally, suppose that ${\cal N} [G_0]$ has at least two elements $X$ and $Y$ of size $|X|=|Y|=3$ with $|X\cap Y|=1$.
Without loss of generality we may assume that
$X=\{ 1,2,3 \}$ and that $Y=\{ 3,4,5 \}$.
Reasoning as in the preceding case, $\{ 4, 5 \}$ and $\{ 1,2 \}$ belong to $E(G_0)$.
If $N_{G_0}[3]=\{ 1,2,3\}$, then  $\{ 3,4,5 \}$ must be either $N_{G_0}[4]$ or $N_{G_0}[5]$, 
obtaining respectively that either $\{3,4 \} \in E(G_0)$ or that $\{3,5 \} \in E(G_0)$. So, if $N_{G_0}[3]=\{ 1,2,3\}$, then we get that  
either $4\in N_{G_0}[3]$ or $5\in N_{G_0}[3]$, a contradiction.
Therefore we conclude that $N_{G_0}[3]\not= \{ 1,2,3\}$  and, by symmetry,  we get that $N_{G_0}[3]\not= \{ 3,4,5 \}$.
Hence, without loss of generality we may assume that $N_{G_0}[1]=\{ 1,2,3\}$ and that $N_{G_0}[4]=\{ 3,4,5 \}$.
At this point recall that the intersection of all the closed neighborhoods in ${\cal N} [G_0]$ 
is empty (because otherwise there would be a vertex $u$ of degree 4).
Set $Z\in {\cal N} [G_0]$ such that $3\notin Z$.
If $|Z|=3$, then either $|X\cap Z|=2$ or $|Y\cap Z|=2$, and we proceed as in the preceding case.
If $|Z|\neq3$, then $Z=\{ 1,2,4,5 \}$, and so $Z$ is either $N_{G_0}[2]$ or $N_{G_0}[5]$.
In any case, $\{ 2,5 \}\in E(G_0)$.
Therefore, $\{ \{1,2 \},\{1,3\},\{ 2,5\},\{3,4\},\{4,5\}  \}\subseteq E(G_0)$.
So, $G_0$ contains a subgraph $G$ that is isomorphic to the cycle $C_5$ and,
by Lemma~\ref{lem.spanningmenor}, $\DD (G)\le \DD (G_0)$.
\end{proof}

\begin{proposition}\label{prop.c5k23minimals}
Let ${\cal H}$ be a domination completion of  ${\cal U}_{3,\Omega}$. Then, 
there exists a graph $G\in  {\cal C}_5 \cup {\cal K}_{2,3}$ such that ${\cal U}_{3,\Omega}\le  \DD (G) \le {\cal H}$.
\end{proposition}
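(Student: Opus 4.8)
The plan is to take an arbitrary domination completion ${\cal H}$ of ${\cal U}_{3,\Omega}$ and show it dominates (in the order $\leqslant$) some $\DD(G)$ with $G\in {\cal C}_5\cup {\cal K}_{2,3}$. Since ${\cal H}$ is a domination hypergraph, write ${\cal H}=\DD(G_0)$ for some graph $G_0$ with vertex set $\Omega$. The case split is already set up by Lemmas~\ref{lem.size3} and~\ref{lem.size2}: if ${\cal H}$ contains an element of size $3$, then by Lemma~\ref{lem.size3} there is $G\in {\cal K}_{2,3}$ with $\DD(G)\le {\cal H}$, and combined with Proposition~\ref{prop.c5k23domcompl} (which gives ${\cal U}_{3,\Omega}\le \DD(G)$) we are done in that case. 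So the main content is to handle the remaining possibilities for the sizes of elements of ${\cal H}$.

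First I would argue that every $A\in {\cal H}$ satisfies $|A|\le 3$. Indeed, by Lemma~\ref{entorn.prop} we have $\DD(G_0)=tr({\cal N}[G_0])$ and, since ${\cal U}_{3,\Omega}\le {\cal H}=\DD(G_0)$, Lemma~\ref{lem.sizetransversal} gives $|X|\ge 3$ for all $X\in {\cal N}[G_0]=tr({\cal H})$; equivalently every vertex of $G_0$ has degree at least $2$. I claim this forces every minimal dominating set of $G_0$ to have size at most $3$: if $G_0$ has minimum degree $\ge 2$ on $5$ vertices, then any independent set has size at most $2$, so a maximal independent set has size $2$; and more directly, a minimal dominating set $D$ with $|D|\ge 4$ would leave at most one vertex undominated-by-$D\setminus\{v\}$ for each $v\in D$, contradicting minimality once one checks the few graphs on $5$ vertices with $\delta\ge 2$. (Concretely: if $|D|=5$ then $D=\Omega$ is not minimal since $G_0$ has no isolated vertex; if $|D|=4$, say $D=\Omega\setminus\{w\}$, then $w$ has a neighbour $u\in D$, and $D\setminus\{u\}$ still dominates $w$ and dominates all of $\Omega\setminus\{u\}$ unless some vertex's only neighbour in $D$ is $u$, which a short case analysis rules out given $\delta(G_0)\ge 2$.) Thus every $A\in {\cal H}$ has $|A|\in\{2,3\}$; moreover $|A|=1$ is impossible since that would mean a vertex of degree $4$ while also... actually $|A|=1$ just means a universal vertex, which is fine — but then ${\cal U}_{3,\Omega}\not\le \DD(G_0)$ fails? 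No: reconsider. If $\{v\}\in \DD(G_0)$ then for $A=\Omega\setminus\{v\}$ of size $4$, which contains no singleton $\{v\}$; we need some $D\in \DD(G_0)$ with $D\subseteq A$, and $v$ being universal does not preclude other minimal dominating sets avoiding $v$. I would handle this edge case by noting a universal vertex together with $\delta\ge 2$ still admits the analysis, or simply observe a size-$1$ element contradicts $|X|\ge 3$ for $X\in tr({\cal H})$ via involutivity. In any case, after this reduction, either some element has size $3$ (apply Lemma~\ref{lem.size3}) or all elements have size $2$ (apply Lemma~\ref{lem.size2}), yielding $G\in {\cal K}_{2,3}$ or $G\in {\cal C}_5$ respectively with $\DD(G)\le {\cal H}$; Proposition~\ref{prop.c5k23domcompl} supplies ${\cal U}_{3,\Omega}\le \DD(G)$, completing the proof.

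The step I expect to be the main obstacle is the reduction showing that no minimal dominating set of $G_0$ can have size $\ge 4$ — i.e., that the minimum-degree-$\ge 2$ condition on a $5$-vertex graph really does bound the size of minimal dominating sets. This is an elementary but slightly fiddly finite case analysis (the relevant graphs on $5$ vertices with $\delta\ge 2$ are $C_5$, $K_{2,3}$, the "bull-free" small graphs, $K_5$ minus a matching, etc.), and it is tempting to instead route it through the transversal: since $tr({\cal H})={\cal N}[G_0]$ has all elements of size $\ge 3$ on a $5$-element ground set, $tr({\cal H})$ is an antichain of $3$- and $4$-subsets of $\Omega$, whence its transversal ${\cal H}$ is an antichain all of whose members meet every such set, forcing $|A|\le 3$ by a direct counting argument on $5$-sets. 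I would present whichever of these two routes is cleanest; the transversal-counting one keeps everything at the level of hypergraphs and avoids enumerating graphs.
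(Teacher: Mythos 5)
There is a genuine gap, and it sits exactly where the paper's proof does its real work: elements of size $1$. Once you know $|A|\le 3$ for every $A\in{\cal H}$ (which, by the way, follows in one line from the antichain property --- if $|A|\ge 4$, any $3$-subset of $A$ must contain a member of ${\cal H}$, which would then be properly contained in $A$ --- so no degree conditions or case analysis on $5$-vertex graphs are needed), the possible sizes are $1$, $2$ and $3$, not just $2$ and $3$. Both of your suggestions for excluding size $1$ fail. In particular, the claim that a singleton in ${\cal H}$ contradicts $|X|\ge 3$ for all $X\in tr({\cal H})$ is false: take $G_0=C\vee K_{\{5\}}$, where $C$ is a $4$-cycle on $\{1,2,3,4\}$; then ${\cal H}=\DD(G_0)=\big\{\{5\}\big\}\cup{\cal U}_{2,\{1,2,3,4\}}$ is a domination completion of ${\cal U}_{3,\Omega}$ which has a member of size $1$, has no member of size $3$, and whose transversal consists of the four $4$-sets containing $5$, all of size $4$. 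So singletons really do occur among domination completions, your dichotomy ``some element of size $3$, or all elements of size $2$'' does not exhaust the cases, and ``a universal vertex together with $\delta\ge 2$ still admits the analysis'' is not an argument.

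In the paper, eliminating precisely this case is the bulk of the proof. One first replaces ${\cal H}$ by a minimal domination completion ${\cal H}_0=\DD(G_0)\le{\cal H}$ --- a step you also skip, and which is needed because Lemmas~\ref{lem.size3} and~\ref{lem.size2} are stated for \emph{minimal} completions and conclude an equality, not the inequality $\DD(G)\le{\cal H}$ you quote. Then, assuming $\{5\}\in{\cal H}_0$ and no size-$3$ member, the paper writes $G_0=G_1\vee K_{\{5\}}$, applies Lemma~\ref{lemacon} to get ${\cal U}_{3,\Omega\setminus\{5\}}\le\DD(G_1)$, invokes Theorem~\ref{thm.nmenysu} to reduce to a union of stars $G'$ isomorphic to $K_{1,3}$ or $2K_2$, and derives a contradiction using Proposition~\ref{prop.realitzacions} and a spanning cycle inside $K_{\{5\}}\vee G''$. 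Your proposal contains no substitute for this argument, so it does not go through as written; the rest of your outline (the size-$3$ case via Lemma~\ref{lem.size3}, the all-size-$2$ case via Lemma~\ref{lem.size2}, and Proposition~\ref{prop.c5k23domcompl} for ${\cal U}_{3,\Omega}\le\DD(G)$) matches the paper once the reduction to a minimal completion is inserted.
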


\begin{proof}
Let  ${\cal H}_0=\DD(G_0)$ be a minimal domination completion of ${\cal U}_{3,\Omega}$ such that  ${\cal U}_{3,\Omega}\le  {\cal H}_0 \le  {\cal H}$.
By  Lemmas~\ref{lem.size3} and \ref{lem.size2}, it is enough to show that ${\cal H}_0$ has either an element of size $3$ or all its elements have size 2.
Let us prove it.

First observe that for all $A\in {\cal H}_0$, we have $|A|\le 3$.
Indeed, suppose on the contrary that there exists $A\in {\cal H}_0$ such that $|A|\ge 4$.
If $\{ a,b,c,d \}\subseteq A$, then ${\cal H}_0$ does not contain any subset of $\{ a,b,c \}\in {\cal U}_{3,\Omega}$, 
contradicting that ${\cal U}_{3,\Omega} \leq {\cal H}_0$.

From the above, it only remains to prove that,
if  ${\cal H}_0$ has no elements of size $3$,
then all its elements have size exactly 2.
On the contrary, let us assume that there exists $A\in {\cal H}_0$ such that $|A|=1$. We are going to prove that, in such a case, 
a contradiction is achieved.

Without loss of generality we may assume that $A=\{ 5 \}$. Hence, $\deg_{G_0} (5)=4$ because $A\in {\cal H}=\DD(G_0)$.
Let 
$G_1=G_0-5$ be the graph obtained by deleting the vertex 5 from $G_0$.
It is clear that $G_0= G_1\vee K_{ \{ 5 \} }$. 
Since ${\cal U}_{3,\Omega}\le {\cal H}_0$, hence  ${\cal U}_{3,\Omega}\leqslant \DD (G_1 \vee K_{ \{ 5 \} })$,
and thus, by applying Lemma~\ref{lemacon} it follows that 
$\mathcal{U}_{3,\Omega\setminus \{ 5 \}} = \mathcal{U}_{3,\Omega} [{\Omega\setminus \{ 5 \}}]\le \DD (G_1)$.
Let $\DD (G')$ be a minimal domination completion of $\mathcal{U}_{3,\Omega\setminus \{ 5 \}}$ such that
$\mathcal{U}_{3,\Omega\setminus \{ 5 \}} \le \DD (G')\le \DD (G_1)$.
By using Lemma~\ref{lema.operacions}, it is easy to check that 
$\mathcal{U}_{3,\Omega} \le \DD (G'\vee K_{\{ 5 \} })$ and that $\DD (G'\vee K_{\{ 5 \} })\le \DD (G_1\vee K_{\{ 5 \} })$.
Therefore, $\mathcal{U}_{3,\Omega} \le \DD (G'\vee K_{\{ 5 \} })\le {\cal H}_0$.
So, ${\cal H}_0=\DD (G'\vee K_{\{ 5 \} })$ because ${\cal H}_0$ is a minimal domination completion of $\mathcal{U}_{3,\Omega}$.
By Theorem~\ref{thm.nmenysu} we may assume that $G'$ is isomorphic to $K_{1,3}$ or to $2\, K_2$.
If $G'$ is isomorphic to $K_{1,3}$,
then ${\cal H}_0=  \DD (G'\vee K_{\{ 5 \} })=\{ \{ 5 \} \} \cup \DD (G')$ has an element of size 3, which contradicts our assumption.
If $G'$ is isomorphic to $2\, K_{2}$, then by applying Proposition~\ref{prop.realitzacions}
we get that $\DD (G')=\DD (G'')$ where $G''$ is a path of order 4 obtained by joining a pair of vertices of 
the different connected components in $2 K_2$.
Therefore, ${\cal H}_0=\DD (K_{\{5\}} \vee G')= \{ \{ 5 \} \} \cup \DD (G')=\{ \{ 5 \} \} \cup \DD (G'')=\DD (K_{\{5\}} \vee G'')$. 
But the graph $K_{\{5\}} \vee G''$ contains a spanning subgraph $G'''\in {\cal C}_5$.
So, from Lemma~\ref{lem.spanningmenor} we get that
${\cal U}_{3,\Omega} \le \DD (G''')\le \DD (K_{\{5\}} \vee G'')={\cal H}_0$. 
Therefore, ${\cal H}_0=\DD (G''')$ because ${\cal H}_0$ is a minimal domination completion of ${\cal U}_{3,\Omega}$.
This leads us to a contradiction since all the dominating sets of $G'''$ have size 2.
\end{proof}



\end{document}